\theoremstyle{plain}
\newtheorem{theo}{Theorem}[section]
\newtheorem{lem}[theo]{Lemma}
\newtheorem{prop}[theo]{Proposition}
\newtheorem{cor}[theo]{Corollary}%
\theoremstyle{definition}
\newtheorem{definition}[theo]{Definition}
\theoremstyle{remark}
\newtheorem{rem}[theo]{Remark}
\numberwithin{equation}{section}
\newcommand{\C}{\mathbb{C}}
\newcommand{\R}{\mathbb{R}}
\newcommand{\N}{\mathbb{N}}
\newcommand{\M}{\mathbb{M}}
\newcommand{\divrg}{\textrm{div}\,}
\title{Sharp three sphere inequality for perturbations of a product of two second order elliptic
operators and stability for the Cauchy problem for the anisotropic plate equation
\thanks{Work supported by PRIN No. 20089PWTPS}}
\author{Antonino Morassi\thanks{Dipartimento di Georisorse e Territorio,
Universit\`a degli Studi di Udine, via Cotonificio 114, 33100
Udine, Italy. E-mail: \textsf{antonino.morassi@uniud.it}}, \  Edi
Rosset\thanks{Dipartimento di Matematica e Informatica,
Universit\`a degli Studi di Trieste, via Valerio 12/1, 34127
Trieste, Italy. E-mail: \textsf{rossedi@univ.trieste.it}} \ and
Sergio Vessella\thanks{DIMAD, Universit\`a degli Studi di Firenze,
Via Lombroso 6/17, 50134 Firenze, Italy,
\textsf{sergio.vessella@dmd.unifi.it}}}
\begin{document}

\maketitle

\begin{abstract}
We prove a sharp three sphere inequality for solutions to third
order perturbations of a product of two second order elliptic
operators with real coefficients. Then we derive various kinds of
quantitative estimates of unique continuation for the anisotropic
plate equation. Among these, we prove a stability estimate for the
Cauchy problem for such an equation and we illustrate some
applications to the size estimates of an unknown inclusion made of
different material that might be present in the plate. The paper is
self-contained and the Carleman estimate, from which the sharp three
sphere inequality is derived, is proved in an elementary and direct
way based on standard integration by parts.

\end{abstract}

\centerline{}

\section{Introduction}
 In the present paper we shall prove some
quantitative estimates of unique continuation for fourth order
elliptic equations arising in linear elasticity theory.

 The equations we are most concerned with are those
 describing the equilibrium of a thin plate having uniform
 thickness. Working in the framework of the linear elasticity for
 infinitesimal deformations and under the kinematical assumptions
 of the Kirchhoff-Love theory (see \cite{Fi}, \cite{Gu}), the transversal
 displacement $u$ of the plate satisfies the following equation

\begin{equation}
\label{1-I} \mathcal{L}u:=\sum_{i,j,k,l=1}^2\partial_{ij}^2
(C_{ijkl}(x)\partial_{kl}^2 u)=0, \quad \hbox{in } \Omega,
\end{equation}
where $\Omega$ is the middle surface of the plate and
$\{C_{ijkl}(x)\}_{i,j,k,l=1}^{2}$ is a fourth order tensor
describing the response of the material of the plate. In the
sequel we shall assume that the following standard symmetry
conditions are satisfied
\begin{equation}
    \label{2-I}
    C_{ijkl}(x)=C_{klij}(x)=C_{lkij}(x), \quad \hbox{ }{i,j,k,l=1,2},
    \quad\hbox{  in } \Omega.
\end{equation}
In addition we shall assume that $C_{ijkl}\in
C^{1,1}(\overline{\Omega})$, $i,j,k,l=1,2$, and that the following
strong convexity condition is satisfied
\begin{equation}
    \label{3-I}
    C_{ijkl}(x)A_{ij}A_{kl} \geq \gamma |A|^2,
    \quad \hbox{  in } \Omega,
\end{equation}
for every $2\times2$ symmetric matrix $A=\{A_{ij}\}_{i,j=1}^2$,
where $\gamma$ is a positive constant and
$|A|^2=\sum_{i,j=1}^2A_{ij}^2$.

More precisely, the quantitative estimates of unique continuation
which we obtain are in the form of a three sphere inequality (see
Theorem \ref{theo:9-4.3}, Theorem \ref{theo:12-4.3} and Theorem
\ref{theo:13-4.3}), in developing which we have mainly had in mind
its applications to two kinds of inverse problems for thin elastic
plates:

a) the stability issue for the inverse problem of the
determination of unknown boundaries,

b) the derivation of size estimates for unknown inclusions made of different
elastic material.\\

Let us give a brief description of problems a) and b).\\

\textit{Problem a)}. We consider a thin elastic plate, having
middle surface $\Omega$, whose boundary is made by an accessible
portion $\Gamma$ and by an unknown inaccessible portion $I$, to be
determined. Assuming that the boundary portion $I$ is free, a
possible approach to determine $I$ consists in applying a couple
field $\widehat{M}$ on $\Gamma$ and measuring the resulting transversal
displacement $u$ and its normal derivative $\frac{\partial
u}{\partial n}$ on an open subset of $\Gamma$. In \cite{M-R} it
was proved that, under suitable a priori assumptions, a single
measurement of this kind is sufficient to detect $I$. The
stability issue, which we address here, asks whether small
perturbations of the measurements produce or not small
perturbations of the unknown boundary $I$. Since assigning a
couple field $\widehat{M}$ results in prescribing the so called Neumann
conditions for the plate, that is two boundary conditions of
second and third order respectively, it follows that Cauchy data
are known in $\Gamma$. Therefore it is quite reasonable, also in
view of the literature about stability results for the
determination of unknown boundaries in other physical frameworks
(see for instance \cite{A-B-R-V}, \cite{Si}, \cite{Ve}), that the
first step to be proved in order to get such a
stability result consists in stability estimates for the Cauchy problem
for the fourth order equation \eqref{1-I}. For this reason, in the
present paper we derive a stability result for the Cauchy
problem, see Theorem \ref{theo:LSC}, having in mind applications to this
inverse problem and to the analogous ones, consisting in the
determination of cavities or rigid inclusions inside the plate. We
refer to \cite{M-R-V3} and to \cite{M-R} respectively for
uniqueness results for these two
inverse problems.\\

\textit{Problem b)}. We consider a thin elastic plate, inside which
an unknown inclusion made of different material might be present.
Denoting by $\Omega$ and $D$ the middle surface of the plate and of
the inclusion respectively, a problem of practical interest is the
evaluation of the area of $D$. In \cite{M-R-V1} we derived upper and
lower estimates of the area of $D$ in terms of boundary
measurements, for the case of isotropic material and assuming a
``fatness'' condition on the set $D$, see \cite[Theorem 4.1]{M-R-V1}.
Since the proof of that result was mainly based on a three sphere
inequality for $|\nabla^2u|^2$ (here $\nabla^2u$ denotes the Hessian matrix of $u$),
where $u$ is a solution of the plate
equation, we emphasize here that Theorem 4.1 of \cite{M-R-V1}
extends to the more general anisotropic assumptions on the
elasticity tensor stated in Theorem \ref{theo:12-4.3} of the present
paper, in which such a three sphere inequality is established.\\

Concerning the Cauchy problem, along a classical path, \cite{NIR},
recently revived in \cite{A-R-R-V} in the framework of second
order elliptic equations, we derive the stability estimates for
the Cauchy problem for equation \eqref{1-I} as a consequence of
smallness propagation estimates from an open set for solution to
\eqref{1-I}. Such smallness propagation estimates are achieved by
a standard iterative application of the three sphere inequality.

In view of the applications to problems a) and
b), we took care to study with particular attention the sharp
character of the exponents appearing in the three sphere
inequality because of its natural connection with the unique
continuation property for functions vanishing at a point with
polynomial rate of convergence (strong unique continuation property, \cite{CoGr},
\cite{Co-Gr-T}, \cite{Ge}, \cite {LeB}, \cite{L-N-W},
\cite{M-R-V1}) or with exponential rate of convergence, \cite{Co-K}, \cite{Pr}.
As a byproduct of our three sphere
inequality, we reobtain the result in \cite{Co-K}, in the case of
$C^{1,1}$ coefficients, stating that, if
$u(x)=O\left(e^{-|x-x_0|^{-\beta}}\right)$ as $x\rightarrow x_0$,
for some $x_0\in\Omega$ and for an appropriate
$\beta>0$ which is precisely defined below, then $u\equiv0$ in $\Omega$. Indeed it is not
worthless to stress that such kinds of unique continuation
properties, especially the quantitative version of the strong
unique continuation property (three sphere inequalities with
optimal exponent and doubling inequalities, in the interior and at
the boundary) have provided crucial tools to prove optimal
stability estimates for inverse problems with unknown boundaries
\cite{A-B-R-V}, \cite{Si}, \cite{Ve} and to get size estimates for
unknown inclusions, \cite{A-M-R1}, \cite{A-M-R2}, \cite{A-M-R3},
\cite{A-R-S}, \cite{M-R-V1}, \cite{M-R-V2}. Concerning
problem b), we stress that the application of doubling inequalities
allows to get size estimates of the unknown inclusion $D$ under fully
general hypotheses on $D$, which is assumed to be merely a measurable set, see
\cite{M-R-V2}.

The strong unique continuation property for equation \eqref{1-I}
holds true, \cite{CoGr}, \cite {LeB}, \cite{L-N-W},
\cite{M-R-V1}), when the tensor $\{C_{ijkl}(x)\}_{i,j,k,l=1}^{2}$
satisfies isotropy hypotheses, that is
\begin{equation}
    \label{10-I}
    C_{ijkl}(x)=\delta_{ij}\delta_{kl}\lambda(x)+\left(\delta_{ik}\delta_{jl}
    +\delta_{il}\delta_{jk}\right)\mu(x), \quad \hbox{ }{i,j,k,l=1,2},
    \quad \hbox{  in } \Omega,
\end{equation}

\noindent
where $\lambda$ and $\mu$ are the Lam\'{e} moduli.

On the other hand, in view of Alinhac Theorem \cite{Ali}, it seems
extremely improbable that the solutions to \eqref{1-I} can satisfy
the strong unique continuation property under the general
hypotheses \eqref{2-I} and \eqref{3-I} on the tensor $\{C_{ijkl}(x)\}_{i,j,k,l=1}^{2}$.
Indeed, let
$\widetilde{\mathcal{L}}=\sum_{h=0}^{4}a_{4-h}(x)\partial
_{1}^{h}\partial _{2}^{4-h}$ be the principal part of the operator
$\mathcal{L}$. Let $z_1, z_2, \overline{z}_1, \overline{z}_2$
(here $\overline{z}_j$ is the conjugate of the complex number
$z_j$) be the complex roots of the algebraic equation
$\sum_{h=0}^{4}a_{4-h}(x_0)z^{h}=0$. In \cite{Ali} it is proved
that if $z_1\neq z_2$ then there exists an operator $Q$ of order
less than four such that the strong unique continuation property
in $x_0$ doesn't hold true for the solutions to the equation
$\widetilde{\mathcal{L}}u+Qu=0$. A fortiori, it seems hopeless the
possibility that solutions to \eqref{1-I} can satisfy
the doubling inequality.

At the best of our knowledge, concerning both weak and strong
unique continuation property for equation \eqref{1-I}, under the
general assumptions \eqref{2-I}, \eqref{3-I} and some reasonable
smoothing condition on the coefficients $C_{ijkl}$, neither positive answers nor
counterexamples are available in the literature. On the other
hand, it is clear that, in order to face the issue of unique
continuation property for equation \eqref{1-I} under the above
mentioned conditions, the two-dimensional character of equation
\eqref{1-I} or the specific structure of the equation should play
a crucial role. Indeed, a Pl\u{\i}s's example, \cite{Pl},
\cite{Zu}, shows that the unique continuation property fails for
general three-dimensional fourth order elliptic equations with real
$C^\infty$ coefficients.

For the reasons we have just outlined, in the present paper we
have a bit departed from the specific equation \eqref{1-I} and we
have derived the three sphere inequality that we are interested
in, as a consequence of a three sphere inequality for solutions to
the equation
\begin{equation}
\label{4-I} P_4(u)+Q(u)=0, \quad
    \hbox{  in } B_1=\{x\in \mathbb{R}^n\ |\ |x|<1\},
\end{equation}
where $n\geq 2$, $Q$ is a third order operator with bounded
coefficients and $P_4$ is a fourth order elliptic operator such
that
\begin{equation}
\label{5-I} P_4=L_2L_1,
\end{equation}
where $L_1$ and $L_2$ are two second order uniformly elliptic
operator with real and $C^{1,1}(\overline{B_1})$ coefficients. Our
approach is also supported by the fact that the operator
$\mathcal{L}$ can be written, under very general and simple
conditions (see sections \ref{SecCauchy} and \ref{Sec4.3}), as follows
\begin{equation}
\label{6-I} \mathcal{L}=P_4+Q,
\end{equation}
where $P_4$ satisfies \eqref{5-I} and $Q$ is a third order operator
with bounded coefficients. We
have conventionally labeled such conditions (see Definition \ref{def:dichotomy} in
Section \ref{SecCauchy}) the \emph{dichotomy condition}.
On the other hand, the conditions under
which the decomposition \eqref{6-I} is possible are, up to now,
basically the same under which the unique continuation property
holds for fourth order elliptic equation in two variables
\cite{Wat}, \cite{Zu}. More precisely, such conditions guarantee the
weak unique continuation property for solution to $\mathcal{L}u=0$
provided that the complex characteristic lines of the principal part
of operator $\mathcal{L}$ satisfy some regularity hypothesis.

We prove the three sphere inequality for solutions to equation
\eqref{4-I} (provided that $P_4$ satisfies \eqref{5-I}) in Theorem
\ref{theo:7-4.2}. By such a theorem we immediately deduce,
Corollary \ref{cor:8-4.2}, the following unique continuation
property. Let $L_k=\sum_{i,j=1}^ng_k^{ij}(x)\partial_{ij}^2$,
$k=1,2$, where $g_k=\{g_k^{ij}(x)\}_{i,j=1}^{n}$ are symmetric
valued function whose entries belong to
$C^{1,1}\left(\overline{B}_1\right)$. Assuming that
$\{g_k^{ij}(x)\}_{i,j=1}^{n}$, $k=1,2$ satisfy a uniform
ellipticity condition in $B_1$, let $\nu_*$ and $\nu^*$ ($\mu_*$
and $\mu^*$) be the minimum and the maximum eigenvalues of
$\{g_1^{ij}(0)\}_{i,j=1}^{n}$ ($\{g_2^{ij}(0)\}_{i,j=1}^{n}$)
respectively, and let
$\beta>\sqrt{\frac{\mu^*\nu^*}{\mu_*\nu_*}}-1$. We have that
\begin{equation}
\label{30-I} \quad
    \hbox{if} \qquad
    u(x)=O\left(e^{-|x|^{-\beta}}\right), \quad
    \hbox{  as } x\rightarrow 0, \quad
    \hbox{  then } u\equiv0 \quad
    \hbox{  in } B_1.
\end{equation}

Since \eqref{30-I} has been proved for the first time in
\cite{Co-K}, see also \cite{Co-Gr-T}, where the sharp character of
property \eqref{30-I} has been emphasized,  we believe useful to
compare our procedure with the one followed in \cite{Co-K}. In the
present paper, as well as in \cite{Co-K}, the bulk of the proof
consists in obtaining a Carleman estimate for $P_4=L_2L_1$ with
weight function $e^{-\left(\sigma_0(x)\right)^{-\beta}}$, where
$\beta>\sqrt{\frac{\mu^*\nu^*}{\mu_*\nu_*}}-1$ and
$\left(\sigma_0(x)\right)^2$ is a suitable positive definite
quadratic form (Theorem \ref{theo:6-4.2}). In turn, here and in
\cite{Co-K}, the Carleman estimate for $P_4$ is obtained by an
iteration of two Carleman estimates for the operators $L_1$ and
$L_2$ with the same weight function
$e^{-\left(\sigma_0(x)\right)^{-\beta}}$. However, while in
\cite{Co-K} and \cite{Co-Gr-T} the proof of Carleman estimates for
$L_1$ and $L_2$ is carried out by a careful analysis of the
pseudoconvexity conditions, \cite{HO63}, \cite{HO2}, \cite{I04},
in the present paper, Section \ref{Sec4.1}, we obtain the same
estimates by a more elementary and direct way. More precisely, we
adapt appropriately a technique introduced in \cite{E-V} in the
context of parabolic operators. A prototype of this technique was
already used in \cite{Ke-Wa} in the  issue of the boundary unique
continuation for harmonic functions. Such a technique, which is
based only on integration by parts and on the fundamental theorem
of calculus, being direct and elementary, makes it possible to
easily control the constants that occur in the final three sphere
inequality.

Finally, let us notice that the above results can be extended also
to treat fourth order operators having leading part $\mathcal{L}u$
given by \eqref{1-I} and involving lower order terms. An example
of practical relevance is, for instance, the equilibrium problem
for a thin plate resting on an elastic foundation. According to
the Winkler model \cite{Win}, the corresponding equation is
\begin{equation}
\label{lower_order} \mathcal{L}u+ku=0, \quad \hbox{in } \Omega,
\end{equation}
where $k=k(x)$ is a smooth, strictly positive function. Indeed, 
in view of Theorem \ref{theo:7-4.2}, the three
sphere inequalities established in Section \ref{Sec4.3} extend to equation \eqref{lower_order}.

The plan of the paper is as follows. In Section \ref{SecNotation} we
introduce some basic notation. In Section \ref{SecCauchy} we present
the main results for the Cauchy problem, see Theorem \ref{theo:LSC}.
In Section \ref{Sec4.1} we prove a Carleman estimate for second
order elliptic operators, Theorem \ref{theo:4-4.1}, which will be
used in Section \ref{Sec4.2} to derive a Carleman estimate for
fourth order operators obtained as composition of two second order
elliptic operators, Theorem \ref{theo:6-4.2}. In the same Section,
as a consequence of Theorem \ref{theo:6-4.2}, we also derive a three
sphere inequality and the unique continuation property for such
fourth order operators, see Theorem \ref{theo:7-4.2} and Corollary
\ref{cor:8-4.2} respectively. Finally, in Section \ref{Sec4.3}, the results of Section
\ref{Sec4.2} are applied to the anisotropic plate operator,
obtaining the desired three sphere inequality, see Theorems
\ref{theo:9-4.3}, \ref{theo:12-4.3} and \ref{theo:13-4.3}.

\section{Notation\label{SecNotation}}

Let $P=(x_1(P), x_2(P))$ be a point of $\R^2$.
We shall denote by $B_r(P)$ the ball in $\R^2$ of radius $r$ and
center $P$ and by $R_{a,b}(P)$ the rectangle of center $P$ and sides parallel
to the coordinate axes, of length $a$ and $b$, namely
$R_{a,b}(P)=\{x=(x_1,x_2)\ |\ |x_1-x_1(P)|<a,\ |x_2-x_2(P)|<b \}$. To simplify the notation,
we shall denote $B_r=B_r(O)$, $R_{a,b}=R_{a,b}(O)$.

\noindent When representing locally a boundary as a graph, we use
the following definition.
\begin{definition}
  \label{def:2.1} (${C}^{k,\alpha}$ regularity)
Let $\Omega$ be a bounded domain in ${\R}^{2}$. Given $k,\alpha$,
with $k\in\N$, $0<\alpha\leq 1$, we say that a portion $S$ of
$\partial \Omega$ is of \textit{class ${C}^{k,\alpha}$ with
constants $\rho_{0}$, $M_{0}>0$}, if, for any $P \in S$, there
exists a rigid transformation of coordinates under which we have
$P=0$ and
\begin{equation*}
  \Omega \cap R_{\frac{\rho_0}{M_0},\rho_0}=\{x=(x_1,x_2) \in R_{\frac{\rho_0}{M_0},\rho_0}\quad | \quad
x_{2}>\psi(x_1)
  \},
\end{equation*}
where $\psi$ is a ${C}^{k,\alpha}$ function on
$\left(-\frac{\rho_0}{M_0},\frac{\rho_0}{M_0}\right)$ satisfying
\begin{equation*}
\psi(0)=0,
\end{equation*}
\begin{equation*}
\psi' (0)=0, \quad \hbox {when } k \geq 1,
\end{equation*}
\begin{equation*}
\|\psi\|_{{C}^{k,\alpha}\left(-\frac{\rho_0}{M_0},\frac{\rho_0}{M_0}\right)} \leq M_{0}\rho_{0}.
\end{equation*}

\medskip
\noindent When $k=0$, $\alpha=1$, we also say that $S$ is of
\textit{Lipschitz class with constants $\rho_{0}$, $M_{0}$}.
\end{definition}
\begin{rem}
  \label{rem:2.1}
  We use the convention to normalize all norms in such a way that their
  terms are dimensionally homogeneous with the $L^\infty$ norm and coincide with the
  standard definition when the dimensional parameter equals one.
  For instance, the norm appearing above is meant as follows
\begin{equation*}
  \|\psi\|_{{C}^{k,\alpha}\left(-\frac{\rho_0}{M_0},\frac{\rho_0}{M_0}\right)} =
  \sum_{i=0}^k \rho_0^i
  \|\psi^{(i)}\|_{{L}^{\infty}\left(-\frac{\rho_0}{M_0},\frac{\rho_0}{M_0}\right)}+
  \rho_0^{k+\alpha}|\psi^{(k)}|_{\alpha, \left(-\frac{\rho_0}{M_0},\frac{\rho_0}{M_0}\right)},
\end{equation*}
where
\begin{equation*}
|\psi^{(k)}|_{\alpha,\left(-\frac{\rho_0}{M_0},\frac{\rho_0}{M_0}\right)}= \sup_
{\overset{\scriptstyle x', \ y'\in
\left(-\frac{\rho_0}{M_0},\frac{\rho_0}{M_0}\right)}{\scriptstyle x'\neq y'}}
\frac{|\psi^{(k)}(x')-\psi^{(k)}(y')|} {|x'-y'|^\alpha}.
\end{equation*}

Similarly, denoting by $\nabla^i u$ the vector which components are the derivatives of order $i$ of the function $u$,
\begin{equation*}
  \|u\|_{{C}^{k,1}(\Omega)} =\sum_{i=0}^{k+1}
  {\rho_{0}}^{i}\|{\nabla}^{i} u\|_{{L}^{\infty}(\Omega)},
\end{equation*}
\begin{equation*}
\|u\|_{L^2(\Omega)}=\rho_0^{-1}\left(\int_\Omega
u^2\right) ^{\frac{1}{2}},
\end{equation*}
\begin{equation*}
\|u\|_{H^m(\Omega)}=\rho_0^{-1}\left(\sum_{i=0}^m
\rho_0^{2i}\int_\Omega|\nabla^i u|^2\right)^{\frac{1}{2}},
\end{equation*}
and so on for boundary and trace norms such as
$\|\cdot\|_{H^{\frac{1}{2}}(\partial\Omega)}$,
$\|\cdot\|_{H^{-\frac{1}{2}}(\partial\Omega)}$.

Notice also that, when $\Omega=B_{R}(0)$, then $\Omega$ satisfies
Definition \ref{def:2.1} with $\rho_{0}=R$, $M_0=2$ and therefore, for
instance,
\begin{equation*}
\|u\|_{H^m(B_R)}=R^{-1}\left(\sum_{i=0}^m
R^{2i}\int_{B_R}|\nabla^i u|^2\right)^{\frac{1}{2}},
\end{equation*}
\end{rem}

Given a bounded domain $\Omega$ in $\R^2$ such that $\partial
\Omega$ is of class $C^{k,\alpha}$, with $k\geq 1$, we consider as
positive the orientation of the boundary induced by the outer unit
normal $n$ in the following sense. Given a point
$P\in\partial\Omega$, let us denote by $\tau=\tau(P)$ the unit
tangent at the boundary in $P$ obtained by applying to $n$ a
counterclockwise rotation of angle $\frac{\pi}{2}$, that is
\begin{equation}
    \label{eq:2.tangent}
        \tau=e_3 \times n,
\end{equation}
where $\times$ denotes the vector product in $\R^3$, $\{e_1,
e_2\}$ is the canonical basis in $\R^2$ and $e_3=e_1 \times e_2$.

Given any connected component $\cal C$ of $\partial \Omega$ and
fixed a point $P\in\cal C$, let us define as positive the
orientation of $\cal C$ associated to an arclength
parametrization $\varphi(s)=(x_1(s), x_2(s))$, $s \in [0, l(\cal
C)]$, such that $\varphi(0)=P$ and
$\varphi'(s)=\tau(\varphi(s))$. Here $l(\cal C)$ denotes the
length of $\cal C$.

Throughout the paper, we denote by $\partial_i u$, $\partial_s u$, and $\partial_n u$
the derivatives of a function $u$ with respect to the $x_i$
variable, to the arclength $s$ and to the normal direction $n$,
respectively, and similarly for higher order derivatives.

We denote by $\mathbb{M}^2$ the space of $2 \times 2$ real valued
matrices and by ${\mathcal L} (X, Y)$ the space of bounded linear
operators between Banach spaces $X$ and $Y$.

For every $2 \times 2$ matrices $A$, $B$ and for every $\mathbb{L}
\in{\mathcal L} ({\mathbb{M}}^{2}, {\mathbb{M}}^{2})$, we use the
following notation:
\begin{equation}
  \label{eq:2.notation_1}
  ({\mathbb{L}}A)_{ij} = L_{ijkl}A_{kl},
\end{equation}
\begin{equation}
  \label{eq:2.notation_2}
  A \cdot B = A_{ij}B_{ij},
\end{equation}
\begin{equation}
  \label{eq:2.notation_3}
  |A|= (A \cdot A)^{\frac {1} {2}},
\end{equation}
\begin{equation}
  \label{eq:2.notation_3bis}
  A^{sym} =  \frac{1}{2} \left ( A + A^t \right ),
\end{equation}
where $A^t$ denotes the transpose of the matrix $A$.
Notice that here and in the sequel summation over repeated indexes
is implied.

\section{Stability estimates for the Cauchy problem\label{SecCauchy}}

Let us consider a thin plate
$\Omega\times[-\frac{h}{2},\frac{h}{2}]$ with middle surface
represented by a bounded domain $\Omega$ in $\R^2$ and having
uniform thickness $h$, $h<<\hbox{diam}(\Omega)$. Given a positive constant $M_1$, we assume that
\begin{equation}
  \label{eq:M_1}
  |\Omega|\leq M_1\rho_0^2.
\end{equation}
Let us assume that the plate is made of nonhomogeneous linear
elastic material with elasticity tensor $\C(x) \in{\cal L}
({\M}^{2}, {\M}^{2})$ and that body forces inside $\Omega$ are
absent. We denote by $\hat M$ a couple field acting on the
boundary $\partial\Omega$.

We shall assume throughout that the elasticity tensor $\C$ has cartesian components
$C_{ijkl}$ which satisfy the following conditions
\begin{equation}
  \label{eq:sym-conditions-C-components}
    C_{ijkl} = C_{ klij} =
    C_{ klji} \quad i,j,k,l
    =1,2, \hbox{ a.e. in } \Omega.
\end{equation}
We recall that the symmetry conditions
\eqref{eq:sym-conditions-C-components} are equivalent to
\begin{equation}
  \label{eq:sym-conditions-C-1}
  {\C}A={\C} {A}^{sym},
\end{equation}
\begin{equation}
  \label{eq:eq:sym-conditions-C-2}
  {\C}A \quad \hbox{is } symmetric,
\end{equation}
\begin{equation}
  \label{eq:eq:sym-conditions-C-3}
  {\C}A \cdot B= {\C}B \cdot A,
\end{equation}
for every $2 \times 2$ matrices $A$, $B$.

In order to simplify the presentation, we shall assume that the tensor
$\mathbb{C}$ is defined in all of $\R^2$.

On the elasticity tensor $\mathbb{C}$ we make the following
assumptions:

\medskip
{I)} \textit{Regularity}
\begin{equation}
  \label{eq:3.bound}
  \mathbb{C} \in C^{1,1}(\R^2,   {\mathcal L} ({\mathbb{M}}^{2},
  {\mathbb{M}}^{2})),
\end{equation}
with
\begin{equation}
  \label{eq:3.bound_quantit}
  \sum_{i,j,k,l=1}^2 \sum_{m=0}^2 \rho_0^m \|\nabla^m C_{ijkl}\|_{L^\infty(\R^2)} \leq
    M,
\end{equation}
where $M$ is a positive constant;

{II)} \textit{Ellipticity (strong convexity)} There exists $\gamma>0$
such that

\begin{equation}
  \label{eq:3.convex}
    {\mathbb{C}}A \cdot A \geq  \gamma |A|^2, \qquad \hbox{in } \R^2,
\end{equation}
for every $2\times 2$ symmetric matrix $A$.

 Condition \eqref{eq:sym-conditions-C-components} implies that instead of $16$ coefficients
we actually deal with $6$ coefficients and we denote
\begin{center}
\( {\displaystyle \left\{
\begin{array}{lr}
    C_{1111}=A_0, \ \ C_{1122}=C_{2211}=B_0,
         \vspace{0.12em}\\
    C_{1112}=C_{1121}=C_{1211}=C_{2111}=C_0,
        \vspace{0.12em}\\
    C_{2212}=C_{2221}=C_{1222}=C_{2122}=D_0,
        \vspace{0.12em}\\
    C_{1212}=C_{1221}=C_{2112}=C_{2121}=E_0,
        \vspace{0.12em}\\
    C_{2222}=F_0,
        \vspace{0.25em}\\
\end{array}
\right. } \) \vskip -3.0em
\begin{eqnarray}
\ & & \label{3.coeff6}
\end{eqnarray}
\end{center}
and
\begin{equation}
    \label{3.coeffsmall}
    a_0=A_0, \ a_1=4C_0, \ a_2=2B_0+4E_0, \ a_3=4D_0, \ a_4=F_0.
\end{equation}

Let $S(x)$ be the following $7\times 7$ matrix
\begin{equation}
    \label{3. S(x)}
    S(x) = {\left(
\begin{array}{ccccccc}
  a_0   & a_1   & a_2   & a_3   & a_4   & 0    &    0    \\
  0     & a_0   & a_1   & a_2   & a_3   & a_4  &    0    \\
  0     & 0     & a_0   & a_1   & a_2   & a_3  &    a_4  \\
  4a_0  & 3a_1  & 2a_2  & a_3   & 0     & 0    &    0    \\
  0     & 4a_0  & 3a_1  & 2a_2  & a_3   & 0    &    0    \\
  0     & 0     & 4a_0  & 3a_1  & 2a_2  & a_3  &    0    \\
  0     & 0     & 0     & 4a_0  & 3a_1  & 2a_2 &    a_3  \\
  \end{array}
\right)},
\end{equation}
and
\begin{equation}
    \label{3.D(x)}
    {\mathcal{D}}(x)= \frac{1}{a_0} |\det S(x)|.
\end{equation}
Let us introduce the fourth order \emph{plate tensor}
\begin{equation}
    \label{3.P}
    \mathbb{P}= \frac{h^3}{12} \mathbb{C}, \quad\hbox{in } \R^2.
\end{equation}
With this notation we may rewrite the plate equation \eqref{1-I}
in the equivalent compact form

\begin{equation}
    \label{3.compact_plate}
{\rm div}({\rm div} (
      {\mathbb P}\nabla^2 u))=0, \quad\hbox{in } \Omega,
\end{equation}
where the divergence of a second order tensor field $T(x)$ is
defined, as usual, by
\begin{equation*}
  (\divrg T(x))_i=\partial_j T_{ij}(x).
\end{equation*}
Our approach to the Cauchy problem leads us to consider the
following complete, inhomogeneous equation
\begin{equation}
    \label{3.compact_plate_inhom}
{\rm div}({\rm div} (
      {\mathbb P}\nabla^2 u))=f + {\rm div}F + {\rm div}({\rm div} \mathcal{F}), \quad\hbox{in } B_R,
\end{equation}
where $f\in L^2(\R^2)$, $F\in L^2(\R^2;\R^2)$, $\mathcal{F}\in
L^2(\R^2;\mathbb{M}^2)$ satisfy the bound
\begin{equation}
    \label{3.bound_inhom}
\|f\|_{L^2(\R^2)}+\frac{1}{\rho_0}\|F\|_{L^2(\R^2;\R^2)}+
\frac{1}{\rho_0^2}\|\mathcal{F}\|_{L^2(\R^2;\mathbb{M}^2)} \leq
\frac{\epsilon}{\rho_0^4},
\end{equation}
for a given $\epsilon>0$.

A weak solution to \eqref{3.compact_plate_inhom} is a function
$u\in H^2(B_R)$ satisfying
\begin{equation}
    \label{3.inhom_weak}
\int_{B_R}\mathbb{P}\nabla^2
u\cdot\nabla^2\varphi=\int_{B_R}f\varphi-\int_{B_R}F\cdot\nabla\varphi+
\int_{B_R}\mathcal{F}\cdot\nabla^2\varphi, \quad\hbox{for every
}\varphi\in H^2_0(B_R).
\end{equation}

In the sequel we shall use the following condition on the elasticity
tensor that we have conventionally labeled \emph{dichotomy condition}.

\begin{definition}
  \label{def:dichotomy}(\textbf{Dichotomy condition}) Let $\mathcal{O}$ be an open set of $\mathbb{R}^2$. We shall say that the tensor
$\mathbb{P}$ satisfies the \emph{dichotomy condition} in $\mathcal{O}$ if
one of the following conditions holds true
\begin{subequations}
\begin{eqnarray}
\label{3.D(x)bound} &&  {\mathcal{D}}(x)>0,
\quad\hbox{for every } x\in \overline{\mathcal{O}}, \\[2mm]
\label{3.D(x)bound 2} && {\mathcal{D}}(x)=0, \quad\hbox{for every }
x\in \overline{\mathcal{O}},
\end{eqnarray}
\end{subequations}
where ${\mathcal{D}}(x)$ is defined by \eqref{3.D(x)}.
\end{definition}

\begin{rem}
  \label{rem:dichotomy} Whenever \eqref{3.D(x)bound} holds we denote
\begin{equation}
    \label{delta-1}
    \delta_1=\min_{\overline{\mathcal{O}}}{\mathcal{D}}.
\end{equation}
We emphasize that, in all the following statements, whenever a
constant is said to depend on $\delta_1$ (among other quantities) it
is understood that such dependence occurs \textit{only} when
\eqref{3.D(x)bound} holds.
\end{rem}

\begin{rem}
  \label{rem:orthotropy}
Let us briefly comment the \emph{dichotomy condition} in the
special class of \emph{orthotropic} materials, frequently used in
practical applications. In particular, let us assume that through
each point of the plate there pass three mutually orthogonal
planes of elastic symmetry and that these planes are parallel at
all points. In this case
\begin{equation}
    \label{ortho-1}
    C_0=0, \quad D_0=0,
\end{equation}
so that
\begin{equation}
    \label{ortho-2}
    a_0=A_0, \quad a_1=0, \quad a_2=2B_0+4E_0, \quad a_3=0, \quad
    a_4=F_0,
\end{equation}
and
\begin{equation}
    \label{ortho-3}
    {\mathcal{D}}(x) = 16 a_0 a_4 ( a_2^2 - 4 a_0 a_4)^2.
\end{equation}
Since, by the ellipticity condition \eqref{eq:3.convex}, the
coefficients $a_0$, $a_4$ are strictly positive, the dichotomy
condition reduces to the vanishing or not vanishing of the factor
$a_2^2 - 4 a_0 a_4$.

Introducing the engineering constitutive coefficients $E_1$,
$E_2$, $G_{12}$, $\nu_{12}$, $\nu_{21}$, with $\nu_{12} E_2 =
\nu_{21} E_1$ by the symmetry of $\mathbb{C}$, we have
\begin{equation}
    \label{ortho-4}
    a_2^2 - 4 a_0 a_4 = 4E_1^2
    \left (
    \left (
    \frac{\nu_{12}}{k} + \frac{1- \frac{\nu_{12}^2}{k}  }{ m+\nu_{12}}
    \right )^2
    -
    \frac{1}{k}
    \right ),
\end{equation}
where
\begin{equation}
    \label{ortho-5}
    k= \frac{E_1}{E_2}, \quad m=\frac{E_1}{2G_{12}}-\nu_{12}.
\end{equation}
The \emph{isotropic} case corresponds to $k=1$ and $m=1$, so that,
by \eqref{ortho-4}, ${\mathcal{D}}(x) \equiv 0$.

Let us notice that
\begin{equation}
    \label{ortho-6}
    \hbox{if } m = \sqrt k, \quad \hbox{then } {\mathcal{D}}(x) \equiv
0.
\end{equation}
This shows that there exist anisotropic materials such that
\eqref{3.D(x)bound 2} is satisfied. Roughly speaking, this simple
example makes clear that the value of ${\mathcal{D}}(x)$ cannot be
interpreted as a ``measure of anisotropy''.

Moreover, a case of practical interest corresponds to the
vanishing of the Poisson's coefficient $\nu_{12}$, which gives
\begin{equation}
    \label{ortho-7}
    a_2^2 - 4 a_0 a_4 = 4E_1^2
    \left (
    \frac{1}{m^2}-\frac{1}{k}
    \right ),
\end{equation}
so that
\begin{equation}
    \label{ortho-8}
    \hbox{if } m \neq \sqrt k, \quad \hbox{then } {\mathcal{D}}(x)
    > 0.
\end{equation}
This gives an explicit class of examples in which
\eqref{3.D(x)bound} holds.
\end{rem}

\begin{theo} [Three sphere inequality - complete equation]
   \label{theo:3sfere_completa}
Let $u \in H^4({B}_R)$ be a solution to the equation
\eqref{3.compact_plate_inhom}, where $\mathbb{P}$, defined by
\eqref{3.P}, satisfies \eqref{eq:sym-conditions-C-components},
\eqref{eq:3.bound_quantit}, \eqref{eq:3.convex} and the dichotomy condition
in $B_R$. There exist positive constants $k$ and $s$,
$k\in(0,1)$ only depending on $\gamma$ and
$M$, $s\in(0,1)$ only depending on
$\gamma$, $M$ and on
$\delta_1=\min_{\overline{B}_R}{\mathcal{D}}$, such that for every
$r_1$, $r_2$, $r_3$, $0<r_1<r_2<kr_3<sR$, the following inequality
holds
\begin{equation}
    \label{3.3sfere_eqcompl}
    \|u\|_{L^2(B_{r_2})}\leq C\left(\|u\|_{L^2(B_{r_1})}+\epsilon\right)^\alpha
    \left(\|u\|_{H^4(B_{r_3})}+\epsilon\right)^{1-\alpha}
\end{equation}
where $C>0$ and $\alpha\in(0,1)$ only depend on
$\gamma$, $M$, $\delta_1$, $\frac{r_2}{r_1}$,
$\frac{r_3}{r_2}$ and $\delta_1=\min_{\overline{B}_R}{\mathcal{D}}$.
\end{theo}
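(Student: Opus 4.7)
The plan is to reduce the statement to the homogeneous three sphere inequality of Theorem \ref{theo:7-4.2} by handling the inhomogeneity as a controlled perturbation inside the underlying Carleman estimate.

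First, I would invoke the dichotomy condition. By the analysis anticipated for Sections \ref{SecCauchy} and \ref{Sec4.3}, in each of the two alternatives \eqref{3.D(x)bound}, \eqref{3.D(x)bound 2} the principal part of the plate operator $\mathcal{L}=\divrg\divrg(\mathbb{P}\nabla^2\cdot)$ factors as $P_4=L_2L_1$, with $L_1,L_2$ second order uniformly elliptic operators with real $C^{1,1}$ coefficients, and we can write $\mathcal{L}=P_4+Q$ with $Q$ a third order operator whose coefficients are controlled by $\gamma$, $M$ (and, only in case \eqref{3.D(x)bound}, by $\delta_1$, as the factorization degenerates when $\mathcal{D}\to 0$). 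The equation \eqref{3.compact_plate_inhom} then becomes
\begin{equation*}
P_4 u + Q u \,=\, f + \divrg F + \divrg(\divrg \mathcal{F}).
\end{equation*}

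Second, I would upgrade the proof of Theorem \ref{theo:7-4.2} to the present inhomogeneous setting. That proof applies the Carleman estimate of Theorem \ref{theo:6-4.2} to $\eta u$, where $\eta$ is a radial cutoff supported in an annulus surrounding the shell of radii $r_1,r_2,r_3$, and controls a weighted $L^2$-norm of $\eta u$ by a weighted $L^2$-norm of $P_4(\eta u)$. In our case
\begin{equation*}
P_4(\eta u) \,=\, [P_4,\eta]\,u \;+\; \eta\bigl(-Qu + f + \divrg F + \divrg(\divrg \mathcal{F})\bigr),
\end{equation*}
so that, besides the standard commutator contribution supported where $\nabla\eta\neq 0$ (treated as in the homogeneous case and absorbed in the annular term $\|u\|_{H^4(B_{r_3})}$), one gets an inhomogeneous contribution that, by \eqref{3.bound_inhom} and an integration by parts against the Carleman weight to move derivatives off $F$ and $\mathcal{F}$, is bounded by $C\epsilon/\rho_0^4$ times a fixed power of the large parameter $\tau$. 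The first order term $\eta Qu$ is absorbed on the left, up to the interior bound $\|u\|_{H^3(B_{r_3})}\le C\|u\|_{H^4(B_{r_3})}$.

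Third, I would carry out the optimization in the large parameter $\tau$ of the weight $e^{-\sigma_0^{-\beta}}$ exactly as in the proof of Theorem \ref{theo:7-4.2}: balancing the annular term, now dominated by $\|u\|_{H^4(B_{r_3})}+\epsilon$, against the inner term, dominated by $\|u\|_{L^2(B_{r_1})}+\epsilon$, yields \eqref{3.3sfere_eqcompl} with the same exponent $\alpha\in(0,1)$ and multiplicative constant $C$ as in the homogeneous theorem. The main technical hurdle I expect is the precise bookkeeping of the constants in the statement: the ratio $k$ must be fixed so that the cutoff annulus stays inside $B_R$ with enough room for the conical weight of Theorem \ref{theo:6-4.2}, and this depends only on $\gamma,M$; by contrast the scale factor $s$ and the final pair $(C,\alpha)$ degenerate only as $\delta_1\to 0^+$ in case \eqref{3.D(x)bound} and are $\delta_1$-independent in case \eqref{3.D(x)bound 2}, which forces one to track how $\delta_1$ enters the ellipticity bounds and the $C^{1,1}$ norms of $L_1,L_2$ produced by the dichotomy-based factorization.
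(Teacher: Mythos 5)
Your overall strategy --- redoing the Carleman/three-sphere machinery with the inhomogeneity carried along as a perturbation --- is not the route the paper takes, and it has a genuine gap at the point where you treat the source terms. The right-hand side of \eqref{3.compact_plate_inhom} contains $\divrg F$ and $\divrg(\divrg\mathcal{F})$ with $F,\mathcal{F}$ merely in $L^2$ and only their $L^2$ norms controlled by $\epsilon$ via \eqref{3.bound_inhom}; as an element of $L^2$ the full right-hand side is \emph{not} bounded by $\epsilon$. The Carleman estimate \eqref{1-27} of Theorem \ref{theo:6-4.2} bounds the left-hand side by $\int\sigma_0^{5\beta+6}w_0^{-2\tau}|\mathcal{L}u|^2$, i.e.\ by a weighted $L^2$ norm of the right-hand side; your proposed ``integration by parts against the Carleman weight to move derivatives off $F$ and $\mathcal{F}$'' cannot be performed inside that quantity, because the source enters squared and not paired linearly against a test function. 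To make your scheme work you would need a genuinely different, dual-form Carleman estimate with the source measured in a weighted $H^{-2}$-type norm, which the paper neither states nor proves. This is not a bookkeeping issue but a missing key estimate.

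The paper's actual proof avoids the problem entirely and is much shorter: it introduces the unique solution $u_0$ of the Dirichlet problem \eqref{3.u_0} on $B_R$ with the same inhomogeneous right-hand side and homogeneous clamped boundary conditions. Testing the weak formulation \eqref{3.inhom_weak} with $\varphi=u_0$ --- which is exactly the setting in which the distributional terms $\divrg F$ and $\divrg(\divrg\mathcal{F})$ pair against $\nabla\varphi$ and $\nabla^2\varphi$ and are therefore controlled by \eqref{3.bound_inhom} --- and using strong convexity plus the Poincar\'e inequality in $H^2_0(B_R)$ gives $\|u_0\|_{L^2(B_R)}\leq C\epsilon$. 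Then $u-u_0$ solves the homogeneous plate equation, the already-established three sphere inequality of Theorem \ref{theo:13-4.3} applies to it, and the triangle inequality yields \eqref{3.3sfere_eqcompl}. I recommend you adopt this subtraction argument: it is where the specific structure of the bound \eqref{3.bound_inhom} (an $H^{-2}$-type bound, not an $L^2$ bound) is actually used.
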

\begin{proof}
Let us consider the unique solution $u_0$ to
\begin{equation}
   \label{3.u_0}
\left\{
\begin{array}{lr}
     {\rm div}({\rm div} (
      {\mathbb P}\nabla^2 u_0))=f + {\rm div}F + {\rm div}({\rm div} \mathcal{F}), &\hbox{in } B_R,\\
      u_0=0,& \hbox{on }\partial B_R,\\
      \frac{\partial u_0}{\partial\nu}=0, &\hbox{on } \partial B_R.\\
\end{array}
\right.
\end{equation}
By using the weak formulation \eqref{3.inhom_weak} with
$\varphi=u_0$, by the strong convexity condition
\eqref{eq:3.convex}, by using the bound \eqref{3.bound_inhom} on
the inhomogeneous term and by Poincar\'{e} inequality in
$H^2_0(B_R)$, we have
\begin{equation}
    \label{3.u_0bound}
    \|u_0\|_{L^2(B_{R})}\leq \|u_0\|_{H^2_0(B_{R})}\leq C\epsilon,
\end{equation}
with $C$ only depending on $\gamma$.

Noticing that $u-u_0$ satisfies the hypotheses of Theorem
\ref{theo:13-4.3}, we have that the thesis immediately follows.
\end{proof}

Let $\Sigma$ be an open connected portion of $\partial \Omega$
such that $\Sigma$ is of class $C^{1,1}$ with constants $\rho_0$, $M_0$, and there exists a point $P_0 \in \Sigma$ such that
\begin{equation}
    \label{1-page6-par3}
    R_{ \frac{\rho_0}{M_0}, \rho_0}(P_0) \cap \partial \Omega
    \subset \Sigma.
\end{equation}
We shall consider as test function space the space
$H_{co}^2(\Omega \cup \Sigma)$ consisting of the functions
$\varphi \in H^2(\Omega)$ having support compactly contained in
$\Omega \cup \Sigma$. We denote by $H^{ \frac{3}{2}}_{co}(\Sigma)$
the class of $H^{ \frac{3}{2}}(\Sigma)$ traces of functions
$\varphi \in H_{co}^2(\Omega \cup \Sigma)$, and by $H^{
\frac{1}{2}}_{co}(\Sigma)$ the class of $H^{ \frac{1}{2}}(\Sigma)$
traces of the normal derivative $ \frac{\partial \varphi}{\partial
n}$ of functions $\varphi \in H_{co}^2(\Omega \cup \Sigma)$.
Moreover, for every positive integer number $m$, we define $H^{-
\frac{m}{2}}(\Sigma)$ as the dual space to
$H^{\frac{m}{2}}(\Sigma)$ based on the $L^2(\Sigma)$ dual pairing.
Let $g_1\in H^{ \frac{3}{2}}(\Sigma)$, $g_2\in H^{
\frac{1}{2}}(\Sigma)$ and $\widehat{M}\in H^{-
\frac{1}{2}}(\Sigma; \mathbb{R}^2)$ be such that
\begin{equation}
    \label{2-page6-par3}
    \|g_1\|_{H^{\frac{3}{2}}(\Sigma)}+\rho_0\|g_2\|_{H^{\frac{1}{2}}(\Sigma)}+
    \rho_0^2\|\widehat{M}\|_{H^{-\frac{1}{2}}(\Sigma; \R^2)} \leq \eta,
\end{equation}
for some positive constant $\eta$.

We consider the following Cauchy problem
\begin{center}
\( {\displaystyle \left\{
\begin{array}{lr}
  \divrg(\divrg (
  {\mathbb P}\nabla^2 u))=0,
  & \hbox{in}\ \Omega,
    \vspace{0.25em}\\
  u=g_1, & \hbox{on}\ \Sigma,
        \vspace{0.25em}\\
  \frac{\partial u}{\partial n}=g_2, & \hbox{on}\ \Sigma,
          \vspace{0.25em}\\
  ({\mathbb {P}}\nabla^2 u) n \cdot n=-\widehat{M}_n, & \hbox{on}\ \Sigma,
          \vspace{0.25em}\\
  \divrg({\mathbb {P}} \nabla^2 u)\cdot n + (({\mathbb {P}}\nabla^2 u) n \cdot \tau),_{s}=\widehat{M}_{\tau,s}, & \hbox{on}\
\Sigma,
\end{array}
\right. } \) \vskip -8.4em
\begin{eqnarray}
& & \label{eq:1-page7-par3}\\
& & \label{eq:2-page7-par3}\\
& & \label{eq:3-page7-par3}\\
& & \label{eq:4-page7-par3}\\
& & \label{eq:5-page7-par3}
\end{eqnarray}
\end{center}
where $\widehat{M}_\tau=\widehat{M}\cdot n$,
$\widehat{M}_n=\widehat{M}\cdot \tau$ denote respectively the
twisting moment and the bending moment applied at the boundary.

A weak solution to
\eqref{eq:1-page7-par3}--\eqref{eq:5-page7-par3} is a function $u
\in H^2(\Omega)$ such that
\begin{equation}
\label{6-page7-par3}
    \int_\Omega \mathbb P \nabla^2 u \cdot \nabla^2 \varphi =
    - \int_\Sigma \left (
    \widehat{M}_{\tau,s} \varphi + \widehat{M}_n \varphi_n \right
    ), \quad \hbox{for every } \varphi \in H_{co}^2(\Omega \cup
    \Sigma),
\end{equation}
with
\begin{equation}
\label{7-page7-par3}
    u|_{\Sigma} = g_1, \quad \frac{\partial u}{\partial n}|_{\Sigma}=g_2.
\end{equation}
We denote
\begin{equation}
\label{1-page8-par3}
    R^-_{ \frac{\rho_0}{M_0}, \rho_0} (P_0)=\{ (x_1,x_2) \in R_{
    \frac{\rho_0}{M_0}, \rho_0}(P_0) | \ x_2 < \psi(x_1)\},
\end{equation}
that is
\begin{equation}
\label{2-page8-par3}
    R^-_{ \frac{\rho_0}{M_0}, \rho_0} (P_0)=
    R_{\frac{\rho_0}{M_0}, \rho_0}(P_0) \setminus
    \overline{\Omega}.
\end{equation}
\begin{lem}
\label{lem:traces}
    Let $g_1 \in H^{ \frac{3}{2}}(\Sigma)$, $g_2 \in H^{
    \frac{1}{2}}(\Sigma)$. Then there exists $v \in H^2(R^-_{ \frac{\rho_0}{M_0}, \rho_0}
    (P_0))$ such that
\begin{equation}
\label{3-page8-par3}
    v|_{\Sigma \cap R_{\frac{\rho_0}{M_0}, \rho_0}(P_0)} = g_1,
\end{equation}
\begin{equation}
\label{4-page8-par3}
    \frac{\partial v}{\partial n}|_{\Sigma \cap R_{\frac{\rho_0}{M_0},
    \rho_0}(P_0)}= g_2
\end{equation}
and
\begin{equation}
\label{5-page8-par3}
    \|v\|_{H^2(R^-_{ \frac{\rho_0}{M_0}, \rho_0} (P_0))} \leq
    C \left ( \|g_1\|_{H^{\frac{3}{2}}(\Sigma)}+\rho_0  \|g_2\|_{H^{\frac{1}{2}}(\Sigma)}
    \right ),
\end{equation}
where $C$, $C>0$, only depends on $M_0$.
\end{lem}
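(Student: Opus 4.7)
The strategy is to flatten $\Sigma$ via its graph representation, invoke a classical trace–inversion result on a flat rectangle, and then pull the extension back.

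By Definition \ref{def:2.1}, choose coordinates so that $P_0=0$ and $\Sigma\cap R_{\rho_0/M_0,\rho_0}$ is the graph $x_2=\psi(x_1)$ with $\psi\in C^{1,1}$, $\psi(0)=\psi'(0)=0$, and $\|\psi\|_{C^{1,1}}\le M_0\rho_0$. The map $\Phi(y_1,y_2)=(y_1,\,y_2+\psi(y_1))$ is a $C^{1,1}$ diffeomorphism with Jacobian identically one. Since $|\psi|\le C(M_0)\rho_0$ on the strip $|y_1|<\rho_0/M_0$, the set $Q:=\{|y_1|<\rho_0/M_0,\ -\rho_0/2<y_2<0\}$ satisfies $\Phi(Q)\subset R^-_{\rho_0/M_0,\rho_0}(P_0)$, and the side $\{y_2=0\}\cap Q$ is mapped bijectively onto the relevant piece of $\Sigma$. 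Because $\Phi,\Phi^{-1}$ have derivatives bounded in terms of $M_0$, pullback and pushforward yield
\begin{equation*}
C(M_0)^{-1}\|w\|_{H^2(\Phi(Q))}\le \|w\circ\Phi\|_{H^2(Q)}\le C(M_0)\|w\|_{H^2(\Phi(Q))},
\end{equation*}
and analogous equivalences for the trace spaces $H^{3/2}$, $H^{1/2}$ on $\{y_2=0\}\cap Q$ versus $\Sigma\cap R_{\rho_0/M_0,\rho_0}$.

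Next, translate the Cauchy data. Define $\tilde g_1(y_1):=g_1(y_1,\psi(y_1))$ on $I:=(-\rho_0/M_0,\rho_0/M_0)$. Since the outer unit normal to $\Omega$ on $\Sigma$ is $n=(\psi'(y_1),-1)/\sqrt{1+\psi'(y_1)^2}$, a direct computation using $\partial_{y_1}\tilde v=\partial_{x_1}v+\psi'\partial_{x_2}v$ and $\partial_{y_2}\tilde v=\partial_{x_2}v$ shows that the prescription $\partial_n v=g_2$ on $\Sigma$ is equivalent to
\begin{equation*}
\partial_{y_2}\tilde v\big|_{y_2=0}=\frac{\psi'(y_1)}{1+\psi'(y_1)^2}(\tilde g_1)'(y_1)-\frac{g_2(y_1,\psi(y_1))}{\sqrt{1+\psi'(y_1)^2}}=:\tilde g_2(y_1).
\end{equation*}
The coefficients $\psi'/(1+\psi'^2)$ and $1/\sqrt{1+\psi'^2}$ are $C^{0,1}$ with seminorms bounded by a function of $M_0$, so multiplication by them is bounded on $H^{1/2}(I)$; combining this with the standard equivalence $\|(\tilde g_1)'\|_{H^{1/2}(I)}\sim \rho_0^{-1}\|\tilde g_1\|_{H^{3/2}(I)}$ gives
\begin{equation*}
\|\tilde g_1\|_{H^{3/2}(I)}+\rho_0\|\tilde g_2\|_{H^{1/2}(I)}\le C(M_0)\bigl(\|g_1\|_{H^{3/2}(\Sigma)}+\rho_0\|g_2\|_{H^{1/2}(\Sigma)}\bigr).
\end{equation*}

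Now invoke the classical flat trace inversion: there exists $\tilde v\in H^2(Q)$, supported away from the lateral and bottom sides of $Q$, with $\tilde v|_{y_2=0}=\tilde g_1$, $\partial_{y_2}\tilde v|_{y_2=0}=\tilde g_2$, and
\begin{equation*}
\|\tilde v\|_{H^2(Q)}\le C\bigl(\|\tilde g_1\|_{H^{3/2}(I)}+\rho_0\|\tilde g_2\|_{H^{1/2}(I)}\bigr),
\end{equation*}
where the constant is absolute thanks to the dimensionally homogeneous normalized norms of Remark \ref{rem:2.1}. A concrete construction uses a Poisson-type half-plane extension composed with a smooth cutoff in $y$. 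Finally define $v:=\tilde v\circ\Phi^{-1}$ on $\Phi(Q)$ and extend by zero to $R^-_{\rho_0/M_0,\rho_0}(P_0)\setminus\Phi(Q)$; the extension belongs to $H^2$ because $\tilde v$ vanishes near the non-flat boundary of $Q$. Properties \eqref{3-page8-par3}--\eqref{4-page8-par3} hold by construction, and chaining the three norm estimates above produces \eqref{5-page8-par3} with $C=C(M_0)$.

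The only mildly delicate points are the careful bookkeeping of the $\rho_0$-weighted norms from Remark \ref{rem:2.1} and the algebraic conversion of the normal-derivative datum into flat Dirichlet–Neumann data; the half-plane extension itself is entirely standard and the diffeomorphism $\Phi$ is elementary.
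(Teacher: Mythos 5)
Your approach is the standard one and is essentially the argument behind the result the paper simply cites (the proof in the paper is a one-line reference to Lemma 6.1 of \cite{A-R-R-V}): flatten the graph, convert the oblique Neumann datum into a flat $\partial_{y_2}$-datum, invoke the half-plane trace-inversion, and pull back. Your computation of $\tilde g_2$ from $n=(\psi',-1)/\sqrt{1+\psi'^2}$ is correct, the $\rho_0$-bookkeeping is consistent with Remark \ref{rem:2.1}, and the boundedness of multiplication by Lipschitz functions on $H^{1/2}$ is the right tool.

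One point is internally inconsistent and should be repaired. You cannot require $\tilde v$ to be \emph{supported away from the lateral sides} of $Q$ and simultaneously have $\tilde v|_{y_2=0}=\tilde g_1$ on all of $I=(-\rho_0/M_0,\rho_0/M_0)$: a lateral cutoff forces the trace to vanish near $y_1=\pm\rho_0/M_0$, whereas \eqref{3-page8-par3} prescribes $v=g_1$ on the whole of $\Sigma\cap R_{\frac{\rho_0}{M_0},\rho_0}(P_0)$. Fortunately the lateral cutoff is not needed: since $\Phi(y_1,y_2)=(y_1,y_2+\psi(y_1))$ preserves the first coordinate, the lateral boundary of $\Phi(Q)$ lies on the lateral boundary of $R^-_{\frac{\rho_0}{M_0},\rho_0}(P_0)$, so the only internal interface created by the zero extension is the image of the bottom side $\{y_2=-\rho_0/2\}$. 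Hence it suffices to cut off in the $y_2$ variable only (after extending $\tilde g_1,\tilde g_2$ from $I$ to $\R$ by a bounded extension operator and applying the half-plane construction). With that correction the proof is complete and yields the constant depending only on $M_0$ as claimed.
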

\begin{proof} The proof follows the lines of the proof of Lemma
6.1 of \cite{A-R-R-V}.
\end{proof}
Let us define
\begin{equation}
    \label{1-page9-par3}
    \widetilde{u}= \left\{
    \begin{array}{ll}
        u, & \hbox{in } \Omega, \\
        & \\
        v & \hbox{in } R^-_{ \frac{\rho_0}{M_0}, \rho_0} (P_0),\\
    \end{array}
    \right.
\end{equation}
\begin{equation}
    \label{2-page9-par3}
    \Omega_1 = \Omega \cup \left ( \Sigma \cap R_{ \frac{\rho_0}{M_0}, \rho_0}
    (P_0) \right ) \cup R^-_{ \frac{\rho_0}{M_0}, \rho_0} (P_0).
\end{equation}
Since $u$ and $v$ share the same Dirichlet data $(g_1, g_2)$ on
$\Sigma$, we have that
\begin{equation}
    \label{3-page9-par3}
    \widetilde{u} \in H^2(\Omega_1).
\end{equation}
\begin{theo}
\label{theo:Extension}
There exist $\widetilde{f} \in L^2(\Omega_1)$, $\widetilde{F} \in
L^2(\Omega_1; \R^2)$, $\mathcal{F} \in L^2(\Omega_1;
\mathbb{M}^2)$ such that
\begin{equation}
    \label{4-page9-par3}
    \|\widetilde{f}\|_{L^2(\Omega_1)}+
    \frac{1}{\rho_0}\|\widetilde{F}\|_{L^2(\Omega_1;\R^2)} +
    \frac{1}{\rho_0^2}\|\mathcal{F}\|_{L^2(\Omega_1;
    \mathbb{M}^2)}\leq \frac{C\eta}{\rho_0^4}
\end{equation}
and $\widetilde{u}$ satisfies in the weak sense the equation
\begin{equation}
    \label{5-page9-par3}
    {\rm div}({\rm div} (
      {\mathbb P}\nabla^2 \widetilde{u}))=\widetilde{f} + {\rm div}\widetilde{F} + {\rm div}({\rm div} \mathcal{\widetilde{F}}), \quad\hbox{in }
      \Omega_1.
\end{equation}
Here, the constant $C$, $C>0$, only depends on $M_0$ and $\gamma$.
\end{theo}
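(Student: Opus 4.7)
The plan is to test the equation for $\widetilde{u}$ against an arbitrary $\varphi\in H^2_0(\Omega_1)$, split the integral into the $\Omega$-piece and the $R^-_{\rho_0/M_0,\rho_0}(P_0)$-piece, apply the weak Cauchy formulation of $u$ on $\Omega$, and then identify both the volume contribution coming from $v$ and the surviving boundary integral on $\Sigma$ as the action on $\varphi$ of suitable $L^2$ data $\widetilde{f}$, $\widetilde{F}$, $\widetilde{\mathcal{F}}$. Concretely, fix $\varphi\in H^2_0(\Omega_1)$; since $\operatorname{supp}\varphi\Subset\Omega_1$ stays away from $\partial\Omega\setminus\Sigma$, the restriction $\varphi|_\Omega$ belongs to $H^2_{co}(\Omega\cup\Sigma)$, so by \eqref{6-page7-par3},
\[
\int_\Omega\mathbb{P}\nabla^2 u\cdot\nabla^2\varphi
= -\int_\Sigma\bigl(\widehat{M}_{\tau,s}\varphi+\widehat{M}_n\varphi_n\bigr),
\]
while the $R^-$-piece is rewritten directly as $\int_{\Omega_1}\widetilde{\mathcal{F}}_0\cdot\nabla^2\varphi$ with $\widetilde{\mathcal{F}}_0:=\chi_{R^-}\mathbb{P}\nabla^2 v$. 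By Lemma \ref{lem:traces}, \eqref{2-page6-par3}, and \eqref{eq:3.bound_quantit}, this $\widetilde{\mathcal{F}}_0$ belongs to $L^2(\Omega_1;\mathbb{M}^2)$ with $\rho_0^{-2}\|\widetilde{\mathcal{F}}_0\|_{L^2(\Omega_1;\mathbb{M}^2)}\le C\eta/\rho_0^4$, $C$ depending only on $M_0$ and $\gamma$.

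The remaining task is to express the two boundary integrals over $\Sigma$ in the form $\int_{\Omega_1}\widetilde{f}_1\varphi-\int_{\Omega_1}\widetilde{F}_1\cdot\nabla\varphi+\int_{\Omega_1}\widetilde{\mathcal{F}}_1\cdot\nabla^2\varphi$. I first integrate by parts along $\Sigma$: since $\varphi|_\Sigma$ has compact support inside $\Sigma\cap R_{\rho_0/M_0,\rho_0}(P_0)$, no endpoint term appears and $\int_\Sigma\widehat{M}_{\tau,s}\varphi=-\int_\Sigma\widehat{M}_\tau\varphi_s$. Thus the whole boundary contribution reduces to the linear functional
\[
L(\varphi):=\int_\Sigma\widehat{M}_\tau\varphi_s-\int_\Sigma\widehat{M}_n\varphi_n,
\]
which is continuous on $H^2_0(\Omega_1)$: the $H^{-1/2}$--$H^{1/2}$ duality on $\Sigma$, together with the uniform continuity of the trace operators $\varphi\mapsto(\varphi_s|_\Sigma,\varphi_n|_\Sigma)\colon H^2_0(\Omega_1)\to H^{1/2}(\Sigma)\times H^{1/2}(\Sigma)$ (whose norm depends only on $M_0$), gives $|L(\varphi)|\le C\|\widehat{M}\|_{H^{-1/2}(\Sigma;\mathbb{R}^2)}\|\varphi\|_{H^2(\Omega_1)}\le C(\eta/\rho_0^2)\|\varphi\|_{H^2(\Omega_1)}$. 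A standard Hahn--Banach/Riesz argument, applied to the isometric embedding $\varphi\mapsto(\varphi,\nabla\varphi,\nabla^2\varphi)$ of $H^2_0(\Omega_1)$ into $L^2(\Omega_1)\times L^2(\Omega_1;\mathbb{R}^2)\times L^2(\Omega_1;\mathbb{M}^2)$, then produces representatives $\widetilde{f}_1,\widetilde{F}_1,\widetilde{\mathcal{F}}_1$ of $L$ whose norms are controlled by $\|L\|_{(H^2_0)^*}$, hence by $C\eta/\rho_0^4$ in the normalization of \eqref{4-page9-par3}. Setting $\widetilde{f}:=\widetilde{f}_1$, $\widetilde{F}:=\widetilde{F}_1$ and $\widetilde{\mathcal{F}}:=\widetilde{\mathcal{F}}_0+\widetilde{\mathcal{F}}_1$ will then yield the weak form of \eqref{5-page9-par3} together with \eqref{4-page9-par3}.

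The main technical point, and the one that will require the most care, is keeping all constants uniform: every estimate above must depend only on $M_0$ and $\gamma$ and not on $\rho_0$. This forces a careful bookkeeping of the dimensional normalization of Remark \ref{rem:2.1}, so that the $\rho_0^{-2}$ weight carried by $\|\widehat{M}\|_{H^{-1/2}(\Sigma;\mathbb{R}^2)}$ in \eqref{2-page6-par3} correctly matches the $\rho_0$-scaling gap between a 1D boundary integral on $\Sigma$ and a 2D volume integral on $\Omega_1$ in the Riesz representation. The $C^{1,1}$ regularity of $\Sigma$ with constants $\rho_0,M_0$ (used through \eqref{1-page6-par3}) is exactly what is needed to keep the tangential-derivative and trace operators uniformly bounded in these normalized norms, thereby producing the final constant $C>0$ depending only on $M_0$ and $\gamma$ as claimed.
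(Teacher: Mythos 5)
Your proposal is correct and follows essentially the same route as the paper: split the test integral over $\Omega_1$ into the $\Omega$-piece (handled by the weak Cauchy formulation \eqref{6-page7-par3}) and the $R^-$-piece (absorbed into $\widetilde{\mathcal{F}}$ via Lemma \ref{lem:traces}), then represent the residual boundary functional on $\Sigma$ as an element of $H^{-2}(\Omega_1)$ in the form $\widetilde{f}+{\rm div}\widetilde{F}+{\rm div}({\rm div}\,\widetilde{\mathcal{F}})$ with norm controlled by $C\eta/\rho_0^2$. The only (immaterial) differences are that the paper keeps $\widehat{M}_{\tau,s}$ in the $H^{-3/2}$--$H^{3/2}$ duality rather than integrating by parts along $\Sigma$, and represents the functional by Riesz in $H^2_0(\Omega_1)$ (obtaining a single potential $f$ and setting $f_1=f/\rho_0^2$, $F_1=-\nabla f$, $\mathcal{F}_1=\rho_0^2\nabla^2 f$) instead of your Hahn--Banach/Riesz argument on the product of $L^2$ spaces.
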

\begin{proof}
Let $\varphi$ be an arbitrary test function in $H^2_0(\Omega_1)$.
It is clear that $\varphi|_{\Omega} \in H_{co}^2(\Omega \cup
\Sigma)$. Denoting for simplicity $R^- =R^-_{ \frac{\rho_0}{M_0},
\rho_0} (P_0)$, by \eqref{6-page7-par3} we have
\begin{equation}
    \label{1-page10-par3}
    \int_{\Omega_1} \mathbb{P} \nabla^2 \widetilde{u} \cdot
    \nabla^2 \varphi =
    - \int_\Sigma ( \widehat{M}_{\tau,s}\varphi +\widehat{M}_n
    \varphi_{,n}) + \int_{R^-} \mathbb{P} \nabla^2 v \cdot
    \nabla^2 \varphi.
\end{equation}
Let us define the functional $\Psi: H_0^2(\Omega_1) \rightarrow
\R$ as
\begin{equation}
    \label{2-page10-par3}
    \Psi(\varphi) = \int_\Sigma ( \widehat{M}_{\tau,s}\varphi +\widehat{M}_n
    \varphi_{,n}) = \rho_0 \left ( \frac{1}{\rho_0} \int_\Sigma ( \widehat{M}_{\tau,s}\varphi +\widehat{M}_n
    \varphi_{,n}) \right ).
\end{equation}
By standard trace embedding and by \eqref{2-page6-par3}, we have
\begin{multline}
    \label{1-page11-par3}
    |\Psi(\varphi)| \leq \rho_0 \left (
    \|\widehat{M}_{\tau,s}\|_{H^{- \frac{3}{2}}(\Sigma)}
    \|\varphi\|_{H^{\frac{3}{2}}(\Sigma)}+
    \|\widehat{M}_n\|_{H^{-\frac{1}{2}}(\Sigma)} \|\varphi_{,n}\|_{H^{\frac{1}{2}}(\Sigma)}
    \right )
    \leq
    \\
    \leq
    C \|\widehat{M}\|_{H^{-\frac{1}{2}}(\Sigma)} \|\varphi\|_{H_0^{2}(\Omega_1)}
    \leq
    \frac{C\eta}{\rho_0^2} \|\varphi\|_{H_0^{2}(\Omega_1)},
\end{multline}
where $C$, $C>0$, only depends on $M_0$.  Therefore, $\Psi \in
H^{-2}(\Omega_1)$ and
\begin{equation}
    \label{2-page11-par3}
    \|\Psi\|_{H^{-2}(\Omega_1)} \leq \frac{C\eta}{\rho_0^2}.
\end{equation}
By the well-known Riesz Representation Theorem in Hilbert spaces,
we can find $f \in H_0^2(\Omega_1)$ such that $\Psi(\varphi) =
<\varphi, f>_{H_0^2(\Omega_1)}$ for every $\varphi \in
{H_0^2(\Omega_1)}$ and
\begin{equation}
    \label{2BIS-page11-par3}
    \|\Psi\|_{H^{-2}(\Omega_1)} = \|f\|_{H_0^2(\Omega_1)}.
\end{equation}
Let us set
\begin{equation}
    \label{3-page11-par3}
    f_1 = \frac{f}{\rho_0^2}, \quad F_1= -\nabla f, \quad
    {\mathcal{F}_1} = \rho_0^2 \nabla^2 f.
\end{equation}
Then
\begin{equation}
    \label{4-page11-par3}
    \rho_0\|f_1\|_{L^2(\Omega_1)}+
    \|F_1\|_{L^2(\Omega_1; \R^2)} +
    \rho_0^{-1}\|\mathcal{F}_1\|_{L^2(\Omega_1;
    \mathbb{M}^2)}\leq \frac{C\eta}{\rho_0^3}.
\end{equation}
By \eqref{1-page10-par3}
\begin{equation}
    \label{1-page12-par3}
    \int_{\Omega_1} \mathbb{P} \nabla^2 \widetilde{u} \cdot
    \nabla^2 \varphi = \int_{R^-} \mathbb{P} \nabla^2 v \cdot
    \nabla^2 \varphi - \int_{\Omega_1}f_1\varphi +
    \int_{\Omega_1}F_1 \cdot \nabla \varphi - \int_{\Omega_1}
    \mathcal{F}_1 \cdot \nabla^2 \varphi,
\end{equation}
for every $\varphi \in H_0^2(\Omega_1)$. Denoting
\begin{equation}
    \label{2-page12-par3}
    \widetilde{f}=-f_1, \quad \widetilde{F}=-F_1, \quad
    \mathcal{ \widetilde{F} }  = \left\{
    \begin{array}{lr}
        - \mathcal{F}_1, & \hbox{in } \Omega_1, \\
        \mathbb{P}\nabla^2 v -  \mathcal{F}_1,  &\hbox{in } R^{-},
    \end{array}
    \right.
\end{equation}
we obtain \eqref{5-page9-par3}. By \eqref{2-page12-par3},
\eqref{3-page11-par3}, \eqref{eq:3.bound_quantit},
\eqref{2-page11-par3}, \eqref{2BIS-page11-par3},
\eqref{5-page8-par3}, \eqref{2-page6-par3} we obtain
\eqref{4-page9-par3}.
\end{proof}

\begin{theo} [Propagation of smallness in the interior]
\label{theo:HPS}
Let $\Omega$ be a bounded domain in $\R^2$ satisfying \eqref{eq:M_1} and let $B_{r_0}(x_0)\subset\Omega$
be a fixed disc. Let $r$, $0<r\leq \frac{r_0}{2}$ be fixed and let $G\subset\Omega$ be a connected
open set such that ${\rm dist}(G,\partial\Omega)\geq r$ and $B_{\frac{r_0}{2}}(x_0)\subset G$.
Let $u\in H^2_{loc}(\Omega)$ be  a weak solution to the equation
\begin{equation}
   \label{1-page13-par3}
{\rm div}({\rm div} (
      {\mathbb P}\nabla^2 u_0))=f + {\rm div}F + {\rm div}({\rm div} \mathcal{F}), \quad\hbox{in } \Omega
\end{equation}
where $\mathbb{P}$, defined by \eqref{3.P}, satisfies
\eqref{eq:sym-conditions-C-components},
   \eqref{eq:3.bound_quantit}, \eqref{eq:3.convex} and the dichotomy condition
in $G$. Let $f$, $F$, $\mathcal{F}$ satisfy \eqref{3.bound_inhom}.
Let us assume that
\begin{equation}
   \label{2-page13-par3}
\|u\|_{L^2(B_{r_0}(x_0))}\leq\eta,
\end{equation}
\begin{equation}
   \label{3-page13-par3}
\|u\|_{L^2(\Omega)}\leq E_0,
\end{equation}
for given $\eta>0$, $E_0>0$. We have
\begin{equation}
   \label{4-page13-par3}
\|u\|_{L^2(G)}\leq C(\epsilon+\eta)^\delta(E_0+\epsilon+\eta)^{1-\delta},
\end{equation}
where
\begin{equation}
   \label{5-page13-par3}
   C=C_1\left(\frac{|\Omega|}{r^2}\right)^\frac{1}{2},
\end{equation}
\begin{equation}
   \label{6-page13-par3}
   \delta\geq\alpha^{\frac{C_2|\Omega|}{r^2}},
\end{equation}
with $C_1>0$ and $\alpha$, $0<\alpha<1$, only depending on
$\gamma$, $M$ and $\delta_1$, and with $C_2$ only
depending on $\gamma$ and $\delta_1$, where
$\delta_1=\min_{\overline{G}}{\mathcal{D}}$.
\end{theo}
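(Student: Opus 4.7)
This is a Nirenberg-type propagation of smallness, and my plan is to iterate the three sphere inequality of Theorem~\ref{theo:3sfere_completa} along a chain of overlapping balls inside $\Omega$ joining $B_{r_0/2}(x_0)$ to an arbitrary target point of $G$. I would first fix three radii $\rho_1<\rho_2<\rho_3$ comparable to $r$ and compatible with the constraint $\rho_1<\rho_2<k\rho_3$ of Theorem~\ref{theo:3sfere_completa}, with $\rho_3$ small enough that every ball of radius $\rho_3$ centered on the chain stays in $\Omega$ (guaranteed by $\mathrm{dist}(G,\partial\Omega)\geq r$). Given a target point $y^{\ast}\in G$, I would build a finite polygonal path $y_0,y_1,\dots,y_N=y^{\ast}$ inside $G$ with $y_0\in B_{r_0/2}(x_0)$ and spacing $|y_{i+1}-y_i|\leq\rho_2-\rho_1$, so that $B_{\rho_1}(y_{i+1})\subset B_{\rho_2}(y_i)$ and smallness propagates from one ball to the next. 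A standard covering/connectedness argument, together with the bound $|\Omega|\leq M_1\rho_0^2$ from \eqref{eq:M_1}, yields a uniform path-length bound $N\leq C_2|\Omega|/r^2$, with $C_2$ depending on the constants produced by Theorem~\ref{theo:3sfere_completa}.

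\textbf{Iteration.} Set $w_i=\|u\|_{L^2(B_{\rho_2}(y_i))}+\epsilon$ and $E=E_0+\epsilon$. Standard interior $H^4$ regularity for the fourth order equation \eqref{1-page13-par3} with $C^{1,1}$ coefficients gives
\begin{equation*}
\|u\|_{H^4(B_{\rho_3}(y_{i+1}))}+\epsilon\leq C\bigl(\|u\|_{L^2(\Omega)}+\epsilon\bigr)\leq CE,
\end{equation*}
with $C$ depending on $\gamma$, $M$, and $r$. Applying Theorem~\ref{theo:3sfere_completa} centered at $y_{i+1}$ with the chosen radii $\rho_1,\rho_2,\rho_3$ and using $\|u\|_{L^2(B_{\rho_1}(y_{i+1}))}+\epsilon\leq w_i$ yields the one-step interpolation
\begin{equation*}
w_{i+1}\leq C' w_i^{\alpha_0}E^{1-\alpha_0},
\end{equation*}
where $\alpha_0\in(0,1)$ is the exponent of Theorem~\ref{theo:3sfere_completa}, depending on $\gamma$, $M$, $\delta_1$ and the (fixed) ratios $\rho_2/\rho_1$, $\rho_3/\rho_2$. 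The starting value satisfies $w_0\leq\eta+\epsilon$ by \eqref{2-page13-par3}. Iterating $N$ times (a standard logarithmic manipulation) produces $w_N\leq C''(\eta+\epsilon)^{\alpha_0^N}E^{1-\alpha_0^N}$, and in particular $\|u\|_{L^2(B_{\rho_2}(y^{\ast}))}\leq C''(\eta+\epsilon)^{\alpha_0^N}(E_0+\epsilon)^{1-\alpha_0^N}$.

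\textbf{Covering $G$ and conclusion.} Since $y^{\ast}\in G$ is arbitrary and the chain length is bounded by $N\leq C_2|\Omega|/r^2$ uniformly in $y^{\ast}$, one can cover $G$ by at most $C|\Omega|/r^2$ balls of radius $\rho_2$ of the above type. Summing the squared $L^2$ contributions yields
\begin{equation*}
\|u\|_{L^2(G)}\leq C_1\bigl(|\Omega|/r^2\bigr)^{1/2}(\eta+\epsilon)^{\alpha_0^N}(E_0+\epsilon)^{1-\alpha_0^N},
\end{equation*}
which matches \eqref{4-page13-par3} with $\delta=\alpha_0^N\geq\alpha^{C_2|\Omega|/r^2}$ upon setting $\alpha=\alpha_0$. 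The only genuine technical step beyond invoking Theorem~\ref{theo:3sfere_completa} is the interior $H^4$ regularity bound for the fourth order plate operator $\mathcal{L}$ with $C^{1,1}$ coefficients, needed to control all four derivatives of $u$ on $B_{\rho_3}(y_{i+1})$ by the global $L^2$ datum $E_0$ and the inhomogeneity $\epsilon$; this is obtained by a Caccioppoli-type estimate tailored to $\mathcal{L}$, taking care to keep the dependence on $r$ explicit so that the final prefactor has the stated form $C_1(|\Omega|/r^2)^{1/2}$. Everything else is bookkeeping.
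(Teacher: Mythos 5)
Your overall route is the one the paper intends: its proof of Theorem \ref{theo:HPS} is a one-line reference to the iterated three sphere inequality as in \cite[Proof of Theorem 5.1]{A-R-R-V}, and your chain-of-balls construction, the length bound $N\leq C|\Omega|/r^2$, the geometric decay $\delta=\alpha_0^N$ of the exponent, and the final covering/summation giving the prefactor $(|\Omega|/r^2)^{1/2}$ all match that scheme. The dependence of the one-step constants only on the fixed ratios $\rho_2/\rho_1$, $\rho_3/\rho_2$ (hence not on $r$) is also handled correctly.

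There is, however, one step that fails as written: the claimed interior bound $\|u\|_{H^4(B_{\rho_3}(y_{i+1}))}+\epsilon\leq C(\|u\|_{L^2(\Omega)}+\epsilon)$. The right-hand side of \eqref{1-page13-par3} contains ${\rm div}({\rm div}\,\mathcal{F})$ with $\mathcal{F}$ merely in $L^2$, so the datum lies only in $H^{-2}$ and elliptic regularity yields no more than $u\in H^2_{loc}$ — consistent with the hypothesis $u\in H^2_{loc}(\Omega)$ of the theorem. Thus $u$ need not belong to $H^4$ at all, and you cannot invoke Theorem \ref{theo:3sfere_completa} directly (it assumes $u\in H^4$ and its right-hand side involves $\|u\|_{H^4(B_{r_3})}$). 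The repair is the decomposition already used in the proof of Theorem \ref{theo:3sfere_completa}: on each ball of the chain, subtract the solution $u_0$ of the inhomogeneous Dirichlet problem, which satisfies $\|u_0\|_{H^2_0}\leq C\epsilon$ by \eqref{3.u_0bound}; the remainder $v=u-u_0$ solves the \emph{homogeneous} plate equation, so interior regularity (legitimate now, with $C^{1,1}$ coefficients) gives $v\in H^4_{loc}$ with $\|v\|_{H^4}$ controlled by $\|v\|_{L^2}\leq E_0+C\epsilon$ on a slightly larger ball, and Theorem \ref{theo:13-4.3} applies to $v$. Transferring back to $u$ costs only $O(\epsilon)$ at each step, which is absorbed into your $w_i=\|u\|_{L^2}+\epsilon$. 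With this substitution your iteration goes through unchanged.
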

\begin{proof}
The proof is essentially based on an iterated application of the three sphere inequality, see
\cite[Proof of Theorem 5.1]{A-R-R-V} for details.
\end{proof}

\begin{theo} [Local stability for the Cauchy problem]
\label{theo:LSC}
Let $u\in H^2(\Omega)$ be  a weak solution to the Cauchy problem
\eqref{eq:1-page7-par3}--\eqref{eq:5-page7-par3},
where $\mathbb{P}$, defined by \eqref{3.P}, satisfies \eqref{eq:sym-conditions-C-components},
   \eqref{eq:3.bound_quantit}, \eqref{eq:3.convex} and the dichotomy condition
in the rectangle $R_{\frac{\rho_0}{M_0},\rho_0}(P_0)$, $\Sigma$
satisfies \eqref{1-page6-par3}, $f$, $F$, $\mathcal{F}$ satisfy
\eqref{3.bound_inhom}, and $g_1$, $g_2$, $\widehat{M}$ satisfy
\eqref{2-page6-par3}. Assuming the a priori bound
\begin{equation}
\label{1-page14-par3}
\|u\|_{L^2(\Omega)}\leq E_0,
\end{equation}
then
\begin{equation}
\label{2-page14-par3}
\|u\|_{L^2\left(R_{\frac{\rho_0}{2M_0},\frac{\rho_0}{2}}(P_0)\cap\Omega\right)}\leq
C(\epsilon+\eta)^\delta(E_0+\epsilon+\eta)^{1-\delta},
\end{equation}
where $C>0$ and $\delta$, $0<\delta<1$, only depend on
$\gamma$, $M$, $M_0$, $M_1$ and on
$\delta_1=\min_{\overline{\mathcal{O}}}{\mathcal{D}}$, where
$\mathcal{O}= R_{\frac{\rho_0}{M_0},\rho_0}(P_0)$.
\end{theo}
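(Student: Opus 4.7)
The plan is to reduce the Cauchy problem to an interior smallness propagation on an enlarged domain crossing $\Sigma$, following the Nirenberg-type strategy of \cite{NIR} as implemented in \cite{A-R-R-V}. First I invoke Lemma \ref{lem:traces} to extend the Cauchy data $(g_1, g_2)$ to a function $v \in H^2(R^-)$, with $R^- = R^-_{\frac{\rho_0}{M_0}, \rho_0}(P_0)$, matching $v = g_1$ and $\partial_n v = g_2$ on $\Sigma \cap R_{\frac{\rho_0}{M_0}, \rho_0}(P_0)$. By \eqref{5-page8-par3} and the smallness \eqref{2-page6-par3} of the Cauchy data, $\|v\|_{H^2(R^-)} \le C\eta/\rho_0$. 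The glued function $\widetilde u$ defined in \eqref{1-page9-par3} then lies in $H^2(\Omega_1)$, with $\Omega_1$ as in \eqref{2-page9-par3}, and by Theorem \ref{theo:Extension} it weakly satisfies \eqref{5-page9-par3} on $\Omega_1$ with right-hand side controlled by $C\eta/\rho_0^4$. Combining with the $\epsilon/\rho_0^4$ contribution coming from the original inhomogeneity, the full right-hand side verifies an analogue of \eqref{3.bound_inhom} with $\epsilon$ replaced by $C(\epsilon + \eta)$.

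Next I pick a disc $B_{r_0}(\overline{x})$ with $r_0$ a fixed fraction of $\rho_0/M_0$ and $\overline{x} \in R^-$ chosen so that $B_{r_0}(\overline{x}) \subset R^-$ and $\overline{x}$ stays at distance comparable to $\rho_0/M_0$ from both $\Sigma$ and $\partial R_{\frac{\rho_0}{M_0}, \rho_0}(P_0)$. On this disc $\widetilde u$ coincides with $v$, so
\[
\|\widetilde u\|_{L^2(B_{r_0}(\overline{x}))} \le \|v\|_{L^2(R^-)} \le C\eta.
\]
Moreover, the a priori bound \eqref{1-page14-par3} together with Lemma \ref{lem:traces} yields $\|\widetilde u\|_{L^2(\Omega_1)} \le E_0 + C\eta$.

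I then apply the interior propagation of smallness estimate, Theorem \ref{theo:HPS}, inside the working domain $\Omega_1 \cap R_{\frac{\rho_0}{M_0}, \rho_0}(P_0)$, on which the dichotomy condition is assumed to hold. The target connected open set $\widetilde G$ is chosen so as to contain both $B_{r_0/2}(\overline{x})$ and the target region $R_{\frac{\rho_0}{2M_0}, \frac{\rho_0}{2}}(P_0) \cap \Omega$, and to stay at distance at least $c\rho_0/M_0$ from the boundary of the working domain, with $c$ depending only on $M_0$. Such a $\widetilde G$ exists because the target rectangle lies strictly inside $R_{\frac{\rho_0}{M_0}, \rho_0}(P_0)$ and can be joined to $B_{r_0/2}(\overline{x})$ through a tubular neighborhood crossing $\Sigma$. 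Theorem \ref{theo:HPS}, applied with these choices and with $\eta$, $E_0$, $\epsilon$ replaced by $C\eta$, $E_0 + C\eta$, $C(\epsilon + \eta)$ respectively, gives
\[
\|\widetilde u\|_{L^2(\widetilde G)} \le C(\epsilon + \eta)^\delta (E_0 + \epsilon + \eta)^{1-\delta},
\]
with $C$ and $\delta \in (0,1)$ depending only on $\gamma$, $M$, $M_0$, $M_1$, $\delta_1$. Since $u = \widetilde u$ on $R_{\frac{\rho_0}{2M_0}, \frac{\rho_0}{2}}(P_0) \cap \Omega \subset \widetilde G$, the estimate \eqref{2-page14-par3} follows.

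The technically delicate step is the geometric construction of $\widetilde G$. The number of iterated three-sphere applications implicit in Theorem \ref{theo:HPS} scales as $|\widetilde G|/r^2$, so the diameter and the distance-to-boundary of $\widetilde G$ must be controlled purely in terms of $M_0$ and $M_1$ in order that the exponent $\delta$ remain uniform. This is a standard but careful patching argument, essentially the same as in the analogous Cauchy stability result in \cite{A-R-R-V}. All other ingredients, namely the extension across $\Sigma$, the smallness of $\widetilde u$ on $B_{r_0}(\overline{x})$, and the control of the inhomogeneous data, follow directly from Lemma \ref{lem:traces} and Theorem \ref{theo:Extension}.
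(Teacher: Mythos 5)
Your proposal is correct and follows essentially the same route as the paper: extend the Cauchy data across $\Sigma$ via Lemma \ref{lem:traces} and Theorem \ref{theo:Extension}, observe that $\widetilde u$ is of size $\eta$ on a disc contained in $R^-_{\frac{\rho_0}{M_0},\rho_0}(P_0)$, and then propagate smallness with Theorem \ref{theo:HPS} applied in the rectangle with target set containing $R_{\frac{\rho_0}{2M_0},\frac{\rho_0}{2}}(P_0)\cap\Omega$. The paper's proof differs only in making the starting disc explicit, taking $r_0=\frac{\rho_0}{2(\sqrt{1+M_0^2}+1)}$ and $x_0=(0,r_0-\frac{\rho_0}{2})$.
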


\begin{proof}
Representing locally $\Omega$ in a neighborhood of $P_0$ as
\begin{equation*}
\Omega\cap R_{\frac{\rho_0}{M_0},\rho_0}(P_0)=\{(x_1,x_2)\in R_{\frac{\rho_0}{M_0},\rho_0}\ |\ x_2>\psi(x_1)\},
\end{equation*}
let
\begin{equation*}
r_0=\frac{\rho_0}{2(\sqrt{1+M_0^2}+1)},
\end{equation*}
\begin{equation*}
x_0=\left(0,r_0-\frac{\rho_0}{2}\right).
\end{equation*}
We have that
\begin{equation*}
B_{r_0}(x_0)\subset R^-_{\frac{\rho_0}{2M_0},\frac{\rho_0}{2}}(P_0),
\end{equation*}
so that
\begin{equation*}
\|u\|_{L^2(B_{r_0}(x_0))}\leq C\eta.
\end{equation*}
The thesis easily follows by applying Theorem \ref{theo:HPS}
with $\Omega=R_{\frac{\rho_0}{M_0},\rho_0}(P_0)$, $G=R_{\frac{\rho_0}{2M_0},\frac{\rho_0}{2}}(P_0)$,
$h=\frac{r_0}{2}$.
\end{proof}

\section{Carleman estimate for second order elliptic operators\label{Sec4.1}}

In this and in the next section we consider $n\geq2$, where $n$ is the
space dimension. Moreover, in this section we use a notation for euclidean norm and scalar product which differs
{}from the standard one used in the other sections.

Let
\begin{equation}
\label{1-1}
Pu=\partial_{i}(g^{ij}(x)\partial_{i}u)
\end{equation}
where $\{{ g^{ij}(x)}\} _{i,j=1}^{n}$ is a symmetric matrix valued
function which satisfies a uniform ellipticity condition and whose
entries are Lipschitz continuous functions. In order to simplify
the calculations, in the sequel we shall use some standard
notations in Riemannian geometry, but always dropping the
corresponding volume element in the definition of the
Laplace-Beltrami metric. More precisely, denoting by
$g(x)=\{g_{ij}(x)\}_{i,j=1}^{n}$ the inverse of the matrix
$\{g^{ij}(x)\}_{i,j=1}^{n}$ we have $g^{-1}(x)=\{g^{ij}(x)
\}_{i,j=1}^{n}$ and we use the following notation when considering
either a smooth function $v$ or two vector fields $\xi $ and
$\eta$

i. $\xi \cdot \eta =\sum\limits_{i,j=1}^{n}g_{ij}(x)\xi _{i}\eta
_{j}$, \quad $| \xi|^{2}=\sum\limits_{i,j=1}^{n}g_{ij}(x)\xi _{i}\xi
_{j},$

ii. $\nabla v=( \partial_{1}v,...\partial _{n}v) $, \quad $\nabla
_{g}v(x)=g^{-1}(x)\nabla v(x)$, \\ $\divrg(\xi)=\sum\limits_{i=1}^{n}\partial_{i}\xi _{i},
\quad \Delta_{g}v=\divrg(\nabla_{g}v)$,

iii. $(\xi ,\eta )_{n}=\sum\limits_{i=1}^{n}\xi _{i}\eta _{i}$, $
| \xi| _{n}^{2}=\sum\limits_{i=1}^{n}\xi _{i}^{2}$.

\bigskip

With this notation the following formulae hold true when $u$, $v$
and $w$ are smooth functions
\begin{equation}
\label{1-2} Pu=\Delta _{g}u\text{, }\quad \Delta _{g}\left( v^{2}\right)
=2v\Delta _{g}v+2\left\vert \nabla _{g}v\right\vert ^{2}
\end{equation}%
and
\begin{equation}
\label{2-2}\int_{\mathbb{R}^{n}}v\Delta
_{g}wdx=\int_{\mathbb{R}^{n}}w\Delta
_{g}vdx=-\int_{\mathbb{R}^{n}}\nabla _{g}v\cdot \nabla _{g}wdx.
\end{equation}%
We shall also use the following Rellich identity
\begin{eqnarray}
\label{3-2}2(B\cdot \nabla _{g}v)\Delta _{g}v =\divrg\left( 2(B\cdot
\nabla _{g}v)\nabla _{g}v-B|\nabla _{g}v| ^{2}\right)+\\+(\divrg B)
|\nabla _{g}v| ^{2}-2\partial _{i}B^{k}g^{ij}\partial
_{j}v\partial _{k}v+B^{k}\partial _{k}g^{ij}\partial _{i}v\partial
_{j}v\text{ ,}\nonumber
\end{eqnarray}%
where $B=(B^{1},...,B^{n})$ is a smooth vector field.

We denote by
$w\in C^2(\mathbb{R}^{n}\setminus\{0\})$ a function that we shall
choose later on such that $w(x)>0$ and $|\nabla _{g}w|>0$ in
$\mathbb{R}^{n}\setminus\{0\}$.

Given $f\in C^\infty(\mathbb{R}^{n}\setminus \{0\})$, let us set
\begin{equation}
\label{10-3}P_{\tau}(f)=w^{-\tau}P(w^{\tau}f ),
\end{equation}
\begin{equation}
    \label{50-3}
A_w(f)=\frac{w}{|\nabla_{g}w|}
\partial_{Y} f+\frac{1}{2}F_{w}^gf,
\end{equation}
where
\begin{equation}
\label{20-3} F_{w}^{g}=\frac{w\Delta
_{g}w-|\nabla_{g}w|^2}{|\nabla_{g}w|^2},
\end{equation}

\begin{equation}
\label{30-3}Y=\frac{\nabla_{g}w}{|\nabla_{g}w|},
\end{equation}

\begin{equation}
\label{40-3}\partial_Y f=\nabla_g f\cdot Y.
\end{equation}
\bigskip

With the notation introduced above we have%
\begin{equation}
\label{1-3}P_{\tau }(f)=P_{\tau }^{(s)}(f)+P_{\tau }^{(a)}(f),
\end{equation}%
where $P_{\tau }^{(s)}$ and $P_{\tau }^{(a)}$ are the symmetric
and the antisymmetric part of the operator $P_{\tau }$ with
respect to the $L^2$ scalar product, respectively. \\More precisely
we have
\begin{equation}
\label{2-3}P_{\tau }^{(s)}(f)=\Delta _{g}f+\tau
^{2}\frac{\left\vert \nabla _{g}w \right\vert ^{2}}{w^2}f
\end{equation}%
and%

\begin{equation}
\label{3-3}P_{\tau }^{(a)}(f)=2\tau \frac{\left\vert \nabla _{g}w
\right\vert ^{2}}{w^2} A_w(f).
\end{equation}%

Moreover, let us denote by $S^g_{w}$ the symmetric matrix
$S^g_{w}=\{S_{w}^{g,ij}\}_{i,j=1}^{n}$, where
\begin{equation}
\label{1-4} S_{w}^{g,ij}=\frac{1}{2}\left((\divrg B)-F_{w}^g)
g^{ij}\\-\partial _{k}B^{j}g^{ki}-\partial
_{k}B^{i}g^{kj}+B^{k}\partial _{k}g^{ij}\right),
\end{equation}
with
\begin{equation}
\label{2-4} B= \frac{w}{\left\vert \nabla _{g}w \right\vert
}Y=\frac{w\nabla _{g}w }{\left\vert \nabla _{g}w \right\vert
^{2}}.
\end{equation}
We also denote
\begin{equation}
\label{3-4} \mathcal{M}_{w}^{g}=S_{w}^g g.
\end{equation}
Notice that
\begin{equation}
\label{4-4} \mathcal{M}_{w}^{g}\xi \cdot \eta =\xi \cdot
\mathcal{M}_{w}^{g}\eta, \quad \text{for every }\xi,\eta \in
\mathbb{R}^{n}
\end{equation}%
and, letting $\xi _{g}=g^{-1}\xi $, $\eta _{g}=g^{-1}\eta $,
\begin{equation}
\label{5-4} \mathcal{M}_{w}^{g}\xi _{g}\cdot \eta _{g}=(S_{w}^g\xi
,\eta )_{n},\quad \text{for every }\xi,\eta \in \mathbb{R}^{n}.
\end{equation}
\bigskip

The proof of the following lemma is straightforward.
\bigskip
\begin{lem}
\label{lem:1-4.1}Let $v\in C^2(\mathbb{R}^n\setminus\{0\})$ be a
function that satisfies the conditions $v(x)>0$,
$|\nabla_gv(x)|>0$ for every $x\in\mathbb{R}^n\setminus\{0\}$. Let
$S_{v}^{g}$, $\mathcal{M}_{v}^{g}$, $F_{v}^{g}$ and $B$ be
obtained substituting $w$ with $v$ in the \eqref{1-4},
\eqref{3-4}, \eqref{20-3} and \eqref{2-4}, respectively.

Let $\varphi\in C^2(0,+\infty)$ be such that $\varphi(s)>0$,
$\varphi'(s)>0$, for every $s\in(0,+\infty) $. Let us denote
\begin{equation}
\label{1-5}\Phi(s)=\frac{\varphi(s)}{s\varphi'(s)}.
\end{equation}
We have
\begin{equation}
\label{2-5} \mathcal{M}_{v}^{g}\nabla_g v=S_{v}^g\nabla v=0,
\end{equation}
\begin{equation}
\label{3-5} F_{\varphi(v)}^{g}=\Phi(v)F_{v}^{g}-\Phi'(v)v ,
\end{equation}

\begin{equation}
\label{4-5} \mathcal{M}_{\varphi(v)}^{g}\xi \cdot \eta =
v\Phi'(v)\left(\xi\cdot\eta-
\frac{(\nabla_gv\cdot\xi)(\nabla_gv\cdot\eta)}{|\nabla_{g}v|^2}\right)+\Phi(v)\mathcal{M}_{v}^{g}
\xi \cdot\eta .
\end{equation}

\end{lem}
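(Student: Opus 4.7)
The plan is to verify each of the three identities by direct computation, exploiting the chain rule applied to $\varphi(v)$ together with the specific form $B_w=w\nabla_g w/|\nabla_g w|^2$ from \eqref{2-4}. Two preliminary observations simplify the bookkeeping considerably: first,
\[
B_v^{i}\,\partial_{i}v=v,
\]
which follows from $B_v=v\nabla_g v/|\nabla_g v|^2$ and $g_{ij}(\nabla_g v)^{i}(\nabla_g v)^{j}=|\nabla_g v|^{2}$; second,
\[
B_{\varphi(v)}=\Phi(v)\,B_v,
\]
obtained by substituting $\varphi'(v)\nabla_g v$ for $\nabla_g\varphi(v)$ and $(\varphi'(v))^{2}|\nabla_g v|^{2}$ for $|\nabla_g\varphi(v)|^{2}$ in \eqref{2-4}, and recognizing the factor $\varphi(v)/(v\varphi'(v))=\Phi(v)$.

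For \eqref{3-5}, I apply the chain rule to get $\Delta_{g}\varphi(v)=\varphi'(v)\Delta_{g}v+\varphi''(v)|\nabla_g v|^{2}$, substitute into \eqref{20-3} with $w=\varphi(v)$, and divide numerator and denominator by $(\varphi'(v))^{2}|\nabla_g v|^{2}$. Using the elementary identity
\[
s\Phi'(s)=1-\Phi(s)-\frac{\varphi(s)\varphi''(s)}{(\varphi'(s))^{2}},
\]
which comes from differentiating $\Phi(s)=\varphi(s)/(s\varphi'(s))$, one collects terms to recognize $\Phi(v)F^{g}_{v}-v\Phi'(v)$ on the right-hand side.

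For \eqref{4-5}, the key step is to express $S^{g}_{\varphi(v)}$ as a $\Phi(v)$-multiple of $S^{g}_{v}$ plus a correction. Using $B_{\varphi(v)}=\Phi(v)B_v$, the product rule gives
\[
\partial_{k}B^{i}_{\varphi(v)}=\Phi'(v)(\partial_{k}v)B^{i}_{v}+\Phi(v)\,\partial_{k}B^{i}_{v},
\]
and a short calculation (invoking $B^{i}_{v}\partial_{i}v=v$) yields $\divrg B_{\varphi(v)}=\Phi(v)\divrg B_v+v\Phi'(v)$. Combining with \eqref{3-5} gives $\divrg B_{\varphi(v)}-F^{g}_{\varphi(v)}=\Phi(v)(\divrg B_v-F^{g}_{v})+2v\Phi'(v)$. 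Substituting all these into \eqref{1-4} with $w=\varphi(v)$ and grouping, the $\Phi(v)$-terms reassemble into $\Phi(v)S^{g,ij}_{v}$, while the remaining $\Phi'(v)$-terms collapse, after using the explicit form of $B_v$, to $v\Phi'(v)\bigl[g^{ij}-(\nabla_g v)^{i}(\nabla_g v)^{j}/|\nabla_g v|^{2}\bigr]$. Passing to $\mathcal{M}$ via \eqref{3-4} and contracting against $\xi,\eta$ (noting that $g_{il}(\nabla_g v)^{i}=\partial_{l}v$, so that $g_{ij}(\nabla_g v)^{i}\xi^{j}=\nabla_g v\cdot\xi$ in the $g$-inner product) gives \eqref{4-5}.

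Finally, \eqref{2-5} is obtained from the same ingredients: since $\mathcal{M}_{v}^{g}=S_{v}^{g}g$ and $g\,\nabla_g v=\nabla v$, the two equalities in \eqref{2-5} are equivalent, and $(S_{v}^{g}\nabla v)^{i}=S^{g,ij}_{v}\partial_{j}v=0$ is checked by expanding the four terms of \eqref{1-4} contracted with $\partial_{j}v$ and using $B^{j}_{v}\partial_{j}v=v$ (hence $\partial_{k}(B^{j}_{v}\partial_{j}v)=\partial_{k}v$) to obtain pairwise cancellations; the terms involving $\partial_{k}g^{ij}$ cancel symmetrically against the corresponding terms in $\divrg B_v$. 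The main (modest) obstacle throughout is purely the index bookkeeping: with both $g^{ij}$ and $g_{ij}$ in play and inner products taken in the $g$-metric, one must carefully distinguish $\partial_{j}v$ from $(\nabla_g v)^{j}$ and track each raising/lowering at every step.
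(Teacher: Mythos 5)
Your computation is correct and fills in exactly the verification that the paper omits (the paper simply declares the proof of this lemma ``straightforward''): the two preliminary identities $B_v\cdot\nabla v=v$ and $B_{\varphi(v)}=\Phi(v)B_v$, the identity $s\Phi'(s)=1-\Phi(s)-\varphi\varphi''/(\varphi')^2$, and the decomposition $S^{g,ij}_{\varphi(v)}=\Phi(v)S^{g,ij}_v+v\Phi'(v)\bigl[g^{ij}-(\nabla_gv)^i(\nabla_gv)^j/|\nabla_gv|^2\bigr]$ all check out and yield \eqref{3-5} and \eqref{4-5}. One small inaccuracy in the sketch of \eqref{2-5}: the term $B^k\partial_kg^{ij}\partial_jv$ actually cancels against the $\partial_kg^{ij}$ piece hidden inside $-\partial_kB^i(\nabla_gv)^k$ (after writing $B^i=\tfrac{v}{N}g^{ij}\partial_jv$), not against $\divrg B_v$; the latter instead combines with $-F^g_v$ and the $\partial_k(v/N)(\nabla_gv)^k$ term to give zero --- but this only affects the bookkeeping, not the validity of the direct-expansion strategy.
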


\bigskip
In the sequel we shall use the following notation

\begin{equation}
\label{1*-6}\nabla_g^N f=(\nabla_gv\cdot\nabla_g f) \frac{\nabla_g
v}{|\nabla_g v|^2}=(\partial_Y f\cdot Y)Y,
\end{equation}

\begin{equation}
\label{2*-6}\nabla_g^T f=  \nabla_g f-\nabla_g^N f,
\end{equation}
Notice that $\nabla_g^N f$ and $\nabla_g^T f$ are the normal
component and the tangential component (with respect to the
Riemannian metric $\{g_{ij}\}_{i,j=1}^n$) of $\nabla_g f$ to the
level surface of $w$ respectively. In particular $\nabla_g^N f$
and $\nabla_g^T f$ are invariant with respect to
transformations of the type $\widetilde{w}=\varphi(w)$, where
$\varphi$ satisfies the hypotheses of Lemma \ref{lem:1-4.1}. We
have
\bigskip
\begin{equation}
\label{3*-6}\nabla_g^T f\cdot Y=0,\quad \nabla_g f=\nabla_g^N
f+\nabla_g^T f,
\end{equation}
\begin{equation}
\label{4*-6} |\nabla_g f|^2=|\nabla_g^N f|^2 +|\nabla_g^T
f|^2=(\partial_Y f)^2+|\nabla_g^T f|^2,
\end{equation}
\begin{equation}
\label{5*-6} \nabla_g^N f\cdot\nabla_g^T f=0.
\end{equation}

\noindent
In addition, observe that by \eqref{4-4} and \eqref{2-5} we have
\begin{equation}
\label{1-6}\mathcal{M}_w^g\nabla_g f\cdot\nabla_g
f=\mathcal{M}_w^g\nabla_g^T f\cdot\nabla_g^T f.
\end{equation}%

\begin{lem}
\label{lem:2-4.1} Let $w\in C^2(\mathbb{R}^n\setminus\{0\})$ be
such that $w(x)>0$, $|\nabla_gw(x)|>0$ for every
$x\in\mathbb{R}^n\setminus\{0\}$. For every $\tau\neq 0$ we have
\begin{multline}
\label{2-6}
\frac{w^2}{|\nabla_gw|^2}\left(P_\tau(f)\right)^2=\frac{w^2}{|\nabla_gw|^2}\left(P^{(s)}_\tau(f)\right)^2
+4{\tau}^2\left(\partial_Y
f\right)^2\left(1+(2\tau)^{-1}F_{w}^{g}\right)+\\
+4\tau\left(\mathcal{M}_{w }^{g}\nabla_g^T f\cdot\nabla_g^T
f+\frac{1}{2}F_{w }^{g}|\nabla_g^T f|^2\right)-\\
-2\tau^3\frac{|\nabla_gw|^2}{w^2}F_{w
}^{g}\left(1+(2\tau)^{-1}F_{w}^{g}\right)f^2+ 2\tau F_{w }^{g}f
P_\tau(f)+ \divrg(q),
\end{multline}
where
\begin{eqnarray}
   \label{2-7}
q=\frac{2\tau w}{|\nabla_gw|}\left(2(\partial_Y f)\nabla_g
f-|\nabla_g f|^2Y+\tau^2f^2\frac{|\nabla_gw|^2}{w^2}Y\right).
\end{eqnarray}
\end{lem}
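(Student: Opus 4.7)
The identity is purely algebraic and rests on the symmetric/antisymmetric decomposition \eqref{1-3}. My plan is to expand
\begin{equation*}
\frac{w^2}{|\nabla_g w|^2}(P_\tau f)^2 = \frac{w^2}{|\nabla_g w|^2}(P_\tau^{(s)} f)^2 + \frac{w^2}{|\nabla_g w|^2}(P_\tau^{(a)} f)^2 + 2\,\frac{w^2}{|\nabla_g w|^2}\,P_\tau^{(s)}(f)\,P_\tau^{(a)}(f).
\end{equation*}
The first term already appears on the right-hand side of \eqref{2-6}. The second is computed directly from \eqref{3-3} and \eqref{50-3}, producing
\begin{equation*}
\tfrac{w^2}{|\nabla_g w|^2}(P_\tau^{(a)} f)^2 = 4\tau^2(\partial_Y f)^2 + 4\tau^2\,\tfrac{|\nabla_g w|}{w}\,F_{w}^{g}\,f\,\partial_Y f + \tau^2\tfrac{|\nabla_g w|^2}{w^2}(F_{w}^{g})^2 f^2.
\end{equation*}
All the real work goes into the cross term.

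For the cross term I would use \eqref{2-3} and \eqref{3-3} to write
\begin{equation*}
2\,\tfrac{w^2}{|\nabla_g w|^2}\,P_\tau^{(s)}(f)\,P_\tau^{(a)}(f) = 4\tau\,A_w(f)\,\Delta_g f + 4\tau^3\,\tfrac{|\nabla_g w|^2}{w^2}\,f\,A_w(f),
\end{equation*}
and then split $A_w(f)$ according to \eqref{50-3}. The principal piece $4\tau\,\tfrac{w}{|\nabla_g w|}(\partial_Y f)\,\Delta_g f$ equals $4\tau\,(B\cdot\nabla_g f)\,\Delta_g f$ with $B$ the vector field \eqref{2-4}. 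Applying the Rellich identity \eqref{3-2} (with prefactor $2\tau$) rewrites this as the divergence of $\tfrac{2\tau w}{|\nabla_g w|}\bigl(2(\partial_Y f)\nabla_g f - |\nabla_g f|^2 Y\bigr)$, which supplies the first two constituents of the claimed $q$ in \eqref{2-7}, plus $2\tau$ times the quadratic form $(\divrg B)|\nabla_g f|^2 - 2\partial_i B^k g^{ij}\partial_j f\partial_k f + B^k\partial_k g^{ij}\partial_i f\partial_j f$. By direct comparison with \eqref{1-4} this quadratic form equals $2(S_w^g\nabla f,\nabla f)_n + F_{w}^{g}|\nabla_g f|^2$, and then \eqref{5-4}, \eqref{1-6} and the splitting \eqref{4*-6} convert it into $4\tau\bigl(\mathcal{M}_w^g\nabla_g^T f\cdot\nabla_g^T f + \tfrac{1}{2}F_{w}^{g}|\nabla_g^T f|^2\bigr) + 2\tau F_{w}^{g}(\partial_Y f)^2$; the last summand combines with the $4\tau^2(\partial_Y f)^2$ from $(P_\tau^{(a)} f)^2$ to give the factor $4\tau^2\bigl(1+(2\tau)^{-1}F_{w}^{g}\bigr)$ of \eqref{2-6}.

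Three scalar summands of the cross term remain: $2\tau F_{w}^{g} f\,\Delta_g f$, $4\tau^3\tfrac{|\nabla_g w|}{w}\,f\,\partial_Y f$, and $2\tau^3\tfrac{|\nabla_g w|^2}{w^2}F_{w}^{g} f^2$. For the first I would use $\Delta_g f = P_\tau(f) - \tau^2\tfrac{|\nabla_g w|^2}{w^2}f - P_\tau^{(a)}(f)$ and expand $P_\tau^{(a)}(f)$ through \eqref{3-3}--\eqref{50-3}; this produces the term $2\tau F_{w}^{g} f\,P_\tau(f)$ exactly, plus counter-terms in $f\partial_Y f$, $f^2 F_{w}^{g}$ and $f^2(F_{w}^{g})^2$ that together with the contributions already collected from $(P_\tau^{(a)} f)^2$ and with the third summand above conspire to yield the coefficient $-2\tau^3\tfrac{|\nabla_g w|^2}{w^2}F_{w}^{g}\bigl(1+(2\tau)^{-1}F_{w}^{g}\bigr)$ of $f^2$ and to cancel the stray $f\partial_Y f$ contribution of $(P_\tau^{(a)} f)^2$. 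For the middle summand I would rewrite $f\partial_Y f = \tfrac{1}{2}\partial_Y(f^2) = \tfrac{1}{2|\nabla_g w|}\nabla_g w\cdot\nabla_g(f^2)$ and invoke the product-rule identity
\begin{equation*}
\divrg\!\Bigl(\tfrac{f^2}{w}\nabla_g w\Bigr) = \tfrac{1}{w}\nabla_g w\cdot\nabla_g(f^2) + \tfrac{|\nabla_g w|^2}{w^2}F_{w}^{g}\,f^2,
\end{equation*}
which follows at once from the definition \eqref{20-3} of $F_{w}^{g}$; this rewrites $4\tau^3\tfrac{|\nabla_g w|}{w}\,f\,\partial_Y f$ as $2\tau^3\divrg\bigl((f^2/w)\nabla_g w\bigr) - 2\tau^3\tfrac{|\nabla_g w|^2}{w^2}F_{w}^{g} f^2$, and the divergence produced is precisely $\divrg\bigl(\tfrac{2\tau w}{|\nabla_g w|}\,\tau^2 f^2\tfrac{|\nabla_g w|^2}{w^2}Y\bigr)$, i.e., the third constituent of $\divrg(q)$ in \eqref{2-7}.

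The main obstacle is not conceptual but combinatorial: roughly ten scalar contributions of orders $\tau$, $\tau^2$ and $\tau^3$ involving $\mathcal{M}_w^g$, $F_{w}^{g}$, $\partial_Y f$ and the normal/tangential splitting must combine so that only the terms on the right-hand side of \eqref{2-6} survive, and the two divergences that appear (the Rellich one and the $f^2$-rearrangement one) must glue together to form exactly $\divrg(q)$ for the $q$ of \eqref{2-7}. The Rellich identity \eqref{3-2}, the definition \eqref{1-4} of $S_w^g$, and its reformulation \eqref{5-4} in terms of $\mathcal{M}_w^g$ are precisely what is needed for this bookkeeping to close.
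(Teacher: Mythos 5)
Your proposal is correct and follows essentially the same route as the paper: the same symmetric/antisymmetric splitting, the Rellich identity \eqref{3-2} with $B=\frac{w\nabla_g w}{|\nabla_g w|^2}$ applied to the cross term, the substitution \eqref{3-8} for $\Delta_g f$ in the term $2\tau F_w^g f\Delta_g f$, and the rewriting of the $\tau^3$ contribution as $2\tau^3\divrg\left(\frac{f^2}{w}\nabla_g w\right)$, which is exactly the third constituent of $q$. The only difference is cosmetic bookkeeping in how the $\tau^3$ and $(F_w^g)^2f^2$ pieces are grouped before cancellation.
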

\begin{proof}
By \eqref{1-3} we have
\begin{multline}
\label{3-7} \frac{w^2}{|\nabla_gw|^2}\left(P_{\tau
}(f)\right)^2=\frac{w^2}{|\nabla_gw|^2}\left(P_{\tau
}^{(s)}(f)\right)^2+\\+2\frac{w^2}{|\nabla_gw|^2}P^{(s)}_\tau(f)P^{(a)}_\tau(f)+\frac{w^2}{|\nabla_gw|^2}\left(P_{\tau
}^{(a)}(f)\right)^2.
\end{multline}
Let us consider the second term at the right-hand side of
\eqref{3-7}. We have

\begin{multline}
\label{10-7}
2\frac{w^2}{|\nabla_gw|^2}P^{(s)}_\tau(f)P^{(a)}_\tau(f)=4\tau\left(\Delta_g
f+\tau^2\frac{|\nabla_gw|^2}{w^2}f\right)A_w(f)=\\
=4\tau\left(\frac{w\nabla_gw\cdot\nabla_gf}{|\nabla_gw|^2}\right)\Delta_g
f+2\tau F_{w}^{g}f\Delta_g
f+4\tau^3\frac{|\nabla_gw|^2}{w^2}A_w(f)f=\\=4\tau\left(\frac{w\nabla_gw\cdot\nabla_gf}{|\nabla_gw|^2}\right)\Delta_g
f+2\tau F_{w}^{g}f\Delta_g
f+2\tau^3\divrg\left(\frac{\nabla_gw}{w}f^2\right).
\end{multline}
Now we transform the term
$4\tau\left(\frac{w\nabla_gw\cdot\nabla_gf}{|\nabla_gw|^2}\right)\Delta_g
f$ by applying the Rellich identity \eqref{3-2} with
$B=\frac{w\nabla_gw}{|\nabla_gw|^2}$ and $v=f$. We obtain

\begin{multline}
\label{1-8}
2\frac{w^2}{|\nabla_gw|^2}P^{(s)}_\tau(f)P^{(a)}_\tau(f)=\\
=4\tau\mathcal{M}^g_w\nabla_g f\cdot\nabla_g f +2\tau
F_{w}^{g}|\nabla_g f|^{2}+2\tau F_{w}^{g} f\Delta_g f+\divrg(q),
\end{multline}
where $q$ is given by \eqref{2-7}.

Now we transform the third term at the right-hand side of
\eqref{1-8} by using the following trivial consequence of
\eqref{1-3}
\begin{equation}
\label{3-8}
\Delta_gf=P_\tau(f)-\tau^2\frac{|\nabla_gw|^{2}}{w^2}f-2\tau\frac{|\nabla_gw|^{2}}{w^2}A_w(f)
\end{equation}
and we obtain
\begin{multline}
\label{1-9} 2\tau F_{w}^{g} f\Delta_g f= 2\tau
F_{w}^{g}fP_\tau(f)-\\- 2\tau^3\frac{|\nabla_gw|^2}{w^2}F_{w
}^{g}\left(1+\frac{1}{\tau}F_{w}^{g}\right)f^2-
4\tau^2\frac{|\nabla_gw|}{w}F_{w}^{g}f\partial_Yf.
\end{multline}
Now, just spreading the square in the third term at
the right-hand side of \eqref{3-7}, we have
\begin{multline}
  \label{2-9}
\frac{w^2}{|\nabla_gw|^2}\left(P_{\tau }^{(a)}(f)\right)^2=\\
=4\tau^2(\partial_Yf)^2+\tau^2\frac{|\nabla_gw|^2}{w^2}\left(F_{w
}^{g}\right)^2f^2 +4\tau^2\frac{|\nabla_gw|}{w}F_{w
}^{g}f\partial_Yf,
\end{multline}
so that, by \eqref{4*-6}, \eqref{1-6}, \eqref{3-7}, \eqref{1-8},
\eqref{1-9} and \eqref{2-9} we obtain identity  \eqref{2-6}.
\end{proof}
\bigskip
In the sequel of this section we assume that the matrix $\{{
g^{ij}(x)}\} _{i,j=1}^{n}$ satisfies the following conditions
\begin{equation}
\label{1-10}
\lambda|\xi|_n^2\leq\sum_{i,j=1}^ng^{ij}(x)\xi_i\xi_j\leq\lambda^{-1}|\xi|_n^2,\quad
\text { for every }x\in\mathbb{R}^n, \xi\in\mathbb{R}^n
\end{equation}
and
\begin{equation}
\label{2-10}
\sum_{i,j=1}^n|g^{ij}(x)-g^{ij}(y)|\leq\Lambda|x-y|_n, \quad\text { for
every }x\in\mathbb{R}^n, y\in\mathbb{R}^n,
\end{equation}
where $\lambda\in(0,1]$ and $\Lambda>0$.
\bigskip
Now we introduce some additional notation that we shall use in the
sequel. Let $\Gamma=\{{ \gamma_{ij}}\} _{i,j=1}^{n}$ be a matrix
that we shall choose later on. We assume that
\begin{equation}
\label{4-10} m_\ast|x|_n^2\leq\left(\Gamma x,x\right)_n\leq
m^{\ast}|x|_n^2, \quad\text { for every }x\in\mathbb{R}^n,
\end{equation}
where $m_\ast$ and $m^{\ast}$ are the minimum and the maximum
eigenvalue of $\Gamma$ respectively, and $m_\ast>0$. Let us denote
\begin{equation}
\label{10-10} \sigma(x)=\left(\left(\Gamma
x,x\right)_n\right)^{1/2}
\end{equation}
and we denote
\begin{equation}
\label{*-11} S^{(0)}=S_{\sigma}^{g(0)},
\end{equation}
where we recall that
\begin{equation}
\label{1-11}
S_\sigma^{g(0),ij}=\frac{1}{2}\left((\divrg B_0)-F_{\sigma}^{g(0)})
g^{ij}(0)\\-\partial _{k}B_0^{j}g^{ki}(0)-\partial
_{k}B_0^{i}g^{kj}(0)\right)
\end{equation}
and
\begin{equation}
\label{10-11}
B_0=\{B_0^i\}_{i=1}^n=\left\{\frac{\sigma(x)g^{ij}(0)\partial_j\sigma(x)}{g^{lm}(0)\partial_l\sigma(x)
\partial_m\sigma(x)}\right\}_{i=1}^n,
\end{equation}
\bigskip
\begin{equation}
\label{20-11}
F_{\sigma}^{g(0)}=\frac{\sigma(x)g^{ij}(0)\partial^2_{ij}\sigma(x)-g^{ij}(0)\partial_i\sigma(x)\partial_j\sigma(x)}
{g^{ij}(0)\partial_i\sigma(x)\partial_j\sigma(x)}.
\end{equation}
\bigskip
Moreover, for any fixed $\xi\in\mathbb{R}^n$, $(S^{(0)}\xi,\xi)_n$
is an homogeneous function with respect to the $x$ variable of
degree $0$, hence the following number is well defined
\begin{equation}
\label{2-11}
\omega_0=\sup\left\{-(S^{(0)}\xi,\xi)_n\ |\ g^{ij}(0)\xi_i\xi_j=1,\
g^{ij}(0)\partial_i\sigma(x)\xi_j=0,\
x\in\mathbb{R}^n\setminus{0}\right\}.
\end{equation}

\bigskip

We observe that $\omega_0$ is a nonnegative number. More precisely
we have the following proposition.

\begin{prop}
\label{propRem} Let
$Q=\sqrt{g(0)}\Gamma^{-1}\sqrt{g(0)}$,
where $\sqrt{g(0)}$ is the positive square root of the
matrix $g(0)$. Let $\varrho_{\ast}$ and $\varrho^{\ast}$ be the
minimum and the maximum eigenvalues of the matrix $Q$
respectively. Then the following equality holds true
\begin{equation}
\label{1-11bis} \omega_0=\frac{\varrho^{\ast}}{\varrho_{\ast}}-1.
\end{equation}
\end{prop}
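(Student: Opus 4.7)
The strategy has three parts: reduce to Euclidean coordinates, compute the quadratic form in closed form, and then carry out a one-variable maximization. First, I would perform the linear change of variables $y = Tx$, where $T = (g(0))^{1/2}$ is the symmetric positive square root of the covariant metric at the origin. Under this substitution the contravariant metric at the origin is transported to $\tilde g^{ij}(0) = \delta^{ij}$, and the quadratic form $(\Gamma x,x)_n$ becomes $(\tilde\Gamma y,y)_n$ with $\tilde\Gamma = T^{-1}\Gamma T^{-1}$. A direct verification shows $Q = T\Gamma^{-1}T = \tilde\Gamma^{-1}$, whence the extremal eigenvalues of $\tilde\Gamma$ are $1/\varrho^{\ast}$ and $1/\varrho_{\ast}$. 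The quantity $\omega_0$ being constructed from geometric invariants (the metric, the level sets of $\sigma$, and the Riemannian orthogonality condition on $\xi$), this change of variables reduces the proof to establishing $\omega_0 = \mu^{\ast}/\mu_{\ast} - 1$ in the Euclidean setting, with $\mu_{\ast}\le\mu^{\ast}$ denoting the extremal eigenvalues of $\tilde\Gamma$ (which I shall henceforth again call $\Gamma$).

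In the Euclidean case one finds $B_0 = u_1\,\Gamma x/u_2$, where $u_k := (\Gamma^k x,x)_n$. A straightforward if tedious calculation yields
\begin{equation*}
\divrg B_0 - F_{\sigma}^{g(0)} \;=\; 4 - 2\,\frac{u_1 u_3}{u_2^2}.
\end{equation*}
Moreover, differentiating $B_0^j = u_1(\Gamma x)_j/u_2$ with respect to $x_k$ produces three terms; when contracted with $\xi_k\xi_j$ the two terms carrying a factor $(\Gamma x)_k$ or $(\Gamma x)_j$ vanish by virtue of the constraint $(\Gamma x)\cdot\xi = 0$, leaving $\partial_k B_0^j\,\xi_k\xi_j = u_1(\Gamma\xi,\xi)_n/u_2$. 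Substituting into the definition of $S^{(0)}$, one obtains the closed-form identity
\begin{equation*}
-(S^{(0)}\xi,\xi)_n \;=\; -2 + \frac{u_1 u_3}{u_2^2} + \frac{u_1(\Gamma\xi,\xi)_n}{u_2}
\end{equation*}
on the admissible set.

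The last step is the sharp one-variable maximization. The key tool is the elementary inequality $t^2 \le (\mu_{\ast} + \mu^{\ast})t - \mu_{\ast}\mu^{\ast}$, valid for $t \in [\mu_{\ast},\mu^{\ast}]$. Diagonalizing $\Gamma$ in an orthonormal eigenbasis and writing $x = \sum_i a_i e_i$, I apply this inequality to each eigenvalue $\mu_i$ and multiply by $\mu_i a_i^2 \ge 0$; summing over $i$ gives $u_3 \le (\mu_{\ast} + \mu^{\ast})u_2 - \mu_{\ast}\mu^{\ast}u_1$, i.e.\ $u_3/u_2 \le \mu_{\ast} + \mu^{\ast} - \mu_{\ast}\mu^{\ast}R$, where $R := u_1/u_2 \in [1/\mu^{\ast}, 1/\mu_{\ast}]$. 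Combining with the trivial bound $(\Gamma\xi,\xi)_n \le \mu^{\ast}$ yields
\begin{equation*}
-(S^{(0)}\xi,\xi)_n + 2 \;\le\; R(\mu_{\ast} + 2\mu^{\ast}) - \mu_{\ast}\mu^{\ast} R^2,
\end{equation*}
a downward parabola in $R$ whose vertex $(\mu_{\ast}+2\mu^{\ast})/(2\mu_{\ast}\mu^{\ast}) = 1/\mu_{\ast} + 1/(2\mu^{\ast})$ lies strictly to the right of $1/\mu_{\ast}$. Therefore the parabola is increasing throughout $[1/\mu^{\ast}, 1/\mu_{\ast}]$, with maximum $1 + \mu^{\ast}/\mu_{\ast}$ attained at $R = 1/\mu_{\ast}$. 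This proves the upper bound $\omega_0 \le \mu^{\ast}/\mu_{\ast} - 1$.

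Equality is realized by the choice $x = e_{\ast}$, $\xi = e^{\ast}$, where $e_{\ast}$ and $e^{\ast}$ are Euclidean-orthogonal unit eigenvectors of $\Gamma$ with respective eigenvalues $\mu_{\ast}$ and $\mu^{\ast}$: the constraint $(\Gamma x)\cdot\xi = \mu_{\ast}(x\cdot\xi)_n = 0$ holds, one has $u_k = \mu_{\ast}^k$ giving $u_1 u_3/u_2^2 = 1$, and $u_1(\Gamma\xi,\xi)_n/u_2 = \mu^{\ast}/\mu_{\ast}$, which sum to exactly $\mu^{\ast}/\mu_{\ast} - 1$. The main technical obstacle is the algebra behind the closed-form expression of the second paragraph, namely the cancellations in $\partial_k B_0^j\,\xi_k\xi_j$ and the simplification of $\divrg B_0 - F_{\sigma}^{g(0)}$; once the closed form is in hand, the remainder reduces to elementary one-variable calculus.
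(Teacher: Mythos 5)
Your proposal is correct; I checked the closed form $-(S^{(0)}\xi,\xi)_n = -2 + u_1u_3/u_2^2 + u_1(\Gamma\xi,\xi)_n/u_2$ and the identity $\divrg B_0 - F_\sigma^{g(0)} = 4 - 2u_1u_3/u_2^2$, and your extremizer is the same as the paper's. The route differs from the paper's in two respects. First, the paper does not normalize the metric at the outset: it states the closed form \eqref{2-11ter} directly in the original coordinates (with $K=\Gamma g^{-1}(0)\Gamma$) and only then substitutes $\eta=(\sqrt{g(0)})^{-1}\xi$, $y=(\sqrt{g(0)})^{-1}\Gamma x$, arriving at $H(y,\eta)=(Qy,y)_n\left((Q^{-1}y,y)_n+(Q^{-1}\eta,\eta)_n\right)-2$ on the set $|y|_n=|\eta|_n=1$, $(y,\eta)_n=0$ --- which is exactly your expression under the identifications $u_1=(Qy,y)_n$, $u_3=(Q^{-1}y,y)_n$, $u_2=1$. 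Your version pushes the normalization through $S^{(0)}$ itself; this works, but the invariance of $(S^{(0)}\xi,\xi)_n$ under $y=Tx$, $\tilde\xi=T^{-1}\xi$ (equivalently, that $S^{(0)}$ transforms as $TS^{(0)}T$) is asserted rather than checked, and since $S^{(0)}$ is given by the non-manifestly-covariant coordinate formula \eqref{1-11}, a few lines of verification are owed there. Second --- and this is the genuinely different part --- the upper bound: the paper writes $H=(\mathcal{A}X,X)_{2n}(\mathcal{A}^{-1}X,X)_{2n}-(Q\eta,\eta)_n(\mathcal{A}^{-1}X,X)_{2n}-2$ with $X=(y,\eta)^t$ and $\mathcal{A}=\mathrm{diag}(Q,Q)$ on $\R^{2n}$, bounds the first product by the Kantorovich inequality \eqref{4-11quater} and the middle term from below by $\varrho_{\ast}/\varrho^{\ast}+1$ via Schwarz. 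Your argument replaces this with the pointwise inequality $(t-\mu_{\ast})(t-\mu^{\ast})\leq 0$ summed against $\mu_i a_i^2$, yielding $u_3\leq(\mu_{\ast}+\mu^{\ast})u_2-\mu_{\ast}\mu^{\ast}u_1$, followed by maximizing a concave parabola in $R=u_1/u_2\in[1/\mu^{\ast},1/\mu_{\ast}]$; all steps there are sound (the vertex location and the endpoint value check out). This is more elementary --- it avoids invoking Kantorovich, whose standard proof is in any case the same quadratic trick --- and it decouples the $x$-part from the $\xi$-part, at the small cost of the coordinate-invariance point above.
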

\begin{proof}
In order to prove \eqref{1-11bis}, let us denote
\begin{equation}
\label{2-11bis} K=\Gamma g^{-1}(0)\Gamma
\end{equation}
and let us notice that, with the conditions
\begin{equation}
\label{3-11bis} (g^{-1}(0)\xi,\xi)_n=1 \quad
(g^{-1}(0)\nabla\sigma(x),\xi)_n=0
\end{equation}
and with the normalization condition

\begin{equation}
\label{1-11ter} (Kx,x)_n=1,
\end{equation}
we have
\begin{equation}
\label{2-11ter} -(S^{(0)} \xi, \xi)_n=(\Gamma x,
x)_n\left((K\Gamma^{-1}K x, x)_n +
    (g^{-1}(0)\Gamma g^{-1}(0) \xi,\xi)_n\right)-2.
\end{equation}
Moreover, by introducing the new variables

\begin{equation}
\label{3-11ter} \eta=\left(\sqrt{g(0)}\right)^{-1}\xi,
\quad y=\left(\sqrt{g(0)}\right)^{-1}\Gamma x,
\end{equation}
conditions \eqref{3-11bis} and \eqref{1-11ter} become respectively

\begin{equation}
\label{4-11ter} |\eta|_n^2=1, \quad (y,\eta)_n=0,
\end{equation}
and
\begin{equation}
\label{5-11ter} |y|_n^2=1
\end{equation}
so that expression \eqref{2-11ter} is equal to
\begin{equation}
\label{6-11ter} H(y,\eta):=(Qy,y)_n\left((Q^{-1}y,y)_n +
    (Q^{-1}\eta,\eta)_n\right)-2.
\end{equation}
Thus we have
\begin{equation}
\label{10-11ter} \omega_0=\sup\left\{H(y,\eta)\ |\
|y|_n=1,|\eta|_n=1, (y,\eta)_n=0\right\}.
\end{equation}

\bigskip

Now let $z_{\ast}$ and $z^{\ast}$ be two linearly independent unit
eigenvectors of $Q$ such that $Qz_{\ast}=\varrho_{\ast}z_{\ast}$
and $Qz^{\ast}=\varrho^{\ast}z^{\ast}$. We have
\begin{equation}
\label{1-11quater}
H(z^{\ast},z_{\ast})=\frac{\varrho^{\ast}}{\varrho_{\ast}}-1,
\end{equation}
hence

\begin{equation}
\label{2-11quater}
\omega_0\geq\frac{\varrho^{\ast}}{\varrho_{\ast}}-1.
\end{equation}

\bigskip

In order to complete the proof of \eqref{1-11bis} we need to prove
that

\begin{equation}
\label{3-11quater}
\omega_0\leq\frac{\varrho^{\ast}}{\varrho_{\ast}}-1.
\end{equation}
To this aim we recall the following Kantorovich inequality
 \cite{Ka}, \cite{Mi}. Let $\mathcal{A}$ be a $m\times m$
positive definite symmetric real matrix and let $\alpha_{\ast}$,
$\alpha^{\ast}$ be the minimum and the maximum eigenvalues of
$\mathcal{A}$ respectively, then for every $X\in\mathbb{R}^m$ we
have
\begin{equation}
    \label{4-11quater}
    (\mathcal{A}X,X)_m(\mathcal{A}^{-1}X,X)_m\leq
    \frac{1}{4}\left(\sqrt{\frac{\alpha^*}{\alpha_*}}+\sqrt{\frac{\alpha_*}{\alpha^*}}\right)^2|X|_m^4.
\end{equation}

\bigskip

Now let $m=2n$, $X=(y,\eta)^t$ and
\begin{equation}
    \label{10-11pentium}
\mathcal{A}=\left(
\begin{array}{cc}
Q & 0 \\
0 & Q%
\end{array}%
\right),
\end{equation}
we have, for every $y,\eta\in \mathbb{R}^n$ such that
$|y|_n=|\eta|_n=1, (y,\eta)=0$
\begin{equation}
\label{1-11pentium} H(y,\eta)
=(\mathcal{A}X,X)_{2n}(\mathcal{A}^{-1}X,X)_{2n}-(Q\eta,\eta)_n(\mathcal{A}^{-1}X,X)_{2n}-2.
\end{equation}

By Schwarz inequality we have

\begin{multline}
\label{2-11pentium}\qquad (Q\eta,\eta)_n(\mathcal{A}^{-1}X,X)_{2n}
= (Q\eta,\eta)_n(Q^{-1} y,y)_n+\\+(Q\eta,\eta)_n(Q^{-1}
\eta,\eta)_n\geq\frac{\varrho_{\ast}}{\varrho^{\ast}}+|\eta|_n^2=\frac{\varrho_{\ast}}{\varrho^{\ast}}+1.
\end{multline}
On the other hand, the first term on the right-hand side of
\eqref{1-11pentium} can be estimated {}from above by inequality
\eqref{4-11quater}. By the obtained inequality and by
\eqref{2-11pentium} we get \eqref{3-11quater}, that completes the
proof of \eqref{1-11bis}.
\end{proof}

\bigskip

In the next Lemma and in the sequel we shall use the following
notation when dealing with a matrix $A=\{a_{ij}\}_{i,j=1}^n$
\begin{equation}
\label{10N-11} \left\vert
A\right\vert=\left(\sum_{i,j=1}^na_{ij}^2\right)^{1/2}.
\end{equation}

\begin{lem}
\label{lem:3-4.1} There exists a constant $C,C\geq1,$ depending
only on $\lambda,\Lambda,m_\ast$ and $m^{\ast}$ such that for
every $x\in\mathbb{R}^n\setminus\{0\}$, $0<\sigma(x)\leq1$, the
following inequalities hold true
\begin{equation}
\label{1-12} C^{-1}\leq\left\vert \nabla _{g}\sigma
\right\vert\leq C,\text { } \left\vert F _{\sigma}^g
\right\vert\leq C, \text { } \left\vert S ^{(0)} \right\vert\leq
C,
\end{equation}
\begin{equation}
\label{2-12}  \left\vert F _{\sigma}^g -F
_{\sigma}^{g(0)}\right\vert\leq C\sigma, \text { } \left\vert
S_\sigma ^{g}- S ^{(0)} \right\vert\leq C\sigma,
\end{equation}
\begin{equation}
\label{3-12} \mathcal{M}_w^g\nabla_g^T f\cdot\nabla_g^T
f\geq-(\omega_0+C\sigma)\left\vert \nabla _{g}^Tf\right\vert^2.
\end{equation}
\end{lem}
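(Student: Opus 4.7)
The plan is to verify the three estimates in order, each reducing to direct calculations once the right framework is in place. I would begin with \eqref{1-12}. Using $\partial_i\sigma = (\Gamma x)_i/\sigma$, one has $|\nabla_g \sigma|^2 = \sigma^{-2}g^{ij}(x)(\Gamma x)_i(\Gamma x)_j$, so the ellipticity \eqref{1-10} combined with the eigenvalue bounds $m_* I \leq \Gamma \leq m^* I$ from \eqref{4-10} yields the two-sided control of $|\nabla_g\sigma|$. For $F_\sigma^g$ I expand $\Delta_g\sigma = (\partial_i g^{ij})\partial_j\sigma + g^{ij}\partial^2_{ij}\sigma$: the first summand is bounded via \eqref{2-10}, and $\partial^2_{ij}\sigma = \Gamma_{ij}/\sigma - (\Gamma x)_i(\Gamma x)_j/\sigma^3 = O(\sigma^{-1})$, so that $\sigma\Delta_g\sigma = O(1)$ and hence $F_\sigma^g$ is bounded. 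The bound on $S^{(0)}$ is then read off from \eqref{1-11}--\eqref{20-11}: $B_0$ is homogeneous of degree one in $x$ while $\partial_k B_0$ and $F_\sigma^{g(0)}$ are homogeneous of degree zero, so $S^{(0)}$ is homogeneous of degree zero and bounded by its maximum on the unit sphere.

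For the $O(\sigma)$ estimates in \eqref{2-12}, I would track, in the explicit expressions for $F_\sigma^g - F_\sigma^{g(0)}$ and $S_\sigma^g - S^{(0)}$, each point where $g^{ij}(x)$ is replaced by $g^{ij}(0)$ or $\partial_k g^{ij}(x)$ by $\partial_k g^{ij}(0)$. The first kind of replacement incurs an error $\leq \Lambda|x|_n \leq (\Lambda/\sqrt{m_*})\,\sigma$ by \eqref{2-10} and \eqref{4-10}. The second kind is only controlled pointwise by $2\Lambda$, but it always multiplies a factor $B^k$ which is itself linear in $x$, hence $O(\sigma)$, so the product is again $O(\sigma)$. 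This step is careful but mechanical bookkeeping, with no conceptual difficulty beyond what was already set up in proving \eqref{1-12}.

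The substantive step is the tangential positivity \eqref{3-12}. The plan is first to apply \eqref{2-12} and the boundedness from \eqref{1-12} to replace $\mathcal{M}_\sigma^g$ by the ``frozen'' object with $g$ replaced by $g(0)$, at the cost of an error $C\sigma|v|^2$ with $v = \nabla_g^T f$. Then by \eqref{5-4} applied with $\xi_j := g_{jk}(0) v^k$, one rewrites the resulting quadratic form as $(S^{(0)}\xi,\xi)_n$. The crux is to verify that this $\xi$ meets the orthogonality condition in \eqref{2-11}: a short calculation gives $g^{ij}(0)\partial_i\sigma\,\xi_j = v^k\partial_k\sigma$, and this vanishes precisely because the tangency of $v = \nabla_g^T f$ to the level sets of $\sigma$ in the $g$-metric (a consequence of \eqref{5*-6}) unwinds to $v^k\partial_k\sigma = 0$. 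Once the orthogonality is checked, \eqref{2-11} furnishes $(S^{(0)}\xi,\xi)_n \geq -\omega_0 g^{ij}(0)\xi_i\xi_j = -\omega_0 g_{kl}(0)v^kv^l$, and the remaining discrepancy between $g_{kl}(0)v^kv^l$ and $g_{kl}(x)v^kv^l = |v|_g^2$ is $O(\sigma)|v|^2$ by \eqref{2-10}, which gets absorbed into the $C\sigma$ term. The main obstacle I anticipate is keeping the upper- and lower-index conventions consistent between the $\omega_0$ picture, where the supremum is over vectors $\xi$ annihilating $\nabla\sigma$ in the $g^{-1}(0)$ sense, and the $\mathcal{M}_\sigma^g$ picture, where the relevant vector is the ``contravariant'' tangent field $\nabla_g^T f$; once this dictionary is established via $\xi = g(0)v$, the inequality is a direct translation of the definition of $\omega_0$.
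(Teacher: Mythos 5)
Your proposal is correct and follows essentially the same route as the paper: the paper dismisses \eqref{1-12}--\eqref{2-12} as straightforward (your computations are exactly the intended ones), and for \eqref{3-12} it likewise passes from $\mathcal{M}^g_\sigma\nabla_g^Tf\cdot\nabla_g^Tf=(S^g_\sigma\zeta,\zeta)_n$ with $\zeta=g(x)\nabla_g^Tf$ to the frozen form $(S^{(0)}\tilde\zeta,\tilde\zeta)_n$, where $\tilde\zeta$ is defined precisely so that $g^{-1}(0)\tilde\zeta=\nabla_g^Tf$, i.e.\ $\tilde\zeta=g(0)\nabla_g^Tf$ is exactly your $\xi$, for which the orthogonality constraint in \eqref{2-11} holds exactly. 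The only cosmetic difference is the order in which the two $O(\sigma)$ corrections (freezing $S$ and freezing $g$) are performed.
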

\begin{proof}
The proof of \eqref{1-12} and \eqref{2-12} is straightforward. We
prove inequality \eqref{3-12}. Denote by
\begin{equation}
\label{4-12}\zeta=g\nabla_g^T f.
\end{equation}
We have by \eqref{1-10}, \eqref{2-10}, \eqref{2-12} and
\eqref{4-12}
\begin{multline}
\label{5-12} \mathcal{M}_w^g\nabla_g^T f\cdot\nabla_g^T
f=(S_{\sigma}^g\zeta,\zeta)_n\geq\\\geq(S^{(0)}\zeta,\zeta)_n-\left\vert
((S_{\sigma}^g-S^{(0))})\zeta,\zeta)_n\right\vert\geq(S^{(0)}\zeta,\zeta)_n-C\sigma\left\vert
\nabla _{g}^Tf\right\vert^2,
\end{multline}
where $C$ depends only on $\lambda,\Lambda,m_\ast$ and $m^{\ast}$.

\bigskip

Now, let us consider the term $(S_{\sigma}^g\zeta,\zeta)_n$
 on the right-hand side of
\eqref{5-12}. Denoting by
\begin{equation}
\label{10-13}\tilde{\zeta
}=\zeta+g(0)\left(g^{-1}(x)-g^{-1}(0)\right)\zeta,
\end{equation}
we have $g^{-1}(0)\tilde{\zeta }=g^{-1}(x)\zeta=\nabla_g^Tf$,
hence
\begin{equation}
\label{2-13}g^{ij}(0)\tilde{\zeta
}_j\partial_i\sigma=\nabla_g^Tf\cdot\nabla_g\sigma=0.
\end{equation}
In addition we have
\begin{equation}
\label{3-13}|\zeta-\tilde\zeta|_n\leq C|\nabla_g^Tf|\sigma
\end{equation}
and
\begin{equation}
\label{4-13}g^{ij}(0)\tilde{\zeta
}_j\tilde\zeta_i\leq\left(1+C\sigma\right)|\nabla_g^Tf|^2,
\end{equation}
where $C$ depends only on $\lambda,\Lambda,m_\ast$ and $m^{\ast}$.
\bigskip
By \eqref{2-11}, \eqref{1-12}, \eqref{2-13} and \eqref{3-13}, we
obtain, for every $x\in\mathbb{R}^n\setminus\{0\}$ such that
$0<\sigma(x)\leq1$,
\begin{multline}
\label{20-13}
(S^{(0)}\zeta,\zeta)_n\geq(S^{(0)}\tilde\zeta,\tilde\zeta)_n-\\-\left\vert
(S^{(0)}(\zeta-\tilde\zeta),\zeta-\tilde\zeta)_n \right\vert\ -
2\left\vert(S^{(0)}(\zeta-\tilde\zeta),\tilde\zeta)_n
\right\vert\geq\\\geq-\omega_0(g^{-1}(0)\tilde\zeta,\tilde\zeta)_n-C|\zeta-\tilde\zeta|_n^2-
2C|\zeta-\tilde\zeta|_n|\tilde\zeta|_n\geq\\\geq-(\omega_0+C\sigma)|\nabla_g^Tf|^2,
\end{multline}
where $C$ depends only on $\lambda,\Lambda,m_\ast$ and $m^{\ast}$.
By the just obtained inequality and by \eqref{5-12} we obtain
\eqref{3-12}.
\end{proof}

\bigskip

Let $r$ be a given positive number, in the sequel we shall denote
by $B_r^{\sigma}$ the set
$\left\{x\in\mathbb{R}^n|\sigma(x)<r\right\}$. In addition, in
order to simplify the notation, we shall denote
$\int_{\mathbb{R}^n}(.)dx$ simply by $\int$ and, instead to write
\textquotedblleft$f$ is a function that belongs to
$C_0^\infty\left(\mathbb{R}^n\setminus\{0\}\right)$ and $f$ is
such that $\text { supp}(f)\subset
B_r^{\sigma}\setminus\{0\}$\textquotedblright, we shall write
simply \textquotedblleft$f\in C_0^\infty\left(
B_r^{\sigma}\setminus\{0\}\right)$\textquotedblright.

\bigskip

\begin{theo}
\label{theo:4-4.1}Let $\beta$ be a number such that
$\beta>\omega_0$, let
\begin{equation}
\label{1-15} \varphi(s)=e^{-s^{-\beta}}
\end{equation}
and let $w(x)=\varphi\left(\sigma(x)\right)$. There exist
constants $C$, $\tau_1$ and $r_0$, ($C\geq 1$, $\tau_1\geq 1$,
$0<r_0\leq 1$) depending only on $\lambda,\Lambda,m_\ast,m^{\ast}$
and $\beta$ such that for every $u\in C_0^\infty\left(
B_{r_0}^{\sigma}\setminus\{0\}\right)$ and for every
$\tau\geq\tau_1$ the following inequality holds true
\begin{multline}
\label{2-15}
\tau\int\sigma^{\beta}w^{-2\tau}|\nabla_gu|^2+\tau^3\int\sigma^{-\beta-2}w^{-2\tau}u^2\leq
C\int\sigma^{2\beta+2}w^{-2\tau}\left(\Delta_gu\right)^2.
\end{multline}

\end{theo}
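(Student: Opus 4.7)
The plan is to run the standard conjugation argument on the identity provided by Lemma \ref{lem:2-4.1}. Set $f = w^{-\tau}u$, so that by \eqref{10-3} we have $w^{-\tau}\Delta_g u = P_\tau(f)$, and compact support of $u$ in $B_{r_0}^\sigma\setminus\{0\}$ transfers to $f$. Integrating the pointwise identity \eqref{2-6} over $\mathbb{R}^n$ kills the divergence term, and since the symmetric-square term $\frac{w^2}{|\nabla_g w|^2}(P_\tau^{(s)}(f))^2$ is non-negative it may be discarded. What remains is to show that the three target terms
\[
4\tau^2\int(\partial_Y f)^2\!\bigl(1+\tfrac{F_w^g}{2\tau}\bigr), \quad 4\tau\!\int\!\!\Bigl(\mathcal{M}_w^g\nabla_g^T f\!\cdot\!\nabla_g^T f + \tfrac{1}{2}F_w^g|\nabla_g^T f|^2\Bigr), \quad -2\tau^3\!\int\tfrac{|\nabla_g w|^2}{w^2}F_w^g\bigl(1+\tfrac{F_w^g}{2\tau}\bigr)f^2
\]
are, for $\tau$ large and $r_0$ small, bounded below by $c\bigl(\tau^2\int(\partial_Y f)^2 + \tau\int\sigma^\beta|\nabla_g^T f|^2 + \tau^3\int\sigma^{-\beta-2}f^2\bigr)$, while the remainder $2\tau\int F_w^g f\,P_\tau(f)$ can be absorbed.

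The explicit computations are provided by Lemma \ref{lem:1-4.1} with $v=\sigma$ and $\varphi(s)=e^{-s^{-\beta}}$: one finds $\Phi(s)=s^\beta/\beta$, $\Phi'(s)=s^{\beta-1}$, and hence
\[
F_w^g = \tfrac{\sigma^\beta}{\beta}F_\sigma^g - \sigma^\beta = O(\sigma^\beta),\qquad \tfrac{|\nabla_g w|^2}{w^2} = \beta^2\sigma^{-2\beta-2}|\nabla_g\sigma|^2,
\]
\[
\mathcal{M}_w^g\nabla_g^T f\cdot\nabla_g^T f = \sigma^\beta|\nabla_g^T f|^2 + \tfrac{\sigma^\beta}{\beta}\mathcal{M}_\sigma^g\nabla_g^T f\cdot\nabla_g^T f.
\]
Combining the last identity with the crucial lower bound \eqref{3-12} of Lemma \ref{lem:3-4.1} gives
\[
\mathcal{M}_w^g\nabla_g^T f\cdot\nabla_g^T f + \tfrac{1}{2}F_w^g|\nabla_g^T f|^2 \geq \sigma^\beta\Bigl(1 - \tfrac{\omega_0}{\beta} - C\sigma\Bigr)|\nabla_g^T f|^2,
\]
and the hypothesis $\beta>\omega_0$ lets us choose $r_0$ small enough (depending only on $\lambda,\Lambda,m_\ast,m^\ast,\beta$) so that this coefficient is at least $c_0\sigma^\beta$ on $B_{r_0}^\sigma$. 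Similarly, for $\tau$ large and $\sigma\le r_0$ one has $-F_w^g(1+(2\tau)^{-1}F_w^g)\geq \tfrac{1}{2}\sigma^\beta$, which via $|\nabla_g w|^2/w^2 = \beta^2 \sigma^{-2\beta-2}|\nabla_g\sigma|^2$ produces the decisive bound $\geq c_1\tau^3\sigma^{-\beta-2}f^2$; the $(\partial_Y f)^2$ coefficient is obviously $\geq 2\tau^2$ for $\tau$ large. The cross term is disposed of by Young's inequality: $|2\tau F_w^g f P_\tau(f)|\leq\epsilon\tau^3\sigma^{-\beta-2}f^2 + C\epsilon^{-1}\tau^{-1}\sigma^{3\beta+2}(P_\tau(f))^2$, and since $\sigma^{3\beta+2}\leq r_0^\beta\sigma^{2\beta+2}$ and $\frac{w^2}{|\nabla_g w|^2}\sim\sigma^{2\beta+2}$, this is absorbed on both sides for small $\epsilon$ and large $\tau$.

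At this stage we have an estimate for $f$; it remains to translate it back to $u$. Since $w^{-\tau}\nabla_g u = \nabla_g f + \tau(\nabla_g w/w)f = \nabla_g f + \tau\beta\sigma^{-\beta-1}(\nabla_g\sigma) f$, expansion gives $\sigma^\beta w^{-2\tau}|\nabla_g u|^2 = \sigma^\beta|\nabla_g f|^2 + 2\tau\beta\sigma^{-1}(\nabla_g f\cdot\nabla_g\sigma)f + \tau^2\beta^2\sigma^{-\beta-2}|\nabla_g\sigma|^2 f^2$; integrating the middle term by parts and using the identity $\operatorname{div}_g(\sigma^{-1}\nabla_g\sigma) = \sigma^{-2}F_\sigma^g|\nabla_g\sigma|^2$ together with the uniform bound \eqref{1-12} produces a term of order $\tau^2\int\sigma^{-\beta-2}f^2$, absorbed by the $\tau^3$-term for $\tau$ large. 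Putting everything together yields \eqref{2-15}. The main technical obstacle is the lower bound on the tangential quadratic form $\mathcal{M}_w^g\nabla_g^T f\cdot\nabla_g^T f$, which is exactly where the sharp threshold $\beta>\omega_0$ enters and forces the appearance of the constant $\omega_0=\varrho^\ast/\varrho_\ast-1$ of Proposition \ref{propRem}; all other manipulations are bookkeeping controlled by boundedness of $F_\sigma^g$, $S_\sigma^g$ and the smallness of $\sigma$ on the support of $f$.
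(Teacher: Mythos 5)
Your overall architecture (conjugate, integrate the identity of Lemma \ref{lem:2-4.1}, kill the divergence, bound the three quadratic forms below, absorb the cross term by Young, then pass from $f$ back to $u$) matches the paper's, and your treatment of the tangential term via \eqref{4-5} and \eqref{3-12} is exactly right. But there is a genuine gap in your treatment of the zeroth--order term. You discard the symmetric square $\frac{w^2}{|\nabla_gw|^2}\bigl(P_\tau^{(s)}(f)\bigr)^2$ as merely non-negative and then claim that $-F_w^g\bigl(1+(2\tau)^{-1}F_w^g\bigr)\geq\tfrac12\sigma^\beta$ for small $\sigma$ and large $\tau$. This is false in general. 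Indeed $F_w^g=\sigma^\beta\bigl(\tfrac{1}{\beta}F_\sigma^g-1\bigr)$, so the sign of $-F_w^g$ is governed by whether $\beta>F_\sigma^{g}$, and $F_\sigma^{g(0)}$ is \emph{not} controlled by $\omega_0$: in the model case $g(0)=\Gamma=I$ one has $F_\sigma^{g(0)}=n-2$ while $\omega_0=0$, so for $n\geq3$ and any $\beta\in(\omega_0,n-2)$ your ``decisive'' coefficient is strictly \emph{negative}, of size $-c\,\tau^3\sigma^{-\beta-2}f^2$, and the desired $\tau^3$ term cannot be produced this way. (The theorem is stated and proved for all $n\geq2$; even for $n=2$ you would need to verify the nontrivial coincidence $\sup F_\sigma^{g(0)}\leq\omega_0$, which you do not address.)

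The missing idea is precisely how the paper recovers positivity: it does \emph{not} throw away $\bigl(P_\tau^{(s)}(f)\bigr)^2$, but writes $X^2=\bigl((X-Y)+Y\bigr)^2\geq 2Y(X-Y)$ with $Y=\tau\frac{|\nabla_gw|^2}{w^2}\psi_1 f$, extracting the extra contribution
\begin{equation*}
2\tau^3\Bigl(\bigl(1-\tfrac{\psi_1}{\tau}\bigr)\psi_1\tfrac{|\nabla_gw|^2}{w^2}+\tfrac{1}{2\tau^2}\Delta_g\psi_1\Bigr)f^2-2\tau\psi_1|\nabla_gf|^2+\divrg(q_1),
\end{equation*}
and then chooses $\psi_1=\psi_0+\varepsilon\sigma^\beta/\beta$ with $\psi_0$ the leading part of $F_w^g$. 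The combined coefficient of $2\tau^3\frac{|\nabla_gw|^2}{w^2}f^2$ becomes $(\psi_1-F_w^g)+O(\tau^{-1})\sim\varepsilon\sigma^\beta/\beta>0$ regardless of the sign of $F_w^g$, while the price $-2\tau\psi_1|\nabla_gf|^2$ only shifts the tangential coefficient by $\tfrac12(F_w^g-\psi_1)\geq-\tfrac{\sigma^\beta}{\beta}(\varepsilon+C\sigma)/2$, which the constraint $\varepsilon\leq\beta-\omega_0$ keeps harmless. Your concluding passage from $f$ to $u$ is fine (Cauchy--Schwarz on $w^{-\tau}\nabla_gu=\nabla_gf+\tau\beta\sigma^{-\beta-1}(\nabla_g\sigma)f$ and absorption into the $\tau^3$ term suffice; no integration by parts is needed there), but without the $\psi_1$ device the $\tau^3\int\sigma^{-\beta-2}w^{-2\tau}u^2$ term on the left of \eqref{2-15} is not obtained.
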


\begin{proof}Let $w(x)=\varphi\left(\sigma(x)\right)$, where $\sigma(x)=\left((\Gamma
x,x)_n\right)^{1/2}$. Let us notice that $\varphi$ satisfies the
hypotheses of Lemma \ref{lem:1-4.1} and that
\begin{equation}
\label{1-16}\Phi(s)=\frac{s^{\beta}}{\beta}.
\end{equation}
Let $u\in C_0^\infty\left( B_{1}^{\sigma}\setminus\{0\}\right)$
and $f=w^{-\tau}u$. By \eqref{4-5} and by \eqref{3-12} we have
\begin{equation}
\label{2-16} \mathcal{M}_w^g\nabla_g^T f\cdot\nabla_g^T
f\geq{\sigma}^{\beta}\left(1-\frac{\omega_0}{\beta}-C\sigma\right)\left\vert
\nabla _{g}^Tf\right\vert^2,
\end{equation}
where $C$ depends only on $\lambda,\Lambda,m_\ast,m^{\ast}$ and
$\beta$.\\
Now, denoting
\begin{equation}
\label{10-16}
\psi_0={\sigma}^{\beta}\left(-1+\frac{1}{\beta}F_{\sigma}^{g(0)}\right),
\end{equation}
by \eqref{3-5} we have

\begin{equation}
\label{20-16}
F_{w}^{g}=\psi_0+\frac{{\sigma}^{\beta}}{\beta}\left(F_{\sigma}^{g}-F_{\sigma}^{g(0)}\right),
\end{equation}
hence by \eqref{1-12} and \eqref{2-12} of Lemma \ref{lem:3-4.1} we
have, for every $x\in B_1^{\sigma}\setminus\{0\}$,
\begin{equation}
\label{2*-16} \left\vert F_{w}^{g}\right\vert \leq
C{\sigma}^{\beta}, \quad\quad\quad \left\vert
F_{w}^{g}-\psi_0\right\vert\leq C{\sigma}^{\beta+1},
\end{equation}
where $C, C\geq1$, depends only on
$\lambda,\Lambda,m_\ast,m^{\ast}$ and $\beta$.

\bigskip

Let $\psi_1$ be a function that we shall choose later on, by
\eqref{2-3} we have
\begin{multline}
\label{1-17} \qquad \frac{w^2}{|\nabla_gw|^2}\left(P_{\tau
}^{(s)}(f)\right)^2=\\=\frac{w^2}{|\nabla_gw|^2}\left(P_{\tau
}^{(s)}(f)-\tau\frac{|\nabla_gw|^2}{w^2}\psi_1f+\tau\frac{|\nabla_gw|^2}{w^2}\psi_1f\right)^2 \geq\\
\geq 2\tau\psi_1f\left(P_{\tau
}^{(s)}(f)-\tau\frac{|\nabla_gw|^2}{w^2}\psi_1f\right)=\\=
2\tau^3\left(\left(1-\frac{\psi_1}{\tau}
\right)\psi_1\frac{|\nabla_gw|^2}{w^2}+\frac{1}{2\tau^2}\Delta_g\psi_1\right)f^2
-2\tau\psi_1|\nabla_gf|^2+\divrg(q_1),
\end{multline}
where
\begin{equation}
\label{10-17}q_1=\tau\left(2\psi_1f\nabla_gf-f^2\nabla_g\psi_1\right).
\end{equation}
By inequalities \eqref{2-16} and \eqref{1-17}, by \eqref{4*-6} and by Lemma
\ref{lem:2-4.1} we obtain
\begin{multline}
\label{2-17} \frac{w^2}{|\nabla_gw|^2}\left(P_\tau(f)\right)^2\geq
2\tau^3a_1f^2+\\+4\tau
a_2|\nabla_g^Tf|^2+4{\tau}^2a_3\left(\partial_Y f\right)^2 + 2\tau
F_{w }^{g}f P_{\tau}(f)+ \divrg(q_2),
\end{multline}
where

\begin{equation}
\label{30-18}
a_1=\frac{|\nabla_gw|^2}{w^2}\left(\left(\psi_1-F_w^g\right)-\frac{1}{\tau}\left(\frac{1}{2}\left(F_w^g\right)^2+
\psi_1^2\right)\right)+\frac{1}{2\tau^2}\Delta_g\psi_1,
\end{equation}

\begin{equation}
\label{20-18}
a_2={\sigma}^{\beta}\left(1-\frac{\omega_0}{\beta}-C\sigma\right)+\frac{1}{2}\left(F_w^g-\psi_1\right)
\end{equation}

\begin{equation}
\label{10-18}a_3=1+\frac{1}{2\tau}(F_w^g-\psi_1),
\end{equation}

\begin{equation}
\label{40-18} q_2=q+q_1.
\end{equation}

\bigskip

Now we choose
\begin{equation}
\label{50-18} \psi_1=\psi_0+\frac{\varepsilon\sigma^\beta}{\beta},
\end{equation}
where $0<\varepsilon\leq \text{min}\{1,\beta-\omega_0\}$.\\
Let us notice that for every $x\in B_1^{\sigma}\setminus\{0\}$,

\begin{equation}
\label{1-18}C^{-1}\sigma^{-2\beta-2}\leq\frac{|\nabla_gw|^2}{w^2}\leq
C\sigma^{-2\beta-2},
\end{equation}

\begin{equation}
\label{2-18}F_w^g-\psi_1\geq-\frac{\sigma^\beta}{\beta}\left(\varepsilon+C\sigma\right),
\end{equation}

\begin{equation}
\label{3-18}\psi_1-F_w^g\geq\frac{\sigma^\beta}{\beta}\left(\varepsilon-C\sigma\right),
\end{equation}

\begin{equation}
\label{1-19}|\psi_1|\leq C\sigma^{\beta},\qquad
|\Delta_g\psi_1|\leq C\sigma^{\beta-2},
\end{equation}
where $C, C\geq1$, depends only on
$\lambda,\Lambda,m_\ast,m^{\ast}$ and $\beta$, with \eqref{2-18}--\eqref{1-19} following {}from
\eqref{10-16}--\eqref{2*-16} and \eqref{50-18}. {}From \eqref{1-18}--\eqref{1-19}
we have that, for every
$x\in B_1^{\sigma}\setminus\{0\}$ and for every $\tau\geq1$

\begin{equation}
\label{10-19} a_1\geq
C_{\ast}^{-1}\sigma^{-\beta-2}\left(\varepsilon-C_0\sigma-\frac{C_1}{\tau}\sigma^\beta\right),
\end{equation}
where $C_{\ast}, C_0, C_1$ ($C_{\ast}\geq1, C_0\geq1, C_1\geq1$)
depend only on $\lambda,\Lambda,m_\ast,m^{\ast}$ and $\beta$.
Therefore, if $0<\sigma(x)\leq\frac{\varepsilon}{2C_0}$ and
$\tau\geq\frac{4C_1}{\varepsilon}$, then we have

\begin{equation}
\label{2-19} a_1
\geq\frac{\varepsilon}{4}C_{\ast}^{-1}\sigma^{-\beta-2},
\end{equation}
where $C, C\geq1$, depends only on
$\lambda,\Lambda,m_\ast,m^{\ast}$ and
$\beta$.\\
Concerning $a_2$, we have by \eqref{2-18}
\begin{equation}
\label{20-19}
a_2\geq{\sigma}^{\beta}\left(\frac{1}{2}\left(1-\frac{\omega_0}{\beta}\right)-C_2\sigma\right),
\end{equation}
where $C_2, C_2\geq1$, depends only on
$\lambda,\Lambda,m_\ast,m^{\ast}$ and $\beta$. Therefore, if
$0<\sigma(x)\leq\dfrac{\beta-\omega_0}{4\beta C_2}$, then we have
\begin{equation}
\label{3-19}
a_2\geq\frac{1}{4}{\sigma}^{\beta}\left(1-\frac{\omega_0}{\beta}\right),
\end{equation}
Concerning $a_3$, by \eqref{1-19} and \eqref{2*-16} we have that
there exists $C_3, C_3\geq1$, depending only on
$\lambda,\Lambda,m_\ast,m^{\ast}$ and $\beta$ such that if
$\tau\geq C_3$ and $0<\sigma(x)\leq 1$ then
\begin{equation}
\label{4-19} a_3\geq\frac{1}{2}.
\end{equation}

\bigskip

Now, denote by $\tau_0=\text{max}\{\frac{4C_1}{\varepsilon},C_3\}$
and
$r_0=\text{min}\{\frac{\varepsilon}{2C_0},\frac{\beta-\omega_0}{4\beta
C_2}\}$, by \eqref{4*-6}, \eqref{2-17}, \eqref{2-19}, \eqref{3-19}
and \eqref{4-19} we have

\begin{multline}
\label{1-20}
\qquad\frac{w^2}{|\nabla_gw|^2}\left(P_\tau(f)\right)^2\geq
\tau^3\sigma^{-\beta-2}\frac{\varepsilon }{2}C_{\ast}^{-1}f^2+\\
+\tau{\sigma}^{\beta}\left(1-\frac{\omega_0}{\beta}\right)|\nabla_gf|^2+
2\tau F_{w }^{g}f P_{\tau}(f)+ \divrg(q_2),
\end{multline}
for every $x\in B_{r_0}^{\sigma}\setminus\{0\}$ and
$\tau\geq\tau_0$.\\
By Young's inequality, by the first of \eqref{2*-16} and by \eqref{2-18} we have

\begin{equation}
\label{*1-20} |2\tau F_{w }^{g}f
P_{\tau}(f)|\leq\frac{1}{2}\frac{w^2}{|\nabla_gw|^2}\left(P_\tau(f)\right)^2
+C_{4}\tau^2\sigma^{-2}f^2,
\end{equation}
where $C_4, C_4\geq1$, depends only on
$\lambda,\Lambda,m_\ast,m^{\ast}$ and $\beta$.\\
By \eqref{1-20} and \eqref{*1-20} we have

\begin{multline}
\label{2-20}
\qquad\frac{1}{2}\frac{w^2}{|\nabla_gw|^2}\left(P_\tau(f)\right)^2\geq
\tau^3\sigma^{-\beta-2}\frac{\varepsilon }{4}C_{\ast}^{-1}f^2+\\
+\tau{\sigma}^{\beta}\left(1-\frac{\omega_0}{\beta}\right)|\nabla_gf|^2+
 \divrg(q_2),
\end{multline}
for every $x\in B_{r_0}^{\sigma}\setminus\{0\}$ and every
$\tau\geq\tau_1:=\text{max}\{\tau_0,\frac{4C_{\ast}C_4}{\varepsilon}\}$.

\bigskip

Finally, we choose $\varepsilon=\text{min}\{1,\beta-\omega_0\}$.
Recalling that $f=w^{-\tau}u$, and integrating both sides of
\eqref{2-20} over $B_{r_0}^{\sigma}\setminus\{0\}$, we obtain
\eqref{2-15}.
\end{proof}

\begin{rem}
  \label{rem:nondiv} It is straightforward that estimate \eqref{2-15} remains
  valid for operators in non-divergence form
  $Pu=g_{ij}\partial^2_{ij}u$. Of course, the values of the constants, and in particular of $\tau_1$, might
  be different.
\end{rem}
\section{Carleman estimate for product of two second order elliptic operators}\label{Sec4.2}

 {}In this section and in the sequel we return to the standard notation, that is we denote
 by $|\cdot|$ and by $\cdot$ the euclidian  norm and scalar product respectively.

Let $\{g_1^{ij}(x)\}_{i,j=1}^n$ and $\{g_2^{ij}(x)\}_{i,j=1}^n$ be
two symmetric matrix real valued functions which satisfy
conditions \eqref{1-10}, \eqref{2-10} and let us assume that
\begin{equation}
    \label{1-22}
\sum_{i,j=1}^n\|\nabla ^2 g_1^{ij}\|_{L^\infty(\R^n)}\leq
\Lambda_1, \quad\sum_{i,j=1}^n\|\nabla ^2
g_2^{ij}\|_{L^\infty(\R^n)}\leq \Lambda_1,
\end{equation}
with $\Lambda_1>0$. Let us denote by $L_1$, $L_2$ and
$\mathcal{L}$ the operators
\begin{equation}
    \label{10-22}
L_1(u)=\sum_{i,j=1}^n g_1^{ij}(x)\partial_{ij}^2 u,\quad
L_2(u)=\sum_{i,j=1}^n g_2^{ij}(x)\partial_{ij}^2 u,
\end{equation}
\begin{equation}
    \label{1*-22}
   \mathcal{L}(u)=L_2(L_1 u).
\end{equation}
In the sequel we shall need the following standard proposition
which we prove for the reader's convenience.
\begin{prop}
\label{prop:5-4.2} Let $L_1$, $L_2$ and $\mathcal{L}$ be the
operators defined above. Given $a\in C^1(\R^n\setminus\{0\})$ and
$u\in C^\infty_0(\R^n\setminus\{0\})$, the following inequalities
hold true:
\begin{equation}
    \label{2-22}
   \int a^2|\nabla ^2 u|^2\leq C\left(\int a^2|L_k u|^2+\int (a^2 + |\nabla a|^2)|\nabla u|^2\right),\quad k=1,2,
\end{equation}

\begin{equation}
    \label{1-23}
   \int a^2|\nabla ^3 u|^2\leq C\left(\int a^2|\mathcal{L} u||\nabla ^2 u|+\int (a^2 + |\nabla a|^2)|\nabla^2 u|^2\right),
\end{equation}
where $C$ only depends on $\lambda$ and $\Lambda$.
\end{prop}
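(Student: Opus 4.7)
The plan is to prove both inequalities by elementary integration by parts, combining two ingredients: (a) the uniform ellipticity of $L_1,L_2$ in the form of the algebraic pointwise inequality
$$
g_k^{ik}(x)g_k^{jl}(x)\,M_{ij}M_{kl}\ \ge\ \lambda^{2}|M|^{2},\qquad k=1,2,
$$
valid for every symmetric $M=\{M_{ij}\}_{i,j=1}^n$ (easily checked by diagonalizing the positive definite matrix $g_k(x)$, which makes the induced operator on symmetric tensors positive definite with smallest eigenvalue $\lambda^{2}$); and (b) the hypothesis $u\in C_0^{\infty}(\mathbb{R}^n\setminus\{0\})$, which guarantees that all boundary terms produced by integration by parts vanish even when the weight $a$ is singular at the origin.

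For \eqref{2-22} we fix $k\in\{1,2\}$, apply (a) to $M=\nabla^{2}u$, and reduce matters to establishing the Bochner-type identity
$$
\int a^{2}g_k^{ik}g_k^{jl}\partial^{2}_{ij}u\,\partial^{2}_{kl}u\ =\ \int a^{2}(L_ku)^{2}\ +\ \mathcal R,
$$
with $\mathcal R$ of controllable size. This is done by rewriting $(L_ku)^{2}=g_k^{ab}g_k^{cd}\,\partial^{2}_{ab}u\,\partial^{2}_{cd}u$ and performing two successive integrations by parts: first shift $\partial_d$ from $\partial^{2}_{cd}u$ onto the remaining factors, then shift the resulting $\partial^{3}_{abd}u$ onto $\partial_c u$. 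Using the symmetry $\partial^{2}_{ij}u=\partial^{2}_{ji}u$ and relabeling, the ``good'' part reproduces the left hand side, while each summand of $\mathcal R$ is a product of $a\nabla a$ or $a^{2}\nabla g_k$ with either $\nabla^{2}u\cdot\nabla u$ or $L_ku\cdot\nabla u$. Applying Cauchy--Schwarz with a small parameter $\varepsilon>0$, every such term is bounded by $\varepsilon\int a^{2}|\nabla^{2}u|^{2}+\varepsilon\int a^{2}(L_ku)^{2}+C_\varepsilon\int(a^{2}+|\nabla a|^{2})|\nabla u|^{2}$; absorbing the first two contributions into the left hand side yields \eqref{2-22}.

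For \eqref{1-23} we first apply \eqref{2-22} componentwise, with $u$ replaced by $\partial_h u$ and $L_k=L_1$. Using the commutator formula $L_1(\partial_h u)=\partial_h L_1u-(\partial_h g_1^{ab})\partial^{2}_{ab}u$ and summing on $h=1,\dots,n$ we obtain
$$
\int a^{2}|\nabla^{3}u|^{2}\ \le\ C\int a^{2}|\nabla L_1u|^{2}+C\int(a^{2}+|\nabla a|^{2})|\nabla^{2}u|^{2},
$$
so it remains to estimate $\int a^{2}|\nabla L_1 u|^{2}$. By ellipticity of $g_2$, $|\nabla L_1u|^{2}\le\lambda^{-1}g_2^{ij}\partial_iL_1u\,\partial_jL_1u$ pointwise, and one integration by parts in $\partial_j$ gives
$$
\int a^{2}g_2^{ij}\partial_iL_1u\,\partial_jL_1u\ =\ -\int a^{2}\mathcal L u\cdot L_1u-\int a^{2}(\partial_jg_2^{ij})\partial_iL_1u\cdot L_1u-\int 2a(\partial_ja)g_2^{ij}\partial_iL_1u\cdot L_1u.
$$
Using $|L_1u|\le C|\nabla^{2}u|$, the first term on the right is bounded by $C\int a^{2}|\mathcal Lu||\nabla^{2}u|$, while the other two are bounded via Cauchy--Schwarz by $\varepsilon\int a^{2}|\nabla L_1u|^{2}+C_\varepsilon\int(a^{2}+|\nabla a|^{2})|\nabla^{2}u|^{2}$. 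Absorbing and inserting back into the previous display gives \eqref{1-23}.

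The only delicate point is the Bochner-type calculation behind \eqref{2-22}: one must verify that every cross term generated by the two integrations by parts is either absorbable into $\int a^{2}|\nabla^{2}u|^{2}$ or of the declared form $C_\varepsilon\int(a^{2}+|\nabla a|^{2})|\nabla u|^{2}$. The appearance of $|\nabla a|$ (and not of higher derivatives of $a$) on the right hand side of both estimates is a direct reflection of the fact that each integration by parts produces at most one derivative falling on the weight.
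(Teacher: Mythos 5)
Your proposal is correct and follows essentially the same route as the paper: for \eqref{2-22} the paper pairs $L_ku$ with $a^2\Delta u$ and integrates by parts twice to reveal the coercive form $\int a^2 g_k^{ij}\partial^2_{il}u\,\partial^2_{jl}u$, which is just a cosmetic variant of your expansion of $\int a^2(L_ku)^2$ into $\int a^2\,g_k^{ab}g_k^{cd}\partial^2_{ad}u\,\partial^2_{bc}u$ plus first-order remainders. For \eqref{1-23} the paper likewise combines the commutator bound $|L_1(\partial_l u)|\leq|\partial_l(L_1u)|+C|\nabla^2u|$, the estimate \eqref{2-22} applied to $\partial_l u$, and a weighted Caccioppoli inequality for $L_2$ applied to $v=L_1u$, exactly as you do.
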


\begin{proof}
To simplify the notation, let us omit the index $k$ in $L_k$. For a
fixed $l\in\{1,...,n\}$ we have
\begin{multline}
   \int Lu\partial^2_{ll}u a^2=-\int\partial_l(a^2g^{ij}\partial^2_{ij} u)\partial_l u=\\
   =-\int a^2 g^{ij}\partial^3_{ijl} u \partial_l u -2\int a \partial_l a g^{ij}\partial^2_{ij} u \partial_l u-
   \int (\partial_l g^{ij})\partial^2_{ij} u \partial_l u a^2=\\
   =\int a^2 g^{ij}\partial^2_{il} u \partial^2_{jl} u +\int \partial_j(a^2g^{ij})\partial^2_{il} u \partial_l u
   -2\int a \partial_l a g^{ij}\partial^2_{ij} u \partial_l u-
   \int (\partial_l g^{ij})\partial^2_{ij} u \partial_l u a^2\geq\\
   \geq\lambda\int a^2|\nabla\partial_l u|^2-C\int(|a|+|\nabla a|)|a||\nabla u||\nabla^2 u|,
\end{multline}
where $C$ only depends on $\lambda$ and $\Lambda$.

Now, summing up with respect to $l$ the above inequalities and
applying the inequality $2xy\leq x^2+y^2$, we get \eqref{2-22}.

Now we prove \eqref{1-23}. First we observe that, \cite{G-T},
multiplying both sides of the second equality \eqref{10-22} by
$a^2v$ and integrating by parts we easily obtain
\begin{equation}
    \label{2-23}
   \int a^2|\nabla v|^2\leq C\left(\int a^2|L_2 v||v|+\int (a^2 + |\nabla a|^2)v^2\right),
\end{equation}
where $C$ only depends on $\lambda$ and $\Lambda$.

Let us apply \eqref{2-23} to $v=L_1 u$. Noticing that, for a fixed
$l\in\{1,...,n\}$, we have
\begin{equation}
    \label{10-24}
   |L_1(\partial_l u)|\leq |\partial_l (L_1 u)|+ C|\nabla^2 u|,
\end{equation}
where $C$ only depends on $\Lambda$, we obtain
\begin{equation}
    \label{1-24}
   \int a^2|L_1(\partial_l u)|^2\leq C\left(\int a^2|\mathcal{L} u||\nabla^2 u|+\int (a^2 + |\nabla a|^2)|\nabla^2 u|^2\right),
\end{equation}
where $C$ only depends on $\lambda$ and $\Lambda$.

Finally, by applying inequality \eqref{2-22} to estimate {}from
below the integral on the left hand side of \eqref{1-24}, and
summing up with respect to $l$, we get \eqref{1-23}.
\end{proof}
In order to prove the next theorem we need to use some
transformation formulae for the operator $\mathcal{L}$ which we
recall now. Let $\Psi:\R^n\rightarrow \R^n$ be a $C^4$
diffeomorphism. We have
\begin{equation}
    \label{2-24}
     (\mathcal{L} u)(\Psi^{-1}(y))=(\widetilde{\mathcal{L}} U)(y)+(Q U)(y),
\end{equation}
where $U(y)=u(\Psi^{-1}(y))$, $Q$ is a third order operator,
$\widetilde{\mathcal{L}}=\widetilde{L}_2\widetilde{L}_1$,
$\widetilde{L}_k=\sum_{i,j=1}^n
\widetilde{g}_k^{ij}(y)\partial^2_{ij}$, $k=1,2$, and
$\widetilde{g}_k^{-1}(\Psi(x))=\frac{\partial\Psi}{\partial
x}(x)g_k^{-1}(x)\left(\frac{\partial\Psi}{\partial
x}(x)\right)^t$, namely
\begin{equation}
    \label{10-25}
    \widetilde{g}_k^{ij}(\Psi(x))=\sum_{r,s=1}^n g_k^{rs}(x)\frac{\partial\Psi_i}{\partial x_r}(x)\frac{\partial\Psi_j}{\partial x_s}(x),
    \quad i,j=1,...,n.
\end{equation}
We can find  a linear map $\Psi$ such that
$\widetilde{g}_1^{-1}(0)$ is the identity matrix and
$\widetilde{g}_2^{-1}(0)$ is a diagonal matrix. More precisely,
let $R_1$ be the matrix of a rotation such that
$R_1g_1^{-1}(0)R_1^t=\text{diag}\{\nu_1,....\nu_n\}$, where
$\nu_i$, $i=1,...,n$, are the eigenvalues of $g_1^{-1}(0)$, and
let
$H=\text{diag}\{\frac{1}{\sqrt{\nu_1}},...,\frac{1}{\sqrt{\nu_n}}\}$.
We have that $HR_1g_1^{-1}(0)R_1^tH^t$ is  equal to the identity
matrix. Now let $R_2$ be the matrix of a rotation such that
$\widetilde{g}_2^{-1}(0)=R_2HR_1g_2^{-1}(0)R_1^tH^tR_2^t$ has a
diagonal form. We have that the desired map is $\Psi(x)=R_2HR_1x$.
In addition, notice that if $\nu_*$, $\nu^*$ are the minimum and
maximum eigenvalues of $g_1^{-1}(0)$ respectively and $\mu_*$,
$\mu^*$ are the minimum and maximum eigenvalues of $g_2^{-1}(0)$
respectively, then
\begin{equation}
    \label{1-26}
    \frac{\mu_*}{\nu^*}|x|^2\leq \widetilde{g}_2^{-1}(0)x \cdot x \leq \frac{\mu^*}{\nu_*}|x|^2, \quad \hbox{for every }x\in\R^n.
\end{equation}
\begin{theo}
   \label{theo:6-4.2} Let $\mathcal{L}$ be the operator defined by \eqref{1*-22}.
   Let $\nu_*$ and $\nu^*$ ($\mu_*$ and $\mu^*$) be the minimum and the maximum eigenvalues of $g_1^{-1}(0)$ ($g_2^{-1}(0)$).
   Then there exists a symmetric matrix $\Gamma_0$ satisfying
\begin{equation}
    \label{2-26}
    \lambda^2|x|^2\leq \sigma_0^2(x):=\Gamma_0x \cdot x \leq \lambda^{-2}|x|^2,
\end{equation}
and such that if $\beta>\sqrt{\frac{\mu^*\nu^*}{\mu_*\nu_*}}-1$
and
\begin{equation}
    \label{3-26}
    w_0(x)=e^{-\left(\sigma_0(x)\right)^{-\beta}}
\end{equation}
the following inequality holds true:
\begin{equation}
    \label{1-27}
    \sum_{k=0}^3 \tau^{6-2k}\int \sigma_0^{-\beta-2+k(2\beta+2)}w_0^{-2\tau}|\nabla^k u|^2dx\leq
    C\int\sigma_0^{5\beta+6}w_0^{-2\tau}|\mathcal{L} u|^2dx,
\end{equation}
for every $u\in C^\infty_0(B_{r_1}^{\sigma_0}\setminus\{0\})$ and
for every $\tau\geq\overline{\tau}$, where $r_1$, $0<r_1<1$, $C$
and $\overline{\tau}$ only depend on $\lambda$, $\Lambda$ and
$\Lambda_1$.
\end{theo}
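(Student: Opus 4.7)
The plan is to derive the Carleman estimate for $\mathcal{L} = L_2 L_1$ by iterating Theorem \ref{theo:4-4.1} and then recovering the higher derivative norms through Proposition \ref{prop:5-4.2}. First I would perform the linear change of variables $\Psi(x) = R_2 H R_1 x$ described just before the theorem so that $\widetilde{g}_1^{-1}(0) = I$ and $\widetilde{g}_2^{-1}(0) = \mathrm{diag}(\alpha_1, \ldots, \alpha_n)$ with $\mu_*/\nu^* \leq \alpha_i \leq \mu^*/\nu_*$. By formula \eqref{2-24}, this transforms $\mathcal{L}$ into $\widetilde{\mathcal{L}} + Q$ with a third order perturbation $Q$, which will be absorbed by the LHS for $\tau$ large once the main inequality is in place. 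Therefore I reduce to proving the estimate for $\widetilde{\mathcal{L}}$, and drop tildes from now on.

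The key choice is $\Gamma_0 = \mathrm{diag}(1/\sqrt{\alpha_1}, \ldots, 1/\sqrt{\alpha_n})$. A direct computation using Proposition \ref{propRem} shows that with this $\Gamma_0$, \emph{both} Carleman exponents $\omega_0^{(1)}$ (for $L_1$) and $\omega_0^{(2)}$ (for $L_2$) equal $\sqrt{\alpha^*/\alpha_*} - 1$, which is bounded by $\sqrt{\mu^*\nu^*/(\mu_*\nu_*)} - 1 < \beta$. Hence Theorem \ref{theo:4-4.1} applies to both operators with the same weight $w_0 = e^{-\sigma_0^{-\beta}}$. The first application, directly to $u$, yields
\begin{equation*}
\tau \int \sigma_0^{\beta} w_0^{-2\tau} |\nabla u|^2 + \tau^3 \int \sigma_0^{-\beta-2} w_0^{-2\tau} u^2 \leq C \int \sigma_0^{2\beta+2} w_0^{-2\tau} (L_1 u)^2.
\end{equation*}
The crucial move is the second application: instead of applying Theorem \ref{theo:4-4.1} directly to $v = L_1 u$, I apply it to $\widetilde{v} := \sigma_0^{c} v$ with $c = (3\beta+4)/2$. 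Expanding $L_2(\sigma_0^c v) = \sigma_0^c (\mathcal{L}u) + 2 g_2^{ij}\partial_i \sigma_0^c \partial_j v + (L_2 \sigma_0^c) v$, the RHS of the new Carleman estimate produces precisely $\int \sigma_0^{2\beta+2+2c} w_0^{-2\tau}(\mathcal{L}u)^2 = \int \sigma_0^{5\beta+6} w_0^{-2\tau}(\mathcal{L}u)^2$, plus lower order terms in $|\nabla v|$ and $|v|$ whose weights are of the form $\sigma_0^{5\beta+4}$ and $\sigma_0^{5\beta+2}$; these are absorbed by the LHS of the same inequality (which controls $\tau^3 \int \sigma_0^{2\beta+2} v^2$ and $\tau \int \sigma_0^{4\beta+4}|\nabla v|^2$) using $\sigma_0 \leq r_1 < 1$ and $\tau \geq \overline{\tau}$. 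This gives $\int \sigma_0^{2\beta+2} w_0^{-2\tau} v^2 \leq C \tau^{-3} \int \sigma_0^{5\beta+6} w_0^{-2\tau}(\mathcal{L}u)^2$. Substituting into the first Carleman estimate and multiplying by $\tau^3$ yields the target bounds for $k=0$ and $k=1$.

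For the higher derivative terms $k=2,3$ I would apply Proposition \ref{prop:5-4.2}. With $a^2 = \sigma_0^{3\beta+2} w_0^{-2\tau}$, inequality \eqref{2-22} controls $\int \sigma_0^{3\beta+2} w_0^{-2\tau}|\nabla^2 u|^2$ by $\int \sigma_0^{3\beta+2} w_0^{-2\tau} v^2$ (which is dominated by $\int \sigma_0^{2\beta+2} w_0^{-2\tau} v^2$ using $\sigma_0 < 1$) plus a $\tau^2 \int \sigma_0^{\beta} w_0^{-2\tau}|\nabla u|^2$ term already controlled by the previous step. For $k=3$, with $a^2 = \sigma_0^{5\beta+4} w_0^{-2\tau}$, inequality \eqref{1-23} produces $\int \sigma_0^{5\beta+4} w_0^{-2\tau} |\mathcal{L}u||\nabla^2 u|$ plus a $\tau^2 \int \sigma_0^{3\beta+2} w_0^{-2\tau}|\nabla^2 u|^2$ term; the AM--GM split $|\mathcal{L}u||\nabla^2 u| \leq \tfrac{\sigma_0^{-2}}{2}|\nabla^2 u|^2 + \tfrac{\sigma_0^{2}}{2}|\mathcal{L}u|^2$ converts this into $\tfrac{1}{2}\int \sigma_0^{5\beta+6}(\mathcal{L}u)^2 + \tfrac{1}{2}\int \sigma_0^{5\beta+2}|\nabla^2 u|^2$, and the last term is absorbed using the $k=2$ bound since $\sigma_0^{5\beta+2} \leq r_1^{2\beta}\sigma_0^{3\beta+2}$.

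The main obstacle I expect is Step 2, the simultaneous balancing of the two Carleman exponents through a single choice of $\Gamma_0$; and the bookkeeping in Step 4 involving the multiplier $\sigma_0^c$, where one must verify that every lower order error contribution (both from the shift $\sigma_0^c v$ and from the perturbation $Q$ generated by $\Psi$) can be absorbed by the LHS for $\tau \geq \overline{\tau}$ and $r_1$ sufficiently small. Once this is done, summing the four contributions and translating back via $\Psi^{-1}$ (so that the quadratic form $\Gamma_0$ in the original coordinates satisfies \eqref{2-26}) gives \eqref{1-27}.
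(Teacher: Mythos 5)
Your proposal is correct and follows essentially the same route as the paper: the same linear normalization $\Psi=R_2HR_1$, the same choice $\Gamma_0=\mathrm{diag}(1/\sqrt{\mu_1},\dots,1/\sqrt{\mu_n})$ with Proposition \ref{propRem} giving $\omega_0^1=\omega_0^2=\sqrt{\mu_n/\mu_1}-1$, the same iteration of Theorem \ref{theo:4-4.1} applied to $\sigma_0^{\frac{3}{2}\beta+2}L_1u$, and the same use of Proposition \ref{prop:5-4.2} for the $k=2,3$ terms. The only differences are cosmetic bookkeeping (you absorb the $\sigma_0^{5\beta+4}|\nabla v|^2$ error into the left-hand side of the same Carleman inequality, whereas the paper applies the estimate once more to $\sigma_0^{2\beta+2}v$; and since $\Psi$ is linear the perturbation $Q$ you mention is actually absent).
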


\begin{proof} By the comments preceding the statement of the theorem, without loosing of generality we can assume
that $g_1^{ij}(0)=\delta^{ij}$ and $g_2^{-1}(0)$ is of diagonal
form, say $g_2^{-1}(0)=\text{diag}\{\mu_1,\mu_2,...,\mu_n\}$,
where $0<\mu_1\leq \mu_2\leq...\leq\mu_n$. We denote by
$\Gamma=\{\gamma_{ij}\}_{i,j=1}^n$ a symmetric matrix that we
shall choose later on, and by $m_*$ and $m^*$ the minimum and the
maximum eigenvalues of $\Gamma$ respectively, with $m_*>0$. Let us
set $\sigma(x)=(\Gamma x\cdot x)^{1/2}$. We denote by $S_k^{(0)}$,
$k=1,2$, the matrix $S_\sigma^{g_k(0)}$ introduced in
\eqref{*-11}. We denote by $\omega_0^k$ the numbers (compare with
\eqref{2-11})
\begin{equation}
    \label{1-28}
    \omega_0^k=\sup\left\{-(S_k^{(0)}\xi)\cdot\xi\ |\ g_k^{ij}(0)\xi_i\xi_j=1,g_k^{ij}(0)\partial_i\sigma(x)\xi_j=0,
    x\in\R^n\setminus\{0\}\right\}.
\end{equation}
Let $\beta$ be a positive number such that
$\beta>\max\{\omega_0^1,\omega_0^2\}$ and let $V\in
C^\infty_0(B^\sigma_{r_0}\setminus\{0\})$, where $r_0$ has been
defined in Theorem \ref{theo:4-4.1}. Since
\begin{equation}
    \label{10-29}
    |\Delta_{g_k}V|\leq|L_kV|+C|\nabla V|, \quad k=1,2,
\end{equation}
where $C$ only depends on $\Lambda$, by \eqref{2-15} we have that
there exists $\tau_2$, only depending on $\lambda$, $\Lambda$,
$m_*$, $m^*$ and $\beta$ such that for $k=1,2,$ and for every
$\tau\geq \tau_2$
\begin{equation}
    \label{1-29}
    \tau\int\sigma^\beta w^{-2\tau}|\nabla V|^2+\tau^3\int\sigma^{-\beta-2}w^{-2\tau} V^2\leq
     C \int\sigma^{2\beta+2}w^{-2\tau} |L_kV|^2.
\end{equation}
Now we iterate inequality \eqref{1-29}. First we notice that, by a
standard density property, inequality \eqref{1-29} is valid for
every $V\in H^2_0(B^\sigma_{r_0}\setminus\{0\})$. Let $u$ be an
arbitrary function belonging to
$C^\infty_0(B^\sigma_{r_0}\setminus\{0\})$ and let us set $v=L_1
u$. By applying inequality \eqref{1-29} to the function
$V=\sigma^{\frac{3}{2}\beta+2}v$, we get
\begin{multline}
    \label{1-30}
    \tau^3\int\sigma^{2\beta+2}w^{-2\tau}v^2=\tau^3\int\sigma^{-\beta-2}w^{-2\tau} (\sigma^{\frac{3}{2}\beta+2}v)^2\leq\\
     \leq C \int\sigma^{2\beta+2}w^{-2\tau} |L_2(\sigma^{\frac{3}{2}\beta+2}v)|^2,
\end{multline}
for every $\tau\geq\tau_2$.

Now observe that
\begin{equation}
    \label{2-30}
    |L_2(\sigma^{\frac{3}{2}\beta+2}v)|\leq \sigma^{\frac{3}{2}\beta+2}|L_2v|+C\sigma^{\frac{3}{2}\beta+1}|\nabla v|+C\sigma^{\frac{3}{2}\beta}|v|,
\end{equation}
where $C$ only depends on $\lambda$, $\Lambda$, $m_*$, $m^*$ and
$\beta$. By using \eqref{2-30} to estimate {}from above the right
hand side of \eqref{1-30}, we have that there exists
$\tau_3\geq\tau_2$ such that, for every $\tau\geq\tau_3$,
\begin{equation}
    \label{3-30}
    \tau^3\int\sigma^{2\beta+2}w^{-2\tau}v^2\leq
     C \int\sigma^{5\beta+6}w^{-2\tau} |L_2v|^2+C \int\sigma^{5\beta+4}w^{-2\tau} |\nabla v|^2,
\end{equation}
where $C$ and $\tau_3$ only depend on $\lambda$, $\Lambda$, $m_*$,
$m^*$ and $\beta$.

Now we estimate {}from above the second term in the right hand
side of \eqref{3-30}. To this aim we apply inequality \eqref{1-29}
to the function $V=\sigma^{2\beta+2}v$ and we have
\begin{equation}
    \label{1-31}
    \tau\int\sigma^{\beta}w^{-2\tau}|\nabla(\sigma^{2\beta+2}v)|^2\leq
     C \int\sigma^{2\beta+2}w^{-2\tau} |L_2(\sigma^{2\beta+2}v)|^2,
\end{equation}
for every $\tau\geq\tau_2$.

Taking into account that
\begin{equation}
    \label{10-31}
    |L_2(\sigma^{2\beta+2}v)|\leq \sigma^{2\beta+2}|L_2v|+C\sigma^{2\beta+1}|\nabla v|+C\sigma^{2\beta}|v|,
\end{equation}
and
\begin{equation}
    \label{20-31}
    |\nabla(\sigma^{2\beta+2}v)|^2\geq \frac{1}{2}\sigma^{4\beta+4}|\nabla v|^2-C\sigma^{4\beta+2}v^2,
\end{equation}
where $C$ only depends on $\lambda$, $\Lambda$, $m_*$, $m^*$ and
$\beta$, we have, by \eqref{1-31},
\begin{equation}
    \label{2-31}
    \tau\int\sigma^{5\beta+4}w^{-2\tau}|\nabla v|^2\leq
     C \int\sigma^{6\beta+6}w^{-2\tau} |L_2v|^2+C\tau\int\sigma^{5\beta+2}w^{-2\tau}v^2,
\end{equation}
for every $\tau\geq\tau_2$, where $C$ only depends on $\lambda$,
$\Lambda$, $m_*$, $m^*$ and $\beta$.

Now we use \eqref{2-31} to estimate {}from above the second term
on the right hand side of \eqref{3-30} and we have that there
exists $\tau_4\geq\tau_3$ such that
\begin{equation}
    \label{1-32}
    \int\sigma^{2\beta+2}w^{-2\tau}v^2\leq
     \frac{C}{\tau^3} \int\sigma^{5\beta+6}w^{-2\tau} |L_2v|^2,
\end{equation}
for every $\tau\geq\tau_4$, where $C$ and $\tau_4$ only depend on
$\lambda$, $\Lambda$, $m_*$, $m^*$ and $\beta$. Recalling that
$v=L_1u$ and by using \eqref{1-29} for $V=u$ and $k=1$,
\eqref{1-32} yields
\begin{equation}
    \label{2-32}
    \tau^6\int\sigma^{-\beta-2}w^{-2\tau}u^2+ \tau^4\int\sigma^{\beta}w^{-2\tau}|\nabla u|^2\leq
    C \int\sigma^{5\beta+6}w^{-2\tau} |L_2L_1u|^2,
\end{equation}
for every $\tau\geq\tau_4$, where $C$ only depends on $\lambda$,
$\Lambda$, $m_*$, $m^*$ and $\beta$.

Now we prove that
\begin{equation}
    \label{3-32}
    \tau^2\int\sigma^{3\beta+2}w^{-2\tau}|\nabla^2u|^2+\int\sigma^{5\beta+4}w^{-2\tau}|\nabla^3 u|^2\leq
    C \int\sigma^{5\beta+6}w^{-2\tau} |L_2L_1u|^2,
\end{equation}
for every $\tau\geq\tau_4$, where $C$ only depends on $\lambda$,
$\Lambda$, $m_*$, $m^*$ and $\beta$.

Concerning the term with the second order derivatives on the left hand
side of \eqref{3-32}, we can estimate it by using \eqref{2-22}
with $a=(\sigma^{3\beta+2}w^{-2\tau})^{\frac{1}{2}}$ and $k=1$,
obtaining
\begin{equation}
    \label{1-33}
    \int\sigma^{3\beta+2}w^{-2\tau}|\nabla^2u|^2\leq
    C \int\sigma^{3\beta+2}w^{-2\tau} |L_1u|^2+C \tau^2\int\sigma^{\beta}w^{-2\tau} |\nabla u|^2,
\end{equation}
where $C$ only depends on $\lambda$, $\Lambda$, $m_*$, $m^*$ and
$\beta$.

By using \eqref{1-29} for $V=u$ and $k=1$ to estimate {}from above
the second integral on the right hand side of \eqref{1-33} we get
\begin{equation}
    \label{2-33}
    \int\sigma^{3\beta+2}w^{-2\tau}|\nabla^2u|^2\leq
    C\tau \int\sigma^{2\beta+2}w^{-2\tau} |L_1u|^2,
\end{equation}
for every $\tau\geq\tau_2$, where $C$ and $\tau_2$ only depend on
$\lambda$, $\Lambda$, $m_*$, $m^*$ and $\beta$.

Now, by \eqref{1-32} with $v=L_1u$ and by \eqref{2-33}, we have,
for every $\tau\geq\tau_4$,
\begin{equation}
    \label{1-34}
    \tau^2\int\sigma^{3\beta+2}w^{-2\tau}|\nabla^2u|^2\leq
    C \int\sigma^{5\beta+6}w^{-2\tau} |L_2L_1u|^2,
\end{equation}
where $C$ only depends on $\lambda$, $\Lambda$, $m_*$, $m^*$ and
$\beta$.

Now we estimate {}from above the term with the third order
derivatives on the left hand side of \eqref{3-32}. By applying
\eqref{1-23} with $a=(\sigma^{5\beta+4}w^{-2\tau})^{\frac{1}{2}}$,
we have
\begin{equation}
    \label{2-34}
    \int\sigma^{5\beta+4}w^{-2\tau}|\nabla^3u|^2\leq
    C \int\sigma^{5\beta+4}w^{-2\tau} |L_2L_1u||\nabla^2u|+C\tau^2\int\sigma^{3\beta+2}w^{-2\tau} |\nabla^2u|^2,
\end{equation}
where $C$ only depends on $\lambda$, $\Lambda$, $m_*$, $m^*$ and
$\beta$.

Noticing that
\begin{multline}
    \label{10-34}
    \sigma^{5\beta+4}|L_2L_1u||\nabla^2u|=\left(\sigma^{\frac{3}{2}\beta+1}|\nabla^2u|\right)
    \left(\sigma^{\frac{7}{2}\beta+3}|L_2L_1u|\right)\leq\\
    \leq\frac{1}{2}\left(\sigma^{3\beta+2}|\nabla^2u|^2+\sigma^{7\beta+6}|L_2L_1u|^2\right),
\end{multline}
by \eqref{1-34} and \eqref{2-34} we obtain the desired inequality
\eqref{3-32}.

By \eqref{2-32} and \eqref{3-32} we have
\begin{equation}
    \label{1-35}
    \sum_{k=0}^3\tau^{6-2k}\int\sigma^{-\beta-2+k(2\beta+2)}w^{-2\tau}|\nabla^ku|^2\leq
    C\int\sigma^{5\beta+6}w^{-2\tau}|L_2L_1u|^2,
\end{equation}
for every $\tau\geq\tau_4$, where $\tau_4$ and $C$ only depend on
$\lambda$, $\Lambda$, $m_*$, $m^*$ and $\beta$, for every $u\in
C^\infty_0(B^\sigma_{r_0}\setminus\{0\})$.

Now we choose $\Gamma=\Gamma_0:=\text{diag}\{\frac{1}{\sqrt
\mu_1},...,\frac{1}{\sqrt \mu_n}\}$,
$\sigma(x)=\sigma_0(x):=\left(\Gamma_0x \cdot x\right)^{1/2}$,
$w(x)=w_0(x)$, where $w_0(x)$ is defined by \eqref{3-26}. By
Proposition \ref{propRem} we have
$\omega_0^1=\omega_0^2=\sqrt{\frac{\mu_n}{\mu_1}}-1$, hence
estimate \eqref{1-35} holds for
$\beta>\sqrt{\frac{\mu_n}{\mu_1}}-1$. Coming back to the old
variables we obtain \eqref{1-27}.
\end{proof}
\begin{theo}
   \label{theo:7-4.2} Let $\mathcal{L}$ be the operator defined by \eqref{1*-22}. Let $\nu_*$, $\nu^*$,
   $\mu_*$, $\mu^*$ be as defined in Theorem \ref{theo:6-4.2}. Let us assume that $u\in H^4(B_R)$ satisfies
   the inequality
\begin{equation}
    \label{1-39}
    |\mathcal{L}u|\leq N\sum_{k=0}^3 R^{-4+k}|\nabla^k u|, \quad \hbox{in } B_R,
\end{equation}
where $N$ and $R$ are positive numbers. Let
$\beta>\sqrt{\frac{\mu^*\nu^*}{\mu_*\nu_*}}-1$. There exist
positive constants $s_1\in(0,1)$ and $C\geq 1$, $C$ and $s_1$ only
depending on $\lambda$, $\Lambda$, $\Lambda_1$ and $N$ such that,
for every $\rho_1\in(0,s_1R)$ and for every $r$, $\rho$ satisfying
$r<\rho<\frac{\rho_1\lambda^2}{2}$,
\begin{multline}
    \label{2-39}
    \sum_{k=0}^3 \rho^{2k}\int_{B_\rho}|\nabla^k u|^2\leq
    C\max\left\{1,\left(\frac{\rho}{R}\right)^{-(5\beta-2)}\right\}
    e^{C\left((\lambda^{-1}\rho)^{-\beta}-\left(\frac{\rho_1\lambda}{2}\right)^{-\beta}\right)R^\beta}\cdot\\
    \cdot\left(\left(\frac{r}{R}\right)^{5\beta-2}\sum_{k=0}^3 r^{2k}\int_{B_{r}}|\nabla^k u|^2\right)^{\vartheta_0}\cdot
    \left(\left(\frac{\rho_1}{R}\right)^{5\beta-2}\sum_{k=0}^3 \rho_1^{2k}\int_{B_{\rho_1}}|\nabla^k u|^2\right)^{1-\vartheta_0}
    ,
\end{multline}
where
\begin{equation}
    \label{3-39}
    \vartheta_0=\frac{(\lambda^{-1}\rho)^{-\beta}-\left(\frac{\lambda\rho_1}{2}\right)^{-\beta}}
    {\left(\frac{\lambda r}{2}\right)^{-\beta}-\left(\frac{\lambda\rho_1}{2}\right)^{-\beta}}.
\end{equation}
\end{theo}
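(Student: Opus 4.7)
The strategy is the classical one: apply the Carleman estimate of Theorem \ref{theo:6-4.2} to $\chi u$, where $\chi$ is a cutoff supported in an annular region, then balance the resulting weighted inequality by optimizing over $\tau$. First, I change to the $\sigma_0$-coordinate picture so that the balls appearing in the Carleman weight make geometric sense; since $\lambda|x|\leq \sigma_0(x)\leq\lambda^{-1}|x|$ by \eqref{2-26}, $\sigma_0$-balls and Euclidean balls are comparable up to the factor $\lambda$, which accounts for the specific radii $\lambda^{-1}\rho$, $\tfrac{\lambda\rho_1}{2}$, $\tfrac{\lambda r}{2}$ appearing in \eqref{3-39}. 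Choose a smooth cutoff $\chi$ with $\chi\equiv 0$ on $B_{r/2}^{\sigma_0}$ and outside $B_{\rho_1}^{\sigma_0}$, $\chi\equiv 1$ on an annulus covering $B_\rho^{\sigma_0}$, and with $|\nabla^j\chi|\lesssim r^{-j}$ in the inner transition zone $T_{\text{in}}$ and $|\nabla^j\chi|\lesssim\rho_1^{-j}$ in the outer one $T_{\text{out}}$, for $j\le 4$. Then $v=\chi u$ lies in (the closure of) $C_0^\infty(B_{r_1}^{\sigma_0}\setminus\{0\})$, provided $\rho_1<s_1 R$ with $s_1$ small enough that $\sigma_0(B_{\rho_1}^{\sigma_0})\subset(0,r_1)$.

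Expanding $\mathcal{L}v=\chi\mathcal{L}u+[\mathcal{L},\chi]u$ and substituting into \eqref{1-27}, the contribution of $\chi\mathcal{L}u$ is bounded via \eqref{1-39} by $CN^2\int\sigma_0^{5\beta+6}w_0^{-2\tau}\sum_{k=0}^3 R^{-8+2k}|\nabla^k u|^2$, which, once $\tau$ is large enough relative to $N$ (here the comparison $R^{-8+2k}\lesssim \tau^{6-2k}\sigma_0^{-6\beta-8+2k(2\beta+2)}$ on the support of $\chi$ uses that $\sigma_0\le \lambda^{-1}\rho_1\le s_1 \lambda^{-1} R$ with $s_1$ small), can be absorbed in the left-hand side of \eqref{1-27}. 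What remains on the right is the commutator term, a differential operator in $u$ of order at most three with coefficients supported in $T_{\text{in}}\cup T_{\text{out}}$ and bounded by $r^{-4+k}$ on $T_{\text{in}}$, $\rho_1^{-4+k}$ on $T_{\text{out}}$ for derivatives of order $k\le 3$.

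On the left-hand side I keep only the $k=0$ piece restricted to $B_\rho^{\sigma_0}$, where $\chi\equiv 1$, and replace $w_0^{-2\tau}$ by its infimum there. On the right I replace $w_0^{-2\tau}$ by its supremum on each transition zone, separating the inner and outer contributions. Since $\sigma_0^{-\beta}$ is monotone decreasing in $\sigma_0$, the infimum on $B_\rho^{\sigma_0}$ corresponds to $\sigma_0=\lambda^{-1}\rho$ (after converting to Euclidean balls), the supremum on $T_{\text{in}}$ to $\sigma_0=\tfrac{\lambda r}{2}$, and the supremum on $T_{\text{out}}$ to $\sigma_0=\tfrac{\lambda\rho_1}{2}$, which yields an inequality of the schematic form
\begin{equation*}
\|u\|_{L^2(B_\rho)}^2 \le C\, e^{2\tau A}\, \mathcal{U}_r + C\, e^{-2\tau B}\, \mathcal{U}_{\rho_1},
\end{equation*}
with $A=(\tfrac{\lambda r}{2})^{-\beta}-(\lambda^{-1}\rho)^{-\beta}$, $B=(\lambda^{-1}\rho)^{-\beta}-(\tfrac{\lambda\rho_1}{2})^{-\beta}$, and $\mathcal{U}_\rho=\sum_k \rho^{2k}\|\nabla^k u\|_{L^2(B_\rho)}^2$ (up to harmless prefactors, which produce the $(r/R)^{5\beta-2}$ and $(\rho_1/R)^{5\beta-2}$ powers from the $\sigma_0^{5\beta+6}$ weight on the right of \eqref{1-27}). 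Finally, optimizing by choosing $\tau$ so that $e^{2\tau(A+B)}=\mathcal{U}_{\rho_1}/\mathcal{U}_r$ (and handling separately the case in which this $\tau$ falls below $\overline\tau$ by a trivial a priori estimate) produces the three-sphere inequality with $\vartheta_0=B/(A+B)$, matching \eqref{3-39}.

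The main obstacle is the commutator, not because it is hard to estimate algebraically but because $[\mathcal{L},\chi]u$ contains genuinely fourth-order derivatives of $\chi$ multiplying $u$ as well as combinations such as $\nabla^3\chi\cdot\nabla u$, $\nabla^2\chi\cdot\nabla^2 u$, $\nabla\chi\cdot\nabla^3 u$; only derivatives of $u$ up to order three appear in the Carleman left-hand side, so the bookkeeping must be set up so that all commutator terms fit inside the $H^3$-type norms $\mathcal{U}_r$ and $\mathcal{U}_{\rho_1}$. This is exactly the role of Proposition \ref{prop:5-4.2}, which trades $L^2$ bounds on $\nabla^2 u$ and $\nabla^3 u$ against $L^2$ bounds on $L_k u$ and $\mathcal{L}u$ on slightly enlarged balls; used iteratively together with \eqref{1-39}, it ensures that the $H^3$ norms on the transition shells are controlled, after slight enlargement, by the full $H^3$ norms on $B_r$ and $B_{\rho_1}$ appearing in \eqref{2-39}. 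Equally delicate is the absorption step, which requires a careful comparison between $R^{-8+2k}$ and the Carleman weights $\tau^{6-2k}\sigma_0^{-\beta-2+k(2\beta+2)}$; it is precisely this comparison that dictates the smallness condition $\rho_1<s_1 R$ with $s_1$ depending on $N$ (in addition to $\lambda,\Lambda,\Lambda_1$).
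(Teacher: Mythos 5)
Your proposal follows essentially the same route as the paper: a cutoff $\xi=\eta(\sigma_0(x))$ supported in the $\sigma_0$-annulus, application of the Carleman estimate of Theorem \ref{theo:6-4.2} to $\xi u$, absorption of the right-hand side of \eqref{1-39} for $\tau$ large and $\rho_1/R$ small, comparison of the weight $h_\tau(t)=t^{5\beta-2}e^{2\tau t^{-\beta}}$ on the two transition shells, and optimization over $\tau$ with the trivial bound $J(\rho)\le J(\rho_1)$ when the optimal $\tau$ falls below the threshold. The only inessential discrepancy is your appeal to Proposition \ref{prop:5-4.2} for the commutator: since $[\mathcal{L},\xi]u$ involves only $\nabla^k u$ with $k\le 3$, those terms already sit inside the $H^3$-type quantities $J(r)$ and $J(\rho_1)$ and no Caccioppoli-type trading is needed at this stage.
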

\begin{proof}
First we observe that, denoting
$\widetilde{g}_k^{-1}(x)=g_k^{-1}(Rx)$,
$\widetilde{L}_k=\widetilde{g}_k^{ij}(x)\partial^2_{ij}$, $k=1,2$,
$\widetilde{\mathcal{L}}=\widetilde{L}_2\widetilde{L}_1$,
$\widetilde{u}(x)=u(Rx)$, $x\in B_1$, inequality \eqref{1-39}
implies
\begin{equation}
    \label{1-40}
    |\widetilde{\mathcal{L}}\widetilde{u}|\leq N\sum_{k=0}^3|\nabla^k \widetilde{u}|, \quad\hbox{in }B_1.
\end{equation}
For simplicity of notation we shall omit the symbol $\tilde{}$ .
Let us introduce the following notation
\begin{equation}
    \label{2-40}
    J(\rho)= \sum_{k=0}^3\rho^{2k}\int_{B_\rho^{\sigma_0}}|\nabla^k u|^2,
\end{equation}
where, we recall, $B_\rho^{\sigma_0}=\{x\in\R^n\ |\
\sigma_0(x)<\rho\}$ and $\sigma_0$ has been defined in Theorem
\ref{theo:6-4.2}. Notice that \eqref{2-26} gives $B_{\lambda
r}\subset B_r^{\sigma_0}\subset B_{\frac{r}{\lambda}}$, for every
$r>0$. In particular inequality \eqref{1-40} is satisfied in
$B_\lambda^{\sigma_0}$. Denote by $R_1=\min\{r_1,\lambda\}$, where
$r_1$ has been introduced in Theorem \ref{theo:6-4.2}. Let
$\rho_1\in(0,R_1]$ and $r\in \left(0,\frac{\rho_1}{2}\right)$. Let
$\eta\in C^4_0(\R)$ such that $0\leq\eta\leq 1$, $\eta\equiv 1$ in
$\left(r,\frac{\rho_1}{2}\right)$, $\eta\equiv 0$ in $\left(0,
\frac{r}{2}\right)\cup(\rho_1,R_1)$,
$\left|\frac{d^k}{dt^k}\eta\right|\leq \frac{C}{r^k}$ in
$[\frac{r}{2},r]$, $\left|\frac{d^k}{dt^k}\eta\right|\leq
\frac{C}{\rho_1^k}$ in $\left[\frac{\rho_1}{2},\rho_1\right]$ for
$k=0,1,...,4$, where $C$ is an absolute constant. In addition, let
$\xi(x)=\eta(\sigma_0(x))$. By a standard density theorem,
inequality \eqref{1-27} holds for the function $\xi(x)u(x)$.

Denote
\begin{equation}
    \label{10-47}
   h_\tau(t)=t^{5\beta-2}e^{\frac{2\tau}{t^\beta}},\quad t\in(0,1).
\end{equation}
By standard calculations, it is simple to derive that there exist
$\overline{\tau}_1\geq\overline{\tau}$, $C$, $s_0\in(0,R_1)$, only
depending on $\lambda$, $\Lambda$, $\Lambda_1$, $\beta$ and $N$,
such that if $\rho_1\leq s_0$, $r<\rho<\frac{\rho_1}{2}$ and
$\tau\geq \overline{\tau}_1$ then
\begin{equation}
    \label{20-47}
   h_\tau(\rho)J(\rho)\leq C h_\tau\left(\frac{r}{2}\right)J(r)+C h_\tau\left(\frac{\rho_1}{2}\right)J(\rho_1).
\end{equation}
Hence
\begin{equation}
    \label{1-47}
   J(\rho)\leq C \left(\left(\frac{r/2}{\rho}\right)^{5\beta-2}
   e^{2\tau\left(-\frac{1}{\rho^\beta}+\frac{1}{(r/2)^\beta}\right)}J(r)+
   \left(\frac{\rho_1/2}{\rho}\right)^{5\beta-2}
   e^{2\tau\left(-\frac{1}{\rho^\beta}+\frac{1}{(\rho_1/2)^\beta}\right)}J(\rho_1)\right),
\end{equation}
for every $\tau\geq\overline{\tau}_1$.

Let us denote
\begin{equation}
    \label{10-50}
    \widetilde{\vartheta}_0=\frac{\rho^{-\beta}-\left(\frac{\rho_1}{2}\right)^{-\beta}}
    {\left(\frac{ r}{2}\right)^{-\beta}-\left(\frac{\rho_1}{2}\right)^{-\beta}},
\end{equation}
\begin{equation}
    \label{20-50}
    \alpha_0=\frac{1}{2}\frac{\log\left(\left(\frac{\rho_1}{r}\right)^{5\beta-2}\frac{J(\rho_1)}{J(r)}\right)}
    {\left(\frac{r}{2}\right)^{-\beta}-\left(\frac{\rho_1}{2}\right)^{-\beta}}.
\end{equation}
If $\alpha_0\geq \overline{\tau}_1$ then we choose $\tau=\alpha_0$
in \eqref{1-47} obtaining
\begin{equation}
    \label{1-51}
   J(\rho)\leq \frac{C}{\rho^{5\beta-2}}\left(r^{5\beta-2}J(r)\right)^{\vartheta_0}
   \left(\rho_1^{5\beta-2}J(\rho_1)\right)^{1-\vartheta_0},
\end{equation}
where $C$ only depends on $\lambda$, $\Lambda$, $\Lambda_1$, $N$
and $\beta$.

If $\alpha_0<\overline{\tau}_1$ then we have trivially
\begin{multline}
    \label{2-51}
   J(\rho)\leq J(\rho_1)=\left(J(\rho_1)\right)^{\vartheta_0}\left(J(\rho_1)\right)^{1-\vartheta_0}\leq\\
   \leq\frac{e^{2\overline{\tau}_1\left(\rho^{-\beta}-\left(\frac{\rho_1}{2}\right)^{-\beta}\right)}}{\rho_1^{5\beta-2}}
   \left(r^{5\beta-2}J(r)\right)^{\vartheta_0}
   \left(\rho_1^{5\beta-2}J(\rho_1)\right)^{1-\vartheta_0}.
\end{multline}
By \eqref{1-51} and \eqref{2-51} and scaling the variables we get \eqref{2-39}.
\end{proof}
\begin{cor}[Unique continuation property]
   \label{cor:8-4.2}
   Let $\mathcal{L}$ be the same operator of Theorem \ref{theo:7-4.2} and let $\nu_*$, $\nu^*$,
   $\mu_*$, $\mu^*$ be as defined in Theorem \ref{theo:6-4.2}. Let us assume that
   $u\in H^4(B_R)$ satisfies the inequality
\begin{equation}
    \label{1-53}
    |\mathcal{L}u|\leq N\sum_{k=0}^3 R^{-4+k}|\nabla^k u|,\quad\hbox{in }B_R,
\end{equation}
where $N$ and $R$ are positive numbers.

Assume that
\begin{equation}
    \label{2-53}
    \int_{B_r} u^2=O\left(e^{-\frac{C_0}{r^\kappa}}\right), \quad \hbox{as } r\rightarrow 0,
\end{equation}
where $C_0>0$ and $\kappa>\sqrt{\frac{\mu^*\nu^*}{\mu_*\nu_*}}-1$.

Then we have
\begin{equation}
    \label{3-53}
    u\equiv 0 \quad \hbox{in } B_R.
\end{equation}
\end{cor}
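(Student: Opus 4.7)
The plan is to apply the three sphere inequality \eqref{2-39} of Theorem~\ref{theo:7-4.2} with the inner radius $r\to 0$, and to exhibit the hypothesis \eqref{2-53} as forcing the quantity on a fixed intermediate ball to vanish. First I would pick $\beta$ with $\sqrt{\mu^*\nu^*/(\mu_*\nu_*)} - 1 < \beta < \kappa$, which is possible by the assumption on $\kappa$. Fix $\rho_1 \in (0, s_1 R)$ and $\rho \in (0, \lambda^2\rho_1/2)$, and write $J(t) = \sum_{k=0}^3 t^{2k}\int_{B_t}|\nabla^k u|^2$. A standard Caccioppoli-type iteration, applied to the two second-order factors of $\mathcal{L}$ via Proposition~\ref{prop:5-4.2} with suitable cutoffs, together with the differential inequality \eqref{1-53}, yields the interior estimate
\begin{equation*}
J(r) \leq C \int_{B_{2r}} u^2, \qquad r \leq R/2,
\end{equation*}
so the hypothesis \eqref{2-53} upgrades to $J(r) = O(e^{-C_0'/r^\kappa})$ as $r \to 0$, for some $C_0' > 0$.

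The key computation is the asymptotic behaviour of $\vartheta_0$ defined in \eqref{3-39}: since $(\lambda r/2)^{-\beta}\to\infty$ while the other terms stay bounded (the assumption $\rho < \lambda^2\rho_1/2$ ensures that the numerator is a positive constant), one obtains $\vartheta_0 \sim c_1\,r^\beta$ for some $c_1 = c_1(\lambda,\rho,\rho_1,\beta) > 0$. Consequently
\begin{equation*}
\vartheta_0 \cdot \frac{C_0'}{r^\kappa} \;\sim\; c_1 C_0'\, r^{\beta-\kappa} \;\longrightarrow\; +\infty \quad \text{as } r\to 0,
\end{equation*}
because $\beta < \kappa$. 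The prefactor in \eqref{2-39} and the outer factor $\bigl(\rho_1^{5\beta-2}J(\rho_1)\bigr)^{1-\vartheta_0}$ remain bounded uniformly in $r$ (as $\rho,\rho_1$ are fixed), while
\begin{equation*}
\Bigl((r/R)^{5\beta-2}J(r)\Bigr)^{\vartheta_0} \leq \exp\Bigl(-\vartheta_0 C_0'/r^\kappa + O(\vartheta_0\log(1/r))\Bigr) \longrightarrow 0.
\end{equation*}
Letting $r\to 0$ in \eqref{2-39} therefore gives $J(\rho)=0$, so $u\equiv 0$ in $B_\rho$.

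To upgrade the vanishing from $B_\rho$ to all of $B_R$, I would use a standard chain-of-balls argument: the hypotheses on $g_1,g_2$ hold on all of $\R^n$, so applying Theorem~\ref{theo:7-4.2} to shifted functions $v(y)=u(y+x_0)$ yields the same three sphere inequality, with uniform constants, for every concentric triple of balls contained in $B_R$. Starting from $B_\rho(0)$ where $u\equiv 0$ and propagating along overlapping balls, the inner factor $J_{x_0}(r)$ is identically zero at each step, forcing $u$ to vanish on the intermediate ball, hence on the next. Thus the zero set of $u$ is both open and (by continuity of $u\in H^4\subset C^0$) closed in the connected set $B_R$, and therefore coincides with $B_R$.

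The analytical heart of the argument, and the step I would take most care over, is the asymptotic balancing of $\vartheta_0 \sim c_1 r^\beta$ against the exponential decay rate $e^{-C_0'/r^\kappa}$; the rest amounts to a Caccioppoli estimate plus a connectedness chain.
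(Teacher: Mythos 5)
Your proposal follows the same strategy as the paper's proof: fix $\rho_1$ and $\rho$, choose $\beta\in\left(\sqrt{\mu^*\nu^*/(\mu_*\nu_*)}-1,\kappa\right)$, observe that $\vartheta_0$ in \eqref{3-39} behaves like $c_1 r^{\beta}$ as $r\to 0$ so that $\vartheta_0/r^{\kappa}\to\infty$, let $r\to 0$ in \eqref{2-39} to conclude $u\equiv 0$ in $B_\rho$, and then propagate by iteration; the analytic heart --- balancing $\vartheta_0\sim c_1r^\beta$ against $e^{-C_0/r^\kappa}$ --- is exactly the paper's. The one place you diverge is in converting the hypothesis \eqref{2-53} on $\int_{B_r}u^2$ into a bound on the full quantity $J(r)=\sum_{k=0}^3 r^{2k}\int_{B_r}|\nabla^k u|^2$. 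You invoke a Caccioppoli-type estimate $J(r)\leq C\int_{B_{2r}}u^2$ via Proposition \ref{prop:5-4.2}, but as stated this does not go through: inequality \eqref{2-22} requires control of $\int a^2|L_1u|^2$, and since $|L_1u|$ is pointwise comparable to $|\nabla^2u|$ this is circular; one would instead need a genuine interior estimate for the fourth-order operator itself, in the spirit of Proposition \ref{prop:11-4.3} but adapted to the product form $L_2L_1$ and to the differential inequality \eqref{1-53}. Such an estimate is true but is real extra work that your sketch does not supply. The paper sidesteps it entirely with the interpolation inequality \eqref{10-54}, $\|u\|_{H^3(B_r)}\leq C \|u\|_{L^2(B_r)}^{1/4}\|u\|_{H^4(B_r)}^{3/4}$, absorbing the a priori finite norm $\|u\|_{H^4(B_R)}$ into the constant: this only produces the fractional power $\|u\|_{L^2(B_r)}^{1/2}$ on the right of \eqref{1-54}, but any positive power of $e^{-C_0/r^\kappa}$ still dominates $\vartheta_0\sim r^\beta$, which is all the argument needs. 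Your chain-of-balls conclusion coincides with the paper's terse ``by iteration the thesis follows''. In summary the proposal is correct in substance; replace the under-justified Caccioppoli step by the interpolation \eqref{10-54} (or prove the interior estimate properly) and it becomes the paper's proof.
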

\begin{proof}
Let us fix $\rho_1\in(0,s_1R)$ and $\rho\in\left(r,\frac{\lambda^2}{2}\rho_1\right)$, where $s_1$ has been
defined in Theorem \ref{theo:7-4.2}. Let
\begin{equation}
    \label{4-53}
    \sqrt{\frac{\mu^*\nu^*}{\mu_*\nu_*}}-1<\beta<\kappa.
\end{equation}
By \eqref{2-39} and by the interpolation inequality
\begin{equation}
    \label{10-54}
    \|u\|_{H^3(B_r)}\leq C \|u\|_{L^2(B_r)}^{\frac{1}{4}}\|u\|_{H^4(B_r)}^{\frac{3}{4}},
\end{equation}
where $C>0$ is an absolute constant, we have
\begin{equation}
    \label{1-54}
    \|u\|^2_{H^3(B_\rho)}\leq C\left(\left(\frac{r}{R}\right)^{5\beta-2}\|u\|_{L^2(B_r)}^{\frac{1}{2}}\right)^{\vartheta_0},
\end{equation}
where $\vartheta_0$ is given by \eqref{3-39} and $C>0$ only
depends on $\lambda$, $\Lambda$, $\Lambda_1$, $N$, $\beta$,
$\rho$, $\rho_1$, $R$ and $\|u\|_{H^4(B_R)}$. By \eqref{2-53} and
\eqref{4-53}, passing to the limit as $r\rightarrow 0$ in
\eqref{1-54}, we obtain $u\equiv 0$ in $B_\rho$. By iteration the
thesis follows.
\end{proof}

\section{Three sphere inequalities for the plate operator\label{Sec4.3}}

In this section we specialize the results of Section \ref{Sec4.2},
in particular we specialize the three sphere inequality proved in
Theorem \ref{theo:7-4.2}, for the plate equation
\begin{equation}
    \label{1-56}
    {\mathcal{L}}u:=\partial_{ij}^2 (C_{ijkl}\partial_{kl}^2 u)=0,
    \quad \hbox{in } B_R,
\end{equation}
where $\{C_{ijkl}(x)\}_{i,j,k,l=1}^{2}$ is a fourth order tensor
that satisfies the hypotheses \eqref{eq:sym-conditions-C-components},
\eqref{eq:3.bound_quantit}, \eqref{eq:3.convex} for $\Omega=B_R$ and the \emph{dichotomy condition}
in $B_R$.

In the following, without loss of generality, we assume $R=1$.

In order to apply Theorem \ref{theo:6-4.2} we need to write the
operator ${\mathcal{L}}$ in the following form
\begin{equation}
    \label{2-57}
    {\mathcal{L}}=L_2 L_1 + \widetilde{Q},
\end{equation}
where $L_1$ and $L_2$ are second order operators which satisfy a
uniform ellipticity condition and whose coefficients belong to
$C^{1,1}(B_1)$ and $\widetilde{Q}$ is a third order operator with
bounded coefficients. In the sequel (Lemma \ref{lem:8-4.3}) we
shall prove that \eqref{2-57} holds true under some additional
assumptions on the tensor $\{C_{ijkl}(x)\}_{i,j,k,l=1}^2$.

Let us denote
\begin{equation}
    \label{2-58}
    p(x;\partial) u = \sum_{h=0}^4 a_{4-h}(x)\partial_1^h
    \partial_2^{4-h}u, \quad \hbox{for every } u\in H^4(B_1),
\end{equation}
where the coefficients $a_i(x)$, $i=0,...,4$, have been defined in \eqref{3.coeff6}, \eqref{3.coeffsmall}.

By \eqref{3.coeff6} we have
\begin{equation}
    \label{3-58}
    {\mathcal{L}}u=p(x;\partial) u + Qu, \quad \hbox{for every } u\in H^4(B_1),
\end{equation}
where $Q$ is a third order operator with bounded coefficients
which satisfies the inequality
\begin{equation}
    \label{4-58}
    |Qu| \leq cM \left ( |\nabla^3 u |+ |\nabla^2 u| \right), \quad \hbox{for every } u\in H^4(B_1),
\end{equation}
and $c$ is an absolute constant. In addition we denote
\begin{equation}
    \label{5-58}
    p(x; \xi) = \sum_{h=0}^4 a_{4-h}(x) \xi_1^h \xi_2^{4-h}, \quad
    x \in \overline{B}_1, \ \xi \in \R^2,
\end{equation}
\begin{equation}
    \label{6-58}
    \widetilde{p}(x; t):= p(x; (t,1))=\sum_{h=0}^4 a_{4-h}(x)t^h, \quad
    x \in \overline{B}_1, \ t \in \R.
\end{equation}
Notice that by \eqref{eq:3.bound_quantit} we have
\begin{equation}
    \label{1-59}
    p(x; \xi) \geq \gamma |\xi|^4, \quad x \in \overline{B}_1, \ \xi \in \R^2,
\end{equation}
\begin{equation}
    \label{2-59}
    \widetilde{p}(x; t) \geq \gamma (t^2+1)^2, \quad x \in \overline{B}_1, \ t \in
    \R.
\end{equation}
Now, for any fixed $x \in \overline{B}_1$, let
$z_k(x)=\alpha_k(x)+i\beta_k(x)$,
$\overline{z}_k(x)=\alpha_k(x)-i\beta_k(x)$ ($k=1,2$) be the
complex solutions to the algebraic equation
$\widetilde{p}(x;z)=0$. Here, $\alpha_k$ and $\beta_k$ are
real-valued functions and $\beta_k(x) >0$, $k=1,2$, for every $x
\in \overline{B}_1$.

We have
\begin{equation}
    \label{3-59}
    p(x; \xi) = p_2(x;\xi)p_1(x;\xi), \quad \hbox{for every } x \in \overline{B}_1, \ \xi \in
    \R^2,
\end{equation}
where
\begin{equation}
    \label{4-59}
    p_k(x; \xi) = g_k^{ij}(x)\xi_i \xi_j, \quad k=1,2, \ x \in \overline{B}_1, \ \xi \in
    \R^2,
\end{equation}
\begin{multline}
    \label{5-59}
    g_k^{11}(x)= \sqrt{a_0(x)}, \ \ g_k^{12}(x)=g_k^{21}(x)=-\alpha_k(x)
    \sqrt{a_0(x)},\\
    g_k^{22}(x)=\sqrt{a_0(x)} ( \alpha_k^2(x)+ \beta_k^2(x)),
    \quad k=1,2, \ \ x \in \overline{B}_1.
\end{multline}
Since in the sequel we have to deal with some basic properties of
polynomials, we recall such properties for what concerns the
polynomial $\widetilde{p}(x; z)$ and we refer the reader to
\cite[Chapter 5]{Wa} for an extended treatment of the issue. For
any fixed $x \in \overline{B}_1$ we denote by $ {\mathcal{D}}(x)$
the absolute value of the discriminant of the polynomial
$\widetilde{p}(x;z)$, that is
\begin{equation}
    \label{1-60}
    {\mathcal{D}}(x)=a_0^6\left (
    (z_1-z_2)(z_1-\overline{z}_1)(z_1-\overline{z}_2)(z_2-\overline{z}_1)(z_2-\overline{z}_2)(\overline{z}_1-\overline{z}_2)
    \right )^2,
\end{equation}
where $a_0=a_0(x)$ and $z_k=z_k(x)=\alpha_k(x)+i\beta_k(x)$,
$k=1,2$. An elementary calculation yields
\begin{equation}
    \label{2-60}
    {\mathcal{D}}(x)=16 a_0^6 \beta_1^2 \beta_2^2
    \left [
    (\alpha_1-\alpha_2)^2+ (\beta_1+\beta_2)^2 \right ]^2 \left [
    (\alpha_1-\alpha_2)^2+ (\beta_1-\beta_2)^2
    \right ]^2.
\end{equation}
In terms of the coefficients $a_h=a_h(x)$, $h=0,1,...,4$, it is
also known that
\begin{equation}
    \label{3-60}
    {\mathcal{D}}(x)= \frac{1}{a_0} |\det S(x)|,
\end{equation}
where $S(x)$ is the $7\times 7$ matrix defined by \eqref {3. S(x)}.

Furthermore, let us denote by $\Psi$ the map of $\R^4$ into $\R^4$
defined by $\Psi(t_1,t_2,w_1,w_2)=
\{\Psi_k(t_1,t_2,w_1,w_2)\}_{k=1}^4$, where
\begin{center}
\( {\displaystyle \left\{
\begin{array}{lr}
    \Psi_1(t_1,t_2,w_1,w_2)=t_1+t_2,
         \vspace{0.12em}\\
    \Psi_1(t_1,t_2,w_1,w_2)= t_1^2+t_2^2+4t_1t_2+w_1+w_2,
        \vspace{0.12em}\\
    \Psi_1(t_1,t_2,w_1,w_2)= t_1(t_2^2 +w_2)+t_2(t_1^2+w_1),
        \vspace{0.12em}\\
    \Psi_1(t_1,t_2,w_1,w_2)=(t_1^2+w_1)(t_2^2+w_2).
        \vspace{0.25em}\\
\end{array}
\right. } \) \vskip -3.0em
\begin{eqnarray}
\ & & \label{5-61}
\end{eqnarray}
\end{center}
Notice that
\begin{equation}
    \label{6-61}
    a_1=-2a_0\Psi_1(\alpha_1, \alpha_2, \beta_1^2, \beta_2^2),
\end{equation}
\begin{equation}
    \label{7-61}
    a_2=a_0\Psi_2(\alpha_1, \alpha_2, \beta_1^2, \beta_2^2),
\end{equation}
\begin{equation}
    \label{8-61}
    a_3=-2a_0\Psi_3(\alpha_1, \alpha_2, \beta_1^2, \beta_2^2),
\end{equation}
\begin{equation}
    \label{9-61}
    a_4=a_0\Psi_4(\alpha_1, \alpha_2, \beta_1^2, \beta_2^2).
\end{equation}
Let us denote by $\frac{\partial \Psi (t_1, t_2, w_1,
w_2)}{\partial (t_1, t_2, w_1, w_2)}$ the jacobian matrix of
$\Psi$ and let $J(t_1, t_2, w_1, w_2)$ be its determinant. An
elementary calculation shows that
\begin{equation}
    \label{1-62}
    J(t_1, t_2, w_1, w_2)= - \left [
    (t_1-t_2)^4+2(w_1+w_2)(t_1-t_2)^2+(w_1-w_2)^2
    \right ].
\end{equation}
Let us denote
\begin{equation}
    \label{3-62}
    \gamma_1= \min \left \{ \gamma, \frac{1}{16M}, 1 \right \}.
\end{equation}

The following lemma holds.

\begin{lem}
   \label{lem:8-4.3}
    Let $p_k(x;\xi)$, $k=1,2$, be defined by \eqref{4-59}. The
    following facts hold:

\noindent (a) If \eqref{eq:sym-conditions-C-components} and \eqref{eq:3.bound_quantit} are satisfied, then

\begin{equation}
    \label{4-62}
    \gamma_2 |\xi|_2^2 \leq p_k(x;\xi) \leq
    \gamma_2^{-1}|\xi|_2^2, \quad \hbox{for every } x\in
    \overline{B}_1, \ \xi \in \R^2, \ k=1,2,
\end{equation}
where $\gamma_2= 5^{-6}\gamma_1^{15}$.

\noindent (b) If the dichotomy condition introduced in Definition \ref{def:dichotomy}
holds true in $B_1$, then
$g_k^{ij} \in C^{1,1}(\overline{B}_1)$, for $i,j,k=1,2$.

More precisely, if \eqref{3.D(x)bound} holds true, then
\begin{equation}
    \label{2-63}
    \sum_{i,j,k=1}^2
    \left (
    \| \nabla g_k^{ij}
    \|_{L^{\infty}(B_1)} \delta_1^{1/2} + \| \nabla^2 g_k^{ij}
    \|_{L^{\infty}(B_1)} \delta_1
    \right )
    \leq C_1,
\end{equation}
where $\delta_1= \min_{\overline{B}_1} {\mathcal{D}}(x)$ and $C_1$
only depends on $M$ and $\gamma$, whereas if \eqref{3.D(x)bound 2} holds true, then
\begin{equation}
    \label{3-63}
    \sum_{i,j,k=1}^2
    \left (
    \| \nabla g_k^{ij}
    \|_{L^{\infty}(B_1)}  + \| \nabla^2 g_k^{ij}
    \|_{L^{\infty}(B_1)}
    \right )
    \leq C_2,
\end{equation}
where $C_2$ only depends on $M$ and $\gamma$.
\end{lem}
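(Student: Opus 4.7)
\medskip
\noindent\textbf{Proof plan.} For part (a), the starting point is the explicit formula
\[
 p_k(x;\xi)=\sqrt{a_0(x)}\bigl((\xi_1-\alpha_k(x)\xi_2)^2+\beta_k(x)^2\xi_2^2\bigr),
\]
which follows by inserting \eqref{5-59} into \eqref{4-59} and completing the square. The coefficient matrix of $p_k$ has determinant $a_0\beta_k^2$ and trace $\sqrt{a_0}(1+\alpha_k^2+\beta_k^2)$, so its smallest eigenvalue is at least $\sqrt{a_0}\beta_k^2/(1+\alpha_k^2+\beta_k^2)$ and its largest is at most $\sqrt{a_0}(1+\alpha_k^2+\beta_k^2)$. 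Thus part (a) reduces to two-sided quantitative control of $\alpha_k^2+\beta_k^2$ and a strictly positive lower bound on $\beta_k$, all in terms of $\gamma$ and $M$.

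To get these bounds I would use Vieta applied to $\widetilde p(x;z)=a_0\prod_{k=1,2}((z-\alpha_k)^2+\beta_k^2)$ together with the two-sided inequalities
\[
 \gamma(t^2+1)^2\le\widetilde p(x;t)\le c_M(t^2+1)^2,\qquad t\in\mathbb R,
\]
coming from \eqref{2-59} and \eqref{eq:3.bound_quantit}. Expanding $\widetilde p(x;0)=a_4$ and the leading coefficient $a_0$, one reads off $(\alpha_1^2+\beta_1^2)(\alpha_2^2+\beta_2^2)=a_4/a_0\in[\gamma/c_M,c_M/\gamma]$. Combining this with the expressions \eqref{6-61}--\eqref{9-61} for $a_1/a_0,a_2/a_0,a_3/a_0$ (each bounded by $c_M/\gamma$ in absolute value), one obtains individual bounds on $|\alpha_k|$ and $\alpha_k^2+\beta_k^2$. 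The positive lower bound on $\beta_k$ is obtained by evaluating $\widetilde p(x;\alpha_k)=a_0\beta_k^2((\alpha_k-\alpha_j)^2+\beta_j^2)\ge\gamma(\alpha_k^2+1)^2$ and dividing by the already-bounded factors; tracking the worst case produces the explicit $\gamma_2=5^{-6}\gamma_1^{15}$.

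For part (b) the key observation is that the map $\Psi$ of \eqref{5-61} sends the root data $(\alpha_1,\alpha_2,\beta_1^2,\beta_2^2)$ (up to signs) to $(a_1/a_0,a_2/a_0,a_3/a_0,a_4/a_0)$, and its Jacobian \eqref{1-62} satisfies
\[
 J^2=\bigl[(\alpha_1-\alpha_2)^2+(\beta_1+\beta_2)^2\bigr]^2\bigl[(\alpha_1-\alpha_2)^2+(\beta_1-\beta_2)^2\bigr]^2,
\]
which, in view of \eqref{2-60} and \eqref{3-60}, is comparable to $\mathcal D(x)/(a_0^6\beta_1^2\beta_2^2)$. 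When \eqref{3.D(x)bound} holds, $|J|\gtrsim\delta_1^{1/2}$ on $\overline{B_1}$, and the inverse function theorem applied pointwise (in the $a_i/a_0$ variables, which are themselves in $C^{1,1}(\overline{B_1})$ with norm controlled by $M$) yields $\alpha_k,\beta_k^2\in C^{1,1}(\overline{B_1})$ with first and second derivatives scaling like $\delta_1^{-1/2}$ and $\delta_1^{-1}$, giving \eqref{2-63}. When \eqref{3.D(x)bound 2} holds identically, $\widetilde p$ has a double conjugate pair, i.e.\ $\alpha_1\equiv\alpha_2=:\alpha$, $\beta_1\equiv\beta_2=:\beta$, and comparing coefficients in $\widetilde p=a_0((z-\alpha)^2+\beta^2)^2$ yields directly $\alpha=-a_1/(4a_0)$ and $\beta^2=\tfrac12(a_2/a_0)-3\alpha^2$, rational expressions in the $a_i/a_0$ with denominators bounded below by a constant depending only on $\gamma$; this gives \eqref{3-63}. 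Finally, by \eqref{5-59}, regularity of $\alpha_k,\beta_k^2$ transfers to $g_k^{ij}$, modulo also handling $\sqrt{a_0}$, which is $C^{1,1}$ since $a_0\ge\gamma>0$.

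\medskip
\noindent The main obstacle is the first case of (b): one must verify that $J$ is really bounded below by a constant multiple of $\delta_1^{1/2}$, rather than something involving $a_0\beta_1\beta_2$ (which is already controlled from part (a)), and then propagate the resulting $\delta_1$-dependent Lipschitz and $C^{0,1}$ bounds through the implicit function theorem with careful tracking of constants. The algebraic identity relating $J^2$ and $\mathcal D/(a_0^6\beta_1^2\beta_2^2)$, together with the lower bounds on $a_0$ and $\beta_k$ from part (a), is exactly what is needed to close the loop.
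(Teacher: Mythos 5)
Your plan is correct and follows essentially the same route as the paper: the factorization $p=p_2p_1$ with Vieta's relations \eqref{6-61}--\eqref{9-61}, the evaluation identity $p(x;(\alpha_k,1))=a_0\beta_k^2[(\alpha_k-\alpha_j)^2+\beta_j^2]$ for the bounds on the roots, the comparison of the Jacobian of $\Psi$ with $\sqrt{\mathcal D}$ (the paper's \eqref{1-69}) followed by the inverse mapping theorem in the case \eqref{3.D(x)bound}, and the explicit formulas \eqref{2-70}--\eqref{3-70} in the case \eqref{3.D(x)bound 2}. The only minor divergence is the order of the arguments in part (a): the paper first proves $\beta_k>\epsilon_0$ by a case-splitting contradiction and then deduces the upper bounds on $|\alpha_k|,\beta_k$ from \eqref{5-64}, whereas you obtain the upper bounds directly (the needed step being $|\alpha_1\alpha_2|\le\sqrt{(\alpha_1^2+\beta_1^2)(\alpha_2^2+\beta_2^2)}\le\gamma_1^{-1}$ in \eqref{3-64}) and then read off the lower bound on $\beta_k$; both variants work.
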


\begin{proof}
First we prove (a). Let $x$, $x \in \overline{B}_1$, be fixed. In
the rest of the proof of (a) we shall omit, for brevity, the
dependence on $x$.

By \eqref{1-59}, \eqref{eq:3.bound_quantit}, \eqref{3-62}, we have
\begin{equation}
    \label{1-64}
    \gamma_1 |\xi|^4 \leq  p(\xi) \leq
    \gamma_1^{-1} |\xi|^4, \quad \hbox{for every } \xi \in \R^2.
\end{equation}
Now we observe that the following inequalities hold true
\begin{equation}
    \label{2-64}
    |\alpha_1+\alpha_2| \leq \gamma_1^{-2},
\end{equation}
\begin{equation}
    \label{3-64}
    |\alpha_1^2+\beta_1^2+\alpha_2^2+\beta_2^2+4\alpha_1\alpha_2 | \leq \gamma_1^{-2},
\end{equation}
\begin{equation}
    \label{4-64}
    |\alpha_1(\alpha_2^2+\beta_2^2)+\alpha_2(\alpha_1^2+\beta_1^2) | \leq \gamma_1^{-2},
\end{equation}
\begin{equation}
    \label{5-64}
    \gamma_1^{2}\leq (\alpha_1^2+\beta_1^2)(\alpha_2^2+\beta_2^2) \leq \gamma_1^{-2},
\end{equation}
\begin{equation}
    \label{6-64}
    \gamma_1^{2}(1+\alpha_1^2)^2\leq \beta_1^2\left [ (\alpha_1-\alpha_2)^2+\beta_2^2 \right ] \leq \gamma_1^{-2}(1+\alpha_1^2)^2,
\end{equation}
\begin{equation}
    \label{7-64}
    \gamma_1^{2}(1+\alpha_2^2)^2\leq \beta_2^2\left [ (\alpha_1-\alpha_2)^2+\beta_1^2 \right ] \leq
    \gamma_1^{-2}(1+\alpha_2^2)^2.
\end{equation}
Indeed, by \eqref{1-64} we have
\begin{equation}
    \label{1-65}
    \gamma_1 \leq a_0 \leq \gamma_1^{-1}, \quad \gamma_1 \leq a_4 \leq
    \gamma_1^{-1}.
\end{equation}
On the other hand, by \eqref{1-65} and using \eqref{6-61}, \eqref{7-61},
\eqref{8-61}, \eqref{9-61} we obtain the inequalities
\eqref{2-64}, \eqref{3-64}, \eqref{4-64}, \eqref{5-64},
respectively. Concerning \eqref{6-64}, by using \eqref{1-64} for
$\xi=(\alpha_1,1)$ and taking into account \eqref{3-59}, we have
\begin{equation}
    \label{2-65}
    \gamma_1(1+\alpha_1^2)^2\leq a_0 \beta_1^2\left [ (\alpha_1-\alpha_2)^2+\beta_2^2 \right ] \leq
    \gamma_1^{-1}(1+\alpha_1^2)^2.
\end{equation}
Inequality \eqref{6-64} follows {}from the first of \eqref{1-65}
and \eqref{2-65}. Proceeding similarly for $\xi=(\alpha_2,1)$ we
obtain \eqref{7-64}.

Now, denoting
\begin{equation}
    \label{3-65}
    \epsilon_0 = \frac{\gamma_1^3}{\sqrt{50}},
\end{equation}
we are going to prove that the following inequalities hold
\begin{equation}
    \label{4-65}
    \beta_k > \epsilon_0, \quad k=1,2,
\end{equation}
\begin{equation}
    \label{5-65}
    \beta_k \leq \frac{1}{\gamma_1 \epsilon_0}, \quad k=1,2,
\end{equation}
\begin{equation}
    \label{6-65}
    |\alpha_k| \leq \frac{1}{\gamma_1 \epsilon_0}, \quad k=1,2.
\end{equation}
In order to prove \eqref{4-65}, it is enough to consider the case
$k=1$, as the case $k=2$ can be proved by the same arguments. We
proceed by contradiction and we assume that
\begin{equation}
    \label{1-66}
    \beta_1^2 \leq \epsilon_0^2.
\end{equation}
By \eqref{1-66} and \eqref{6-64} we get
\begin{equation}
    \label{2-66}
    \frac{\gamma_1^{2}}{\epsilon_0^2} \leq
    (\alpha_1-\alpha_2)^2+\beta_2^2,
\end{equation}
hence at least one of the following inequalities must hold
\begin{equation}
    \label{3a-66}
    \frac{\gamma_1^{2}}{2\epsilon_0^2} \leq \beta_2^2,
\end{equation}
\begin{equation}
    \label{3b-66}
    \frac{\gamma_1^{2}}{2\epsilon_0^2} \leq (\alpha_1-\alpha_2)^2.
\end{equation}
If the inequality \eqref{3a-66} holds, then by \eqref{5-64} we
have
\begin{equation}
    \label{4-66}
    \alpha_1^2 \leq \alpha_1^2 +\beta_1^2 \leq
    \frac{\gamma_1^{-2}}{\alpha_2^2+\beta_2^2} \leq
    \frac{\gamma_1^{-2}}{\beta_2^2} \leq 2
    \gamma_1^{-4}\epsilon_0^2,
\end{equation}
hence
\begin{equation}
    \label{1-67}
    |\alpha_1| \leq
    \sqrt{2} \gamma_1^{-2} \epsilon_0,
\end{equation}
and in turn inequalities \eqref{1-67}, \eqref{2-64} imply
\begin{equation}
    \label{2-67}
    |\alpha_2| \leq
    (1+\sqrt{2}\epsilon_0) \gamma_1^{-2}.
\end{equation}
Therefore, by \eqref{3-64}, \eqref{3a-66}, \eqref{1-67},
\eqref{2-67}, and recalling that $\gamma_1 \in (0,1)$, we have
\begin{equation}
    \label{3-67}
    \frac{\gamma_1^2}{2\epsilon_0^2}
    \leq
    \beta_2^2 \leq \alpha_2^2 +\beta_2^2+\alpha_1^2+\beta_1^2 < 25
    \gamma_1^{-4},
\end{equation}
hence we have $\epsilon_0 > \frac{\gamma_1^3}{ \sqrt{50} }$, a
contradiction. Hence, \eqref{3a-66} cannot be true.

If \eqref{3b-66} holds, then we have $|\alpha_1|+|\alpha_2|\geq
|\alpha_1-\alpha_2|\geq \frac{\gamma_1}{\sqrt{2}\epsilon_0}$.
Therefore, at least one of the following inequalities holds
\begin{equation}
    \label{4-67}
    |\alpha_1| \geq \frac{\gamma_1}{2\sqrt{2}\epsilon_0}, \quad
    |\alpha_2| \geq \frac{\gamma_1}{2\sqrt{2}\epsilon_0}.
\end{equation}
If the first of \eqref{4-67} holds, then by \eqref{2-64} we have $|\alpha_2|
\geq |\alpha_1| - \gamma_1^{-2} \geq
\frac{\gamma_1}{2\sqrt{2}\epsilon_0} -\gamma_1^{-2}\geq
\frac{\gamma_1}{4\sqrt{2}\epsilon_0}$ and, analogously, if the second of \eqref{4-67} holds, then we
have $|\alpha_1| \geq \frac{\gamma_1}{4\sqrt{2}\epsilon_0}$.
Hence, if \eqref{3b-66} holds, then we have
\begin{equation}
    \label{1-68}
    |\alpha_1| \geq \frac{\gamma_1}{4\sqrt{2}\epsilon_0}, \quad
    |\alpha_2| \geq \frac{\gamma_1}{4\sqrt{2}\epsilon_0}.
\end{equation}
Inequalities \eqref{1-68} and \eqref{5-64} give
\begin{equation}
    \label{2-68}
    \frac{\gamma_1^2}{32\epsilon_0^2} \leq \alpha_1^2 \leq
    \alpha_1^2 + \beta_1^2 \leq \frac{\gamma_1^{-2}}{\alpha_2^2 +
    \beta_2^2} \leq \frac{\gamma_1^{-2}}{\alpha_2^2} \leq 32
    \gamma_1^{-4}\epsilon_0^2.
\end{equation}
As a consequence of the above inequality we have
$\frac{\gamma_1^3}{32}\leq \epsilon_0^2$, that contradicts
\eqref{3-65}. Therefore, \eqref{1-66} cannot be true and
\eqref{4-65} is proved.

By \eqref{5-64} and \eqref{4-65} we easily obtain \eqref{5-65} and
\eqref{6-65}. Finally, by \eqref{4-65}--\eqref{6-65}, we obtain easily an estimate {}from above and {}from
below of the eigenvalues of the matrices
$\{g_k^{ij}(x)\}_{i,j=1}^2$ {}from which the estimate \eqref{4-62}
follows.

Now we prove the statement (b) of the lemma. By \eqref{1-62},
\eqref{1-65}, \eqref{4-65}--\eqref{6-65} we have
\begin{equation}
    \label{1-69}
    \gamma_3 \sqrt{\mathcal{D}(x)} \leq J(x) \leq \gamma_3^{-1}
    \sqrt{\mathcal{D}(x)}, \quad \hbox{for every } x \in
    \overline{B}_1,
\end{equation}
where
\begin{equation}
    \label{2-69}
    J(x)= |J(\alpha_1(x), \alpha_2(x), \beta_1^2(x),
    \beta_2^2(x))|
\end{equation}
and $\gamma_3= 10^{-6}\gamma_1^{25}\gamma_0^{-3}$.

Assume that \eqref{3.D(x)bound} holds in $B_1$.
In order to prove that $g_{k}^{ij} \in C^{1,1}(\overline{B}_1)$
and to derive estimate \eqref{2-63}, it is enough to apply the
Inverse Mapping Theorem to the map $\Psi$. Indeed, by
\eqref{5-61}, the vector-valued function $\omega(x)=(\alpha_1(x),
\alpha_2(x), \beta_1^2(x), \beta_2^2(x))$ satisfies the following
equality
\begin{equation}
    \label{1-70}
    \Psi(\omega(x))=d(x), \quad x \in
    \overline{B}_1,
\end{equation}
where $d(x)=\left ( - \frac{a_1(x)}{2a_0(x)},
\frac{a_2(x)}{a_0(x)}, -\frac{a_3(x)}{2a_0(x)},
\frac{a_4(x)}{a_0(x)} \right )$, hence by \eqref{eq:3.convex},
\eqref{3.coeff6}, \eqref{3.coeffsmall}, \eqref{1-69}, \eqref{2-69},
\eqref{1-70} we obtain \eqref{2-63}.

If \eqref{3.D(x)bound 2} holds true, then by \eqref{2-60} we have
$\alpha_1(x)=\alpha_2(x)$ and $\beta_1(x)=\beta_2(x)$ for every $x
\in \overline{B}_1$. Therefore, by \eqref{5-61}--\eqref{7-61} we have
\begin{equation}
    \label{2-70}
    \alpha_1(x)=\alpha_2(x)= - \frac{a_1(x)}{4a_0(x)}
\end{equation}
and
\begin{equation}
    \label{3-70}
    \beta_1^2(x)=\beta_2^2(x)= \frac{a_2(x)}{2a_0(x)}-
    \frac{3a_1^2(x)}{16a_0^2(x)}.
\end{equation}
By \eqref{eq:3.convex}, \eqref{3.coeff6}, \eqref{3.coeffsmall}, \eqref{1-65},
\eqref{4-65}, \eqref{2-70} and \eqref{3-70} we get \eqref{3-63}.
\end{proof}
\begin{theo}  [Three sphere inequality - first version]
   \label{theo:9-4.3}
   Let us assume that $u \in H^4({B}_R)$ is a solution to
   the equation
\begin{equation}
    \label{1-71}
    \partial_{ij}^2 (C_{ijkl}(x)\partial_{kl}^2 u)=0,
    \quad \hbox{in } B_R,
\end{equation}
where $\{C_{ijkl}(x)\}_{i,j,k,l=1}^{2}$ is a fourth order tensor
whose entries belong to $C^{1,1}(\overline{B}_R)$. Assume that
\eqref{eq:sym-conditions-C-components},
\eqref{eq:3.bound_quantit}, \eqref{eq:3.convex} and the dichotomy condition are
satisfied in $B_R$.
Let $\gamma_2=5^{-6}\gamma_1^{15}$ and $\beta=
\frac{1}{\gamma_2^2}-1$. There exist positive constants $s_2$,
$0<s_2<1$, and $C$, $C>1$, $s_2$ and $C$ only depending on
$\gamma$, $M$ and on $\delta_1=
\min_{\overline{B}_R} \mathcal{D}$, such that, for every $\rho_1 \in
(0,s_2 R)$ and every $r$, $\rho$ satisfying
$r<\rho<\frac{\rho_1\gamma_2^2}{2}$, the following inequality holds
\begin{multline}
    \label{1-72}
    \sum_{k=0}^3 \rho^{2k} \int_{B_\rho} |\nabla^k u|^2 \leq
    C \exp \left (
    C \left((\gamma_2^{-1}\rho)^{-\beta}-(\gamma_2
    \frac{\rho_1}{2})^{-\beta}\right)R^{\beta}
    \right ) \cdot \\
    \cdot \left (
    \sum_{k=0}^3 r^{2k} \int_{B_r} |\nabla^k u|^2
    \right )^{\theta_1}
    \left (
    \sum_{k=0}^3 \rho_1^{2k} \int_{B_{\rho_1}} |\nabla^k u|^2
    \right )^{1-\theta_1},
\end{multline}
where
\begin{equation}
    \label{2-72}
    \theta_1 = \frac{(\gamma_2^{-1}\rho)^{-\beta}-(\gamma_2
    \frac{\rho_1}{2})^{-\beta}}{(\gamma_2 \frac{r}{2}  )^{-\beta}-(\gamma_2
    \frac{\rho_1}{2})^{-\beta}}.
\end{equation}
\end{theo}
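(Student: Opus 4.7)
The plan is to reduce the fourth-order plate equation to the product-structure setting of Section \ref{Sec4.2} and then invoke Theorem \ref{theo:7-4.2}. The reduction is supplied by Lemma \ref{lem:8-4.3}. Specifically, I first write $\mathcal{L}u=p(x;\partial)u+Qu$ as in \eqref{3-58}, so that the principal symbol $p(x;\xi)$ factorizes through \eqref{3-59}--\eqref{5-59} into the product $p_1(x;\xi)p_2(x;\xi)$ of two real quadratic forms $p_k(x;\xi)=g_k^{ij}(x)\xi_i\xi_j$. Then $p(x;\partial)u=L_2L_1u+R u$ where $L_k u=g_k^{ij}(x)\partial_{ij}^2u$ and $R$ is a third-order operator whose coefficients involve first derivatives of the $g_k^{ij}$. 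Defining $\widetilde{Q}=Q+R$, I obtain the decomposition $\mathcal{L}u=L_2L_1u+\widetilde{Q}u$.

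Lemma \ref{lem:8-4.3}(a) yields the uniform ellipticity bound
\begin{equation*}
\gamma_2|\xi|^2\leq g_k^{ij}(x)\xi_i\xi_j\leq \gamma_2^{-1}|\xi|^2,\qquad x\in\overline{B}_R,\ \xi\in\R^2,\ k=1,2,
\end{equation*}
with $\gamma_2=5^{-6}\gamma_1^{15}$, while part (b), thanks to the dichotomy condition, yields $g_k^{ij}\in C^{1,1}(\overline{B}_R)$ with Lipschitz norms and second-derivative norms bounded by a constant depending on $\gamma$, $M$ and, in case \eqref{3.D(x)bound}, on $\delta_1$. Together with \eqref{4-58}, these bounds control the coefficients of $\widetilde{Q}$. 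Since $u$ solves $\mathcal{L}u=0$, the identity $L_2L_1u=-\widetilde{Q}u$ gives the pointwise inequality
\begin{equation*}
|L_2L_1u|\leq N\sum_{k=0}^{3}R^{-4+k}|\nabla^k u|,\qquad \hbox{in } B_R,
\end{equation*}
with $N$ depending on $\gamma$, $M$ and $\delta_1$; this is exactly the hypothesis \eqref{1-39} of Theorem \ref{theo:7-4.2} applied to the operator $L_2L_1$.

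Next I verify the exponent condition. The quantities $\nu_*,\nu^*,\mu_*,\mu^*$ in Theorem \ref{theo:7-4.2} are the extreme eigenvalues of $g_1^{-1}(0)$ and $g_2^{-1}(0)$, hence by the ellipticity estimate above they all lie in $[\gamma_2,\gamma_2^{-1}]$. Therefore
\begin{equation*}
\sqrt{\frac{\mu^*\nu^*}{\mu_*\nu_*}}-1\leq \gamma_2^{-2}-1=\beta,
\end{equation*}
so the choice $\beta=\gamma_2^{-2}-1$ (in the worst case replaced by $\beta+\varepsilon$ for arbitrarily small $\varepsilon>0$ to ensure strict inequality, with constants adjusted accordingly) meets the assumption of Theorem \ref{theo:7-4.2}.

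Finally, I apply Theorem \ref{theo:7-4.2} with $\lambda=\gamma_2$, with $\Lambda,\Lambda_1$ furnished by \eqref{2-63}--\eqref{3-63}, and with $N$ as above. The estimate \eqref{2-39}--\eqref{3-39} specializes to \eqref{1-72}--\eqref{2-72} upon substituting $\lambda=\gamma_2$, upon absorbing the $\max\{1,(\rho/R)^{-(5\beta-2)}\}$ factor into the exponential prefactor, and upon relabelling the constants $s_1,C$ as $s_2,C$ now depending only on $\gamma$, $M$ and $\delta_1$. The main delicate point is the bookkeeping of the dependence of all constants on $\delta_1$: it enters only through the $C^{1,1}$ bounds \eqref{2-63} on $g_k^{ij}$ under the regime \eqref{3.D(x)bound}, while under \eqref{3.D(x)bound 2} one uses \eqref{3-63} and $\delta_1$ plays no role, in accordance with Remark \ref{rem:dichotomy}. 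Apart from this tracking of constants and the verification that the threshold $\beta=\gamma_2^{-2}-1$ is admissible, every other step is a direct application of results already established.
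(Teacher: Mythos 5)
Your proposal is correct and follows essentially the same route as the paper: decompose $\mathcal{L}u=L_2L_1u+\widetilde{Q}u$ via Lemma \ref{lem:8-4.3}, use part (a) for the ellipticity constant $\gamma_2$ and part (b) (under the dichotomy condition) for the $C^{1,1}$ bounds on $g_k^{ij}$, and then apply Theorem \ref{theo:7-4.2} with $\lambda=\gamma_2$. Your explicit check that the eigenvalues of $g_k^{-1}(0)$ force $\sqrt{\mu^*\nu^*/(\mu_*\nu_*)}-1\leq\gamma_2^{-2}-1$, and your remark on the borderline case of equality, are details the paper leaves implicit.
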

\begin{proof}
Let us define
\begin{equation}
    \label{10-75}
    \widetilde{u}(y)=u(Ry), \quad \widetilde{C}_{ijkl}(y)=C_{ijkl}(Ry), \ \ y \in
    \overline{B}_1, \ \ i,j,k,l=1,2.
\end{equation}
Then, $\widetilde{u} \in H^4(B_1)$ is a solution to the equation
\begin{equation}
    \label{20-75}
    \partial_{ij}^2 (\widetilde{C}_{ijkl}(y)\partial_{kl}^2 \widetilde{u})=0,
    \quad \hbox{in } B_1.
\end{equation}
Now, by Lemma \ref{lem:8-4.3} we have that
\begin{equation}
    \label{3-72}
    {\mathcal{L}}=L_2L_1 \widetilde{u} + Q \widetilde{u},
\end{equation}
where $L_k=p_k(y; \partial)$, $k=1,2$, and
\begin{equation}
    \label{4-72}
    p_k(y; \partial) = g_k^{ij}\partial_{ij}^2, \quad k=1,2.
\end{equation}
Here, $\{g_k^{ij}\}_{i,j=1}^2$, $k=1,2$, satisfy \eqref{2-63} or
\eqref{3-63} (the former whenever \eqref{3.D(x)bound} holds, the
latter whenever \eqref{3.D(x)bound 2} holds),
\begin{equation}
    \label{5-73}
    \gamma_2 |\xi|^2 \leq g_k^{ij}(y)\xi_i\xi_j \leq
    \gamma_2^{-1}|\xi|^2, \quad x \in \overline{B}_1, \ \xi \in
    \R^2,
\end{equation}
and $Q$ is a third order operator with bounded coefficients
satisfying
\begin{equation}
    \label{6-73}
    |Q\widetilde{u}| \leq cM \left (
    |\nabla^3\widetilde{u}| + |\nabla^2\widetilde{u}|
    \right ),
\end{equation}
where $c$ is an absolute constant. Therefore, {}from \eqref{3-72}--\eqref{6-73} and Theorem
\ref{theo:7-4.2}, and coming back to the old variables, we obtain
the three sphere inequality \eqref{1-72}.
\end{proof}
The following Poincar\'{e}-type inequality holds.
\begin{prop} [Poincar\'{e} inequality]
    \label{prop:10-4.3}
There exists a positive constant $C$ only depending on $n$ such
that for every $u\in H^2(B_R, \R^n)$ and for every $r\in(0,R]$
\begin{equation}
    \label{1-76}
    \int_{B_R}|\tilde u_r|^2+R^2\int_{B_R}|\nabla \tilde u_r|^2\leq
   CR^4\left(\frac{R}{r}\right)^n\int_{B_R}|\nabla^2 u|^2,
\end{equation}
where
\begin{equation}
    \label{2-76}
  \widetilde{u}_r(x)=u(x) -(u)_r-(\nabla u)_r \cdot x,
\end{equation}
\begin{equation}
    \label{3-76}
    (u)_r=\frac{1}{|B_r|}\int_{B_r}u, \qquad (\nabla
    u)_r=\frac{1}{|B_r|}\int_{B_r}\nabla u.
  \end{equation}
\end{prop}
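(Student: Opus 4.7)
\textbf{Proof proposal for Proposition \ref{prop:10-4.3}.} The plan is a standard two–scale Poincaré argument: subtract off an affine polynomial so that the remainder has zero mean and zero mean gradient on the \emph{large} ball $B_R$ (where standard Poincaré–Wirtinger applies with constant $\sim R$), then exploit the vanishing conditions on the \emph{small} ball $B_r$ (which are built into $\tilde u_r$) to estimate the polynomial correction, paying only one factor $(R/r)^n$.

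Since the inequality is componentwise, I reduce to the scalar case. First I observe that the definition of $\tilde u_r$ gives, because $B_r$ is centered at the origin and hence $(x)_r = 0$,
\begin{equation*}
(\tilde u_r)_r = 0, \qquad (\nabla \tilde u_r)_r = 0.
\end{equation*}
Then I introduce the affine polynomial
\begin{equation*}
P(x) := (\tilde u_r)_R + (\nabla \tilde u_r)_R \cdot x, \qquad h := \tilde u_r - P,
\end{equation*}
so that $(h)_R = 0$ and $(\nabla h)_R = 0$. Two applications of the usual Poincaré–Wirtinger inequality on $B_R$, together with $\nabla^2 h = \nabla^2 u$, yield
\begin{equation*}
\int_{B_R}|h|^2 \leq C R^{2}\int_{B_R}|\nabla h|^2 \leq C R^{4}\int_{B_R}|\nabla^2 u|^2,
\end{equation*}
with $C$ depending only on $n$.

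Next I control $P$. Writing $P(x) = a + b\cdot x$ with $a=(\tilde u_r)_R$, $b=(\nabla \tilde u_r)_R$, I use that $B_r$ is centered at the origin to get $(P)_r = a$ and $(\nabla P)_r = b$. Combining with $(\tilde u_r)_r = 0$ and $(\nabla \tilde u_r)_r = 0$ gives
\begin{equation*}
a = -(h)_r, \qquad b = -(\nabla h)_r.
\end{equation*}
By Cauchy–Schwarz and then extending the domain of integration from $B_r$ to $B_R$,
\begin{equation*}
a^{2} \leq \frac{1}{|B_r|}\int_{B_R}|h|^{2}, \qquad |b|^{2} \leq \frac{1}{|B_r|}\int_{B_R}|\nabla h|^{2}.
\end{equation*}
Multiplying by $|B_R|$ and using the Poincaré bounds above produces
\begin{equation*}
a^{2}|B_R| + R^{2}|b|^{2}|B_R| \leq C\left(\frac{R}{r}\right)^{\!n} R^{4}\int_{B_R}|\nabla^2 u|^{2}.
\end{equation*}

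Finally I assemble. Using $|P|^{2} \leq 2a^{2} + 2|b|^{2}|x|^{2}$ and $|\nabla P|^{2} = |b|^{2}$,
\begin{equation*}
\int_{B_R}|P|^{2} \leq 2 a^{2}|B_R| + 2 R^{2}|b|^{2}|B_R|, \qquad \int_{B_R}|\nabla P|^{2} = |b|^{2}|B_R|,
\end{equation*}
and the triangle inequality $\tilde u_r = h + P$ gives
\begin{equation*}
\int_{B_R}|\tilde u_r|^2 + R^{2}\int_{B_R}|\nabla \tilde u_r|^2 \leq 2\!\int_{B_R}\!(|h|^2 + R^2|\nabla h|^2) + 2\!\int_{B_R}\!(|P|^2 + R^2|\nabla P|^2) \leq C R^{4}\!\left(\frac{R}{r}\right)^{\!n}\!\int_{B_R}|\nabla^2 u|^{2},
\end{equation*}
which is the desired estimate. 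The only mildly delicate point is making sure the single factor $(R/r)^{n}$ survives: that is why I set up $h$ to kill two averages on $B_R$ (so the Poincaré steps cost only powers of $R$), relegating the ratio $R/r$ to the estimation of the two scalar coefficients $a$ and $b$, each of which picks up just $|B_R|/|B_r| = (R/r)^n$.
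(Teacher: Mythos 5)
Your argument is correct; I checked each step. The vanishing of the averages $(\tilde u_r)_r$, $(\nabla \tilde u_r)_r$, $(h)_R$, $(\nabla h)_R$ all use only that the balls are centered at the origin, the two Poincar\'e--Wirtinger applications on $B_R$ are legitimate because $h$ and each component of $\nabla h$ have zero mean there, the identities $a=-(h)_r$, $b=-(\nabla h)_r$ follow from $(\tilde u_r)_r=(\nabla\tilde u_r)_r=0$, and the Cauchy--Schwarz step correctly produces the single volume ratio $|B_R|/|B_r|=(R/r)^n$. Note, however, that the paper does not prove this proposition at all: it simply refers to Example 4.3 of the cited work of Alessandrini, Morassi and Rosset on linear constraints in Poincar\'e and Korn type inequalities, which treats such estimates in a general framework (arbitrary linear constraints selecting the affine correction, with the dependence on the constraint quantified abstractly). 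Your proof is therefore a genuinely different, self-contained and elementary route: the two-scale decomposition $\tilde u_r=h+P$, with $h$ normalized on the large ball and the affine coefficients $a,b$ recovered from the small-ball constraints, makes the origin of the factor $(R/r)^n$ completely transparent, at the cost of being tailored to this specific pair of constraints rather than covering the general class handled by the reference.
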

\begin{proof}
For a proof we refer to \cite[Example 4.3]{A-M-R4}.
\end{proof}
\begin{prop} [Caccioppoli-type inequality]
    \label{prop:11-4.3}
Let us assume that $u \in H^4(B_R)$ is a solution to the equation
\begin{equation}
    \label{4-76}
    \partial_{ij}^2 (C_{ijkl}(x)\partial_{kl}^2 u)=0,
    \quad \hbox{in } B_R,
\end{equation}
where $\{C_{ijkl}(x)\}_{i,j,k,l=1}^{2}$ is a fourth order tensor
whose entries belong to $C^{1,1}(\overline{B}_R)$. Assume that
\eqref{eq:sym-conditions-C-components}--\eqref{eq:3.convex} are satisfied. We have
\begin{equation}
    \label{5-76}
    \int_{B_{\frac{t}{2}}} |\nabla^3 u|^2 \leq C \int_{B_t}
    \sum_{k=0}^2 \left ( t^{k-3} |\nabla^k u| \right )^2, \quad
    \hbox{for every } t \leq R,
\end{equation}
where $C$ is a positive constant only depending on $\gamma$ and
$M$.
\end{prop}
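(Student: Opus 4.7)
The strategy is to differentiate the equation once, run a Caccioppoli-type energy estimate on the (now inhomogeneous) differentiated equation, and combine with a standard Caccioppoli for $u$ itself to recover the $t^{-6}u^2$ contribution.

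\medskip
\noindent\textbf{Step 1 (differentiating).} For $l=1,2$ set $v_l=\partial_l u$. Differentiating \eqref{4-76} in $x_l$ gives, in the weak sense,
$$\partial_{ij}^2(C_{ijkm}\partial_{km}^2 v_l)=\divrg\divrg G_l,\qquad (G_l)_{ij}=-(\partial_l C_{ijkm})\partial_{km}^2 u,$$
so that $G_l\in L^2(B_t;\mathbb{M}^2)$ with $|G_l|\le C\rho_0^{-1}M\,|\nabla^2 u|$ by \eqref{eq:3.bound_quantit}. Since $u\in H^4(B_R)$, $v_l\in H^3(B_R)$ and the formal computation is justified.

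\medskip
\noindent\textbf{Step 2 (Caccioppoli for $v_l$).} Pick a cutoff $\eta\in C^\infty_c(B_t)$ equal to $1$ on $B_{t/2}$ with $|\nabla^j\eta|\le C t^{-j}$, $j=0,1,2$, and insert the admissible test function $\varphi=\eta^4 v_l\in H^2_0(B_t)$ in the weak formulation
$$\int_{B_t}C_{ijkm}\,\partial_{km}^2 v_l\,\partial_{ij}^2\varphi=\int_{B_t}(G_l)_{ij}\,\partial_{ij}^2\varphi.$$
Expanding $\partial_i\partial_j(\eta^4 v_l)=\eta^4\partial_i\partial_j v_l+\partial_i(\eta^4)\partial_j v_l+\partial_j(\eta^4)\partial_i v_l+v_l\,\partial_i\partial_j(\eta^4)$, the strong convexity \eqref{eq:3.convex} bounds the main term from below by $\gamma\int\eta^4|\nabla^2 v_l|^2$. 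The remaining cross terms and the inhomogeneous term $\int G_l\cdot\nabla^2(\eta^4 v_l)$ are estimated by Cauchy--Schwarz and Young's inequality with a small parameter $\varepsilon$; the crucial pointwise bounds $|\nabla(\eta^4)|^2/\eta^4\le C t^{-2}$ and $|\nabla^2(\eta^4)|^2/\eta^4\le C t^{-4}$ (which is precisely why the exponent $4$ is taken) allow the weights to distribute. Absorbing the $\varepsilon\int\eta^4|\nabla^2 v_l|^2$ pieces into the left-hand side and summing over $l$ yields
$$\int_{B_{t/2}}|\nabla^3 u|^2\le C\int_{B_t}\left(\frac{|\nabla u|^2}{t^4}+\frac{|\nabla^2 u|^2}{t^2}+\frac{M^2}{\rho_0^2}|\nabla^2 u|^2\right).$$
In the normalization of Section~\ref{Sec4.3} we have $t\le R\le\rho_0$, so the last term is absorbed into $Ct^{-2}|\nabla^2 u|^2$ at the cost of a constant depending only on $\gamma$ and $M$.

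\medskip
\noindent\textbf{Step 3 (recovering $t^{-6}u^2$).} The same Caccioppoli machinery applied directly to \eqref{4-76} with test function $\widetilde\eta^4 u$, where $\widetilde\eta\in C^\infty_c(B_t)$ equals $1$ on $B_{3t/4}$ and $|\nabla^j\widetilde\eta|\le Ct^{-j}$, gives
$$\int_{B_{3t/4}}|\nabla^2 u|^2\le C\int_{B_t}\left(\frac{u^2}{t^4}+\frac{|\nabla u|^2}{t^2}\right).$$
Running Step~2 on the pair of balls $B_{t/2}\subset B_{3t/4}$ (rather than $B_{t/2}\subset B_t$) and inserting the above bound to dominate $t^{-2}|\nabla^2 u|^2$ on $B_{3t/4}$ produces the inequality \eqref{5-76}, with $C$ depending only on $\gamma$ and $M$.

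\medskip
\noindent\textbf{Main obstacle.} The technical heart is the bookkeeping in Step~2: one must verify that every cross term coming from $\nabla^2(\eta^4 v_l)$, together with the inhomogeneous integral $\int G_l\cdot\nabla^2(\eta^4 v_l)$, splits under Young's inequality in a way that makes the leading term $\int\eta^4|\nabla^2 v_l|^2$ absorbable. The exponent $4$ in the cutoff is chosen exactly so that all weighted derivatives of $\eta^4$ are controlled by powers of $1/t$ times $\eta^2$, which is what makes the absorption succeed; the dependence on $M$ enters only through the $L^\infty$ bound of $\nabla C$ furnished by \eqref{eq:3.bound_quantit}.
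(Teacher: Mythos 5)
Your proof is correct and follows essentially the same route as the paper, which multiplies the equation by $\Delta(\eta^6 u)$ and integrates by parts once — operationally the same as differentiating the equation and testing with a cutoff times $\nabla u$, as you do, with only the cutoff exponent differing. Note that your Step 3 is superfluous: the right-hand side of \eqref{5-76} already contains the term $\left(t^{-1}|\nabla^2 u|\right)^2$, so the estimate obtained at the end of Step 2 is already (slightly stronger than) the claimed inequality and there is no need to further reduce $|\nabla^2 u|^2$ to $u^2$ and $|\nabla u|^2$.
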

\begin{proof}
The proof of \eqref{5-76} is essentially the same of the proof of
\cite[Proposition $6.2$]{M-R-V1}. Here, for the reader
convenience, we give a sketch of the proof.

For every $t \in (0,R]$, let $\eta \in C_0^\infty(B_t)$ be such
that $0 \leq \eta \leq1$ in $B_t$, $\eta \equiv 1$ in
$B_{\frac{t}{2}}$ and
\begin{equation}
    \label{1-77}
    \sum_{k=1}^3 t^k |\nabla^k \eta| \leq C, \quad
    \hbox{in } B_t,
\end{equation}
where $C$ is an absolute constant. Multiplying equation
\eqref{4-76} by $\Delta (\eta^6 u)$ and integrating over $B_t$, we
have
\begin{equation}
    \label{2-77}
    \int_{B_t} C_{ijkl} \partial_{kl}^2 u \partial_{ij}^2 \Delta (\eta^6
    u)=0
\end{equation}
and, integrating by parts,
\begin{equation}
    \label{3-77}
    \int_{B_t} \left \{
    C_{ijkl} \partial_{kl}^2 \partial_s u \partial_{ij}^2 \partial_s (\eta^6 u)
    +
    \partial_s(C_{ijkl})\partial_{kl}^2u \partial_{ij}^2 \partial_s (\eta^6 u)
    \right \}=0.
\end{equation}
By \eqref{eq:3.convex}, \eqref{1-77}, \eqref{3-77} and taking into
account that $t\leq R$ we have
\begin{equation}
    \label{1-78}
    \int_{B_t}
    \eta^6
    C_{ijkl} \partial_{kl}^2 \partial_s u \partial_{ij}^2 \partial_s
    u = F[u],
\end{equation}
where $F$ satisfies the inequality
\begin{equation}
    \label{2-78}
    |F[u]| \leq
    CM \int_{B_t} \left (
    \sum_{k=0}^2
    t^{k-3} |\nabla^k u|
    \right )^2 +
    CM\int_{B_t} |\nabla^3 u| \eta^3  \left (
    \sum_{k=0}^2
    t^{k-3} |\nabla^k u|
    \right ),
\end{equation}
where $C$ is an absolute constant. By \eqref{1-78}, \eqref{2-78},
\eqref{eq:3.bound_quantit} and Cauchy inequality ($2ab \leq \epsilon a^2 +
\frac{1}{\epsilon}b^2$, for $\epsilon >0$) we have
\begin{equation}
    \label{3-78}
    \gamma \int_{B_t} \eta^6 |\nabla^3 u|^2 \leq CM^2 \int_{B_t}
    \left (
    \sum_{k=0}^2
    t^{k-3} |\nabla^k u|
    \right )^2.
\end{equation}
Inequality \eqref{5-76} follows immediately by \eqref{3-78}.
\end{proof}
\begin{theo} [Three sphere inequality - second version]
   \label{theo:12-4.3}
   Let $u \in H^4({B}_R)$ be a solution to
   the equation
\begin{equation}
    \label{1-79}
    \partial_{ij}^2 (C_{ijkl}(x)\partial_{kl}^2 u)=0,
    \quad \hbox{in } B_R,
\end{equation}
where $\{C_{ijkl}(x)\}_{i,j,k,l=1}^{2}$ is a fourth order tensor
whose entries belong to $C^{1,1}(\overline{B}_R)$. Assume that
\eqref{eq:sym-conditions-C-components},
\eqref{eq:3.bound_quantit}, \eqref{eq:3.convex} and the dichotomy condition are
satisfied in $B_R$. Let $\gamma_2=5^{-6}\gamma_1^{15}$ and $\beta=
\frac{1}{\gamma_2^2}-1$. There exist positive constants $s$,
$0<s<1$, and $C$, $C\geq 1$, $s$ and $C$ only depending on
$\gamma$, $M$ and on $\delta_1=
\min_{\overline{B}_R} \mathcal{D}$, such that, for every $\rho_1 \in
(0,s R)$ and every $r$, $\rho$ satisfying
$r<\rho<\frac{\rho_1\gamma_2^2}{2}$, the following inequality holds
\begin{multline}
    \label{1-80}
    \rho^4 \int_{B_\rho} |\nabla^2 u|^2 \leq
    C \exp \left(
    C \left((\gamma_2^{-1}\rho)^{-\beta}-(\gamma_2
    \frac{\rho_1}{2})^{-\beta}\right)R^{\beta}
    \right) \cdot \\
    \cdot \left (
    r^{4} \int_{B_{2r}} |\nabla^2 u|^2
    \right )^{\theta_1}
    \left (
    \frac{\rho_1^6}{r^2} \int_{B_{2\rho_1}} |\nabla^2 u|^2
    \right )^{1-\theta_1},
\end{multline}
where
\begin{equation}
    \label{2-80}
    \theta_1 = \frac{(\gamma_2^{-1}\rho)^{-\beta}-(\gamma_2
    \frac{\rho_1}{2})^{-\beta}}{(\gamma_2 \frac{r}{2}  )^{-\beta}-(\gamma_2
    \frac{\rho_1}{2})^{-\beta}}.
\end{equation}
\end{theo}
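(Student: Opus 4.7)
The plan is to reduce Theorem \ref{theo:12-4.3} to Theorem \ref{theo:9-4.3} by the standard trick of subtracting an affine correction from $u$. Define
\begin{equation*}
\widetilde{u}_r(x) = u(x) - (u)_r - (\nabla u)_r \cdot x,
\end{equation*}
with averages as in \eqref{3-76}. Since $\nabla^2(\text{affine}) \equiv 0$, the function $\widetilde{u}_r$ satisfies the same equation \eqref{1-79} as $u$, and moreover $\nabla^2 \widetilde{u}_r = \nabla^2 u$ and $\nabla^3 \widetilde{u}_r = \nabla^3 u$. Applying Theorem \ref{theo:9-4.3} to $\widetilde{u}_r$ (which is legitimate provided $2\rho_1 \leq R$, which we ensure by choosing $s$ smaller than $s_2$, for instance $s = \min\{s_2/2, \gamma_2^2/2\}$), and keeping only the $k=2$ term on the left, we obtain
\begin{equation*}
\rho^4 \int_{B_\rho}|\nabla^2 u|^2 \leq \sum_{k=0}^{3} \rho^{2k} \int_{B_\rho}|\nabla^k \widetilde{u}_r|^2 \leq C e^{C(\cdots)R^\beta} \, A^{\theta_1} B^{1-\theta_1},
\end{equation*}
with the same exponential factor and the same exponent $\theta_1$ as in \eqref{2-72}, where $A = \sum_{k=0}^3 r^{2k}\int_{B_r}|\nabla^k \widetilde{u}_r|^2$ and $B = \sum_{k=0}^3 \rho_1^{2k}\int_{B_{\rho_1}}|\nabla^k \widetilde{u}_r|^2$.

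The remaining work is to bound $A$ and $B$ in terms of $\int |\nabla^2 u|^2$ only. For $A$: the Poincar\'e-type inequality in Proposition \ref{prop:10-4.3}, applied on $B_{2r}$ with averages over $B_r$ (so $R/r=2$), gives $\int_{B_{2r}}|\widetilde{u}_r|^2 + r^2 \int_{B_{2r}}|\nabla \widetilde{u}_r|^2 \leq C r^4 \int_{B_{2r}}|\nabla^2 u|^2$. The Caccioppoli estimate \eqref{5-76} of Proposition \ref{prop:11-4.3}, applied to $\widetilde{u}_r$ with $t=2r$, then yields
\begin{equation*}
r^6 \int_{B_r}|\nabla^3 u|^2 \leq C \int_{B_{2r}} \bigl( |\widetilde{u}_r|^2 + r^2|\nabla \widetilde{u}_r|^2 + r^4|\nabla^2 u|^2 \bigr) \leq C r^4 \int_{B_{2r}}|\nabla^2 u|^2,
\end{equation*}
and combining these yields $A \leq C r^4 \int_{B_{2r}}|\nabla^2 u|^2$.

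For $B$, the analogous argument is used, but now Poincar\'e is applied on the larger balls $B_{\rho_1}$ and $B_{2\rho_1}$ with averages still over the small ball $B_r$, so the factor $(R/r)^n = (\rho_1/r)^2$ (since $n=2$) appears and accounts for the ratio $\rho_1^6/r^2$ in the target estimate \eqref{1-80}. Explicitly, Proposition \ref{prop:10-4.3} gives $\int_{B_{2\rho_1}}|\widetilde{u}_r|^2 + \rho_1^2 \int_{B_{2\rho_1}}|\nabla \widetilde{u}_r|^2 \leq C (\rho_1^6/r^2)\int_{B_{2\rho_1}}|\nabla^2 u|^2$, and Caccioppoli with $t=2\rho_1$ propagates the same bound (divided by $\rho_1^6$) to $\int_{B_{\rho_1}}|\nabla^3 u|^2$. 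Adding up the four terms of $B$ gives $B \leq C (\rho_1^6/r^2)\int_{B_{2\rho_1}}|\nabla^2 u|^2$.

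Substituting these bounds on $A$ and $B$ into the inequality from the first step yields exactly \eqref{1-80}, after absorbing the constants $C^{\theta_1}$ and $C^{1-\theta_1}$ into $C$. The only genuine care required is bookkeeping: tracking that the Poincar\'e factor $(R/r)^n$ with $n=2$ produces $\rho_1^6/r^2$ rather than $\rho_1^4$, and choosing $s \leq s_2/2$ so that $B_{2\rho_1} \subset B_R$ allows Caccioppoli to be applied; there is no substantial analytic obstacle beyond this.
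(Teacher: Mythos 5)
Your proposal is correct and follows essentially the same route as the paper's proof: subtract an affine function (the paper centers the averages on $B_{2r}$ rather than $B_r$, an immaterial difference), apply Theorem \ref{theo:9-4.3} to the corrected solution, and then control the lower- and higher-order terms on the right by the Poincar\'e inequality of Proposition \ref{prop:10-4.3} and the Caccioppoli inequality of Proposition \ref{prop:11-4.3}, with the factor $(\rho_1/r)^2$ from Poincar\'e producing the $\rho_1^6/r^2$ in \eqref{1-80}. No gaps.
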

\begin{proof}
Let $a \in \R$, $\omega \in \R^2$ to be chosen later on. Since $u$
is a solution to \eqref{1-79}, also $v=u-a-\omega\cdot x$ is a
solution to \eqref{1-79}. By \eqref{1-72} we have
\begin{equation}
    \label{3-80}
    \rho^4 \int_{B_\rho} |\nabla^2 v|^2 \leq K \left ( H_v(r)
    \right )^{\theta_1} \left ( H_v(\rho_1)
    \right )^{1-\theta_1},
\end{equation}
where
\begin{equation}
    \label{10-81}
    K=C \exp \left (
    C \left((\gamma_2^{-1}\rho)^{-\beta}-(\gamma_2
    \frac{\rho_1}{2})^{-\beta}\right)R^{\beta}
    \right )
\end{equation}
and
\begin{equation}
    \label{20-81}
    H_v(t)= \sum_{k=0}^3 t^{2k} \int_{B_t} |\nabla^k v|^2, \quad
    t \in (0,R).
\end{equation}
By Proposition \ref{prop:11-4.3} we have
\begin{equation}
    \label{1-81}
    H_v(r)= C\sum_{k=0}^2 r^{2k} \int_{B_{2r}} |\nabla^k v|^2,
\end{equation}
where $C$ only depends on $M$ and $\gamma$. Now, we choose
\begin{equation}
    \label{30-81}
    a = \frac{1}{|B_{2r}|}\int_{B_{2r}} u, \quad \omega= \frac{1}{|B_{2r}|}\int_{B_{2r}} \nabla
    u.
\end{equation}
By Proposition \ref{prop:10-4.3} and {}from \eqref{1-81} we have
\begin{equation}
    \label{2-81}
    H_v(r) \leq Cr^4 \int_{B_{2r}} |\nabla^2 u|^2,
\end{equation}
where $C$ only depends on $M$ and $\gamma$.

Similarly, by applying Propositions \ref{prop:10-4.3} and
\ref{prop:11-4.3} we obtain
\begin{equation}
    \label{3-81}
    H_v(\rho_1) \leq C \rho_1^4  \left ( \frac{\rho_1}{r}  \right )^2 \int_{B_{2\rho_1}} |\nabla^2 u|^2,
\end{equation}
where $C$ only depends on $\gamma$ and $M$. {}From \eqref{3-80},
\eqref{1-81}, \eqref{2-81}, inequality \eqref{1-80} follows.
\end{proof}

\begin{theo} [Three sphere inequality - third version]
   \label{theo:13-4.3}
   Let $u \in H^4({B}_R)$ be a solution to
   the equation
\begin{equation}
    \label{1-79bis}
    \partial_{ij}^2 (C_{ijkl}(x)\partial_{kl}^2 u)=0,
    \quad \hbox{in } B_R,
\end{equation}
where $\{C_{ijkl}(x)\}_{i,j,k,l=1}^{2}$ is a fourth order tensor
whose entries belong to $C^{1,1}(\overline{B}_R)$. Assume that
\eqref{eq:sym-conditions-C-components},
\eqref{eq:3.bound_quantit}, \eqref{eq:3.convex} and the dichotomy condition are
satisfied in $B_R$.
Let $\gamma_2=5^{-6}\gamma_1^{15}$ and $\beta=
\frac{1}{\gamma_2^2}-1$. There exist positive constants $s$,
$0<s<1$, and $C$, $C\geq 1$, $s$ and $C$ only depending on
$\gamma$, $M$ and on $\delta_1=
\min_{\overline{B}_R} \mathcal{D}$, such that, for every $\rho_1 \in
(0,s R)$ and every $r$, $\rho$ satisfying
$r<\rho<\frac{\rho_1\gamma_2^2}{2}$, the following inequality holds
\begin{multline}
    \label{3sfere_Cauchy}
    \int_{B_\rho} u^2 \leq
    C \exp \left (
    C ((\gamma_2^{-1}\rho)^{-\beta}-(\gamma_2
    \frac{\rho_1}{2})^{-\beta})R^{\beta}
    \right ) \cdot \\
    \cdot \left (
    \int_{B_{r}} u^2
    \right )^{\theta}
    \left (
    \sum_{k=0}^4 \rho_1^{2k} \int_{B_{\rho_1}} |\nabla^k u|^2
    \right )^{1-\theta},
\end{multline}
where $\theta=\frac{\theta_1}{4}$, with $\theta_1$ given by
\eqref{2-72}

\end{theo}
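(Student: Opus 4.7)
The plan is to derive Theorem \ref{theo:13-4.3} as a direct consequence of Theorem \ref{theo:9-4.3} combined with a Gagliardo--Nirenberg interpolation on a ball. Introduce, for $t>0$ and $m\in\{3,4\}$, the normalized Sobolev quantity
$$I^{(m)}_t:=\sum_{k=0}^{m}t^{2k}\int_{B_t}|\nabla^k u|^2.$$
Theorem \ref{theo:9-4.3} applied at the three radii $r<\rho<\rho_1$ yields
$$I^{(3)}_\rho\leq C\,K\,(I^{(3)}_r)^{\theta_1}\,(I^{(3)}_{\rho_1})^{1-\theta_1},$$
where $K$ denotes the same exponential weight that appears in both \eqref{1-72} and \eqref{3sfere_Cauchy}. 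Since the target estimate bounds $\int_{B_\rho}u^2\le I^{(3)}_\rho$, the only remaining task is to replace the inner $H^3$-like quantity $I^{(3)}_r$ by the pure $L^2$ norm $\int_{B_r}u^2$, at the cost of enlarging the outer $I^{(3)}_{\rho_1}$ to $I^{(4)}_{\rho_1}$.

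The key tool is the Gagliardo--Nirenberg interpolation on the ball $B_1$, namely $\|v\|_{H^3(B_1)}\leq C\|v\|_{L^2(B_1)}^{1/4}\|v\|_{H^4(B_1)}^{3/4}$ for $v\in H^4(B_1)$, with an absolute constant $C$. Applying it to the rescaled function $v(y)=u(ry)$ on $B_1$, the dimensional balance built into the definition of $I^{(m)}_r$ produces
$$I^{(3)}_r\leq C\left(\int_{B_r}u^2\right)^{1/4}\left(I^{(4)}_r\right)^{3/4},$$
with the same absolute constant $C$, because the powers of $r$ arising from the change of variables cancel exactly on the two sides. Since $r\le\rho_1$, the monotonicities $I^{(4)}_r\leq I^{(4)}_{\rho_1}$ and $I^{(3)}_{\rho_1}\leq I^{(4)}_{\rho_1}$ are trivial.

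Substituting the interpolation bound into the three sphere inequality and using these monotonicities gives
$$\int_{B_\rho}u^2\leq C\,K\left(\int_{B_r}u^2\right)^{\theta_1/4}\left(I^{(4)}_{\rho_1}\right)^{3\theta_1/4+1-\theta_1}=C\,K\left(\int_{B_r}u^2\right)^{\theta_1/4}\left(I^{(4)}_{\rho_1}\right)^{1-\theta_1/4},$$
which is precisely \eqref{3sfere_Cauchy} with $\theta=\theta_1/4$.

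No serious obstacle is expected. The only nontrivial point is to verify that the Gagliardo--Nirenberg inequality passes cleanly through the rescaling $u\mapsto u(r\,\cdot)$, that is, that the normalized norms $I^{(m)}_r$ are the dimensionally correct ones so that the constant $C$ does not acquire spurious factors of $r$. This is routine; aside from it the proof reduces to bookkeeping of the exponents $\theta_1/4$ and $1-\theta_1/4$.
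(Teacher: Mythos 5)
Your proposal is correct and follows exactly the paper's own route: the authors also derive the third version immediately from \eqref{1-72} together with the interpolation inequality $\|u\|_{H^3(B_r)}\leq C\|u\|_{L^2(B_r)}^{1/4}\|u\|_{H^4(B_r)}^{3/4}$ in the normalized norms, which is precisely your rescaled Gagliardo--Nirenberg step, and the exponent bookkeeping $\theta=\theta_1/4$ matches.
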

\begin{proof}

It follows immediately {}from \eqref{1-72} and by the
interpolation inequality
\begin{equation*}
   \|u\|_{H^3(B_r)}\leq C \|u\|_{L^2(B_r)}^{\frac{1}{4}}\|u\|_{H^4(B_r)}^{\frac{3}{4}},
\end{equation*}
where $C$ is an absolute constant and the norms are normalized
according to the convention made in Section \ref{SecCauchy}.
\end{proof}

\textit{Acknowledgements}. We wish to express our gratitude to
Professor Luis Escauriaza for deep, fruitful and stimulating
discussions on the issues of Carleman estimates.

\end{document}